\pgfplotsset{compat=1.17}
\newtheorem{thm}{Theorem}[section]
\newtheorem*{thm*}{Theorem}
\newtheorem{pop}[thm]{Proposition}
\newtheorem{lem}[thm]{Lemma}
\newtheorem{cor}[thm]{Corollary}
\newtheorem{defi}[thm]{Definition}
\newenvironment{introtheorem}[1]{%
  \manualtheoreminner
}{\endmanualtheoreminner}
\theoremstyle{definition}
\newtheorem{rem}[thm]{Remark}
\newtheorem{ex}[thm]{Example}
\numberwithin{equation}{section}
\theoremstyle{definition} 
\newcommand{\thistheoremnam}{}
\newtheorem*{genericthm*}{\thistheoremnam}
\newenvironment{chapt*}[1]
  {\renewcommand{\thistheoremnam}{#1}%
   \begin{genericthm*}}
  {\end{genericthm*}}
\newcommand{\todo}[1]{$\clubsuit$ {\tt #1} $\clubsuit$}
\newcommand{\LLS}{Lo\-rentz\-ian length space }
\newcommand{\LpLS}{Lo\-rentz\-ian pre-length space }
\newcommand{\LpLSn}{Lo\-rentz\-ian pre-length space}
\newcommand{\LpLSs}{Lo\-rentz\-ian pre-length spaces }
\newcommand{\LpLSsn}{Lo\-rentz\-ian pre-length spaces}
\newcommand{\mb}[1]{\mathbb{#1}}
\newcommand{\widebar}[1]{\overline{#1}}
\DeclareMathOperator{\diam}{diam}
\DeclareMathOperator{\arcosh}{arcosh}
\newcommand{\diamfin}{\ensuremath{\diam_{\mathrm{fin}}}}
\newcommand{\lm}[1]{\mathbb{L}^2(#1)}
\newcommand{\ma}{\ensuremath{\measuredangle}}
\newcommand{\lpls}{(X,d,\ll,\leq,\tau)}
\newcommand{\bx}{\bar{x}}
\newcommand{\by}{\bar{y}}
\newcommand{\bz}{\bar{z}}
\newcommand{\bp}{\bar{p}}
\newcommand{\bq}{\bar{q}}
\newcommand{\N}{\mathbb{N}}
\providecommand\given{} 
\newcommand\SetSymbol[1][]{
   \nonscript\,#1\vert \allowbreak \nonscript\,\mathopen{}}
\DeclarePairedDelimiterX\Set[1]{\lbrace}{\rbrace}%
 { \renewcommand\given{\SetSymbol[\delimsize]} #1 }
 \newcommand{\newfoot}[1]{\footnotemark\ \hspace{-2mm}\footnotetext{#1}}
\newenvironment{acknowledgements}{%
\begin{abstract}
}{%
\end{abstract}
}
\newcommand{\tb}[1]{{\color{purple}{#1}}}
\renewcommand{\labelenumi}{(\roman{enumi})}
\renewcommand\theenumi\labelenumi
\title{ Alexandrov's Patchwork and the Bonnet--Myers Theorem for Lorentzian length spaces}
\date{}
\author{Tobias Beran\thanks{{\tt tobias.beran@univie.ac.at}, Faculty of Mathematics, University of Vienna, Austria.}\ , Lewis Napper\thanks{{\tt lewis.napper@surrey.ac.uk}, Department of Mathematics, University of Surrey, UK.}\ \ and Felix Rott\thanks{{\tt felix.rott@univie.ac.at}, Faculty of Mathematics, University of Vienna, Austria.}}
\begin{document}
\maketitle
\thispagestyle{firstpage}

\begin{abstract}
We present several key results for \LpLSs with global timelike curvature bounds. Most significantly, we construct a Lorentzian analogue to Alexandrov's Patchwork, thus proving that suitably nice \LpLSs with local upper timelike curvature bound also satisfy a corresponding global upper bound. Additionally, for spaces with global lower bound on their timelike curvature, we provide a Bonnet--Myers style result, constraining their finite diameter.
Throughout, we make the natural comparisons to the metric case, concluding with a discussion of potential applications and ongoing work.
\vspace{1em}

\noindent
\emph{Keywords:} Lorentzian length spaces, synthetic curvature bounds, globalization, triangle comparison, metric geometry, Lorentzian geometry
\medskip

\noindent
\emph{MSC2020:} 
53C50, 
53C23, 
53B30, 
51K10 
\end{abstract}
\medskip

\begin{acknowledgements}
We want to thank James Grant and Argam Ohanyan for helpful comments throughout this project. We also want to acknowledge the kind hospitality of the Erwin Schrödinger International Institute for Mathematics and Physics (ESI) during the workshop \emph{Nonregular Spacetime Geometry}, where some of this research was carried out as well as the Fields Institute of the University of Toronto, which kindly supported the authors' attendance at the \emph{Workshop on Mathematical Relativity, Scalar Curvature and Synthetic Lorentzian Geometry} in October 2022. 
Finally, we would like to thank a number of reviewers for their valuable comments on the working version of this paper.

This work was supported by research grant P33594 of the Austrian Science Fund FWF.
\end{acknowledgements}

\newpage

\tableofcontents

\section{Introduction}

By utilising the theory of metric length spaces, the scope of many results in differential geometry can be extended beyond the setting of smooth manifolds. In particular, metric length spaces are a key tool in the abstraction of the fundamental properties of Riemannian manifolds, to structures of lower regularity, see the books by \citet*{BBI01} and \citet*{BH99}. In this so-called synthetic approach, curvature bounds (locally/in the small) are constructed via the comparison properties of geodesic triangles and are used to tame some of the more pathological behaviour of such length spaces. A semi-Riemannian extension of these comparison methods has also been developed by \citet*{AB08} and \citet*{Har82}. 
\bigskip

The properties that arise from supplementing length spaces with global curvature bounds (producing spaces of Alexandrov or CAT($k$) type) have also been significant for the application of metric length spaces to problems in a variety of fields, from dynamical systems to group theory. An illustrative result by \citet*{BGP92}, in the case of curvature bounded below, is the stability of global lower curvature bounds on metric spaces under Gromov--Hausdorff limits (see \citet*{Kap02} for a more detailed discussion). We can see the impact of spaces with global upper curvature bounds by looking to algebraic topology: the fundamental group of any complete metric space with curvature bounded above by zero has no non-trivial finite subgroups, see \citet*[Corollary 9.3.2]{BBI01}. 
\bigskip

As such, a salient question asks under which conditions can curvature bounds, which are imposed locally, be extended to hold in the large? For spaces with curvature bounded above, \citet*{Ale57} demonstrated that, provided we also have unique geodesics which vary continuously with their endpoints, local CAT($k$) spaces are CAT($k$). Extending the work of \citet*{Top59} on Riemannian manifolds, it was shown by \citet*{BGP92} that for curvature bounded below, complete length spaces are sufficient. In fact, a generalization of the Bonnet--Myers Theorem (cf.\ \citet*{BBI01}) follows as a natural corollary of the aforementioned result, bounding the diameter of complete length spaces with local curvature bounded below. A further globalization result, which treats completions of geodesic spaces with curvature bounded below, has been obtained by \citet*{Pet16}. 
\bigskip

Analysis of low regularity Lorentzian geometry has become increasingly pertinent in the study of general relativity and physically relevant space-times with singularities, for example, it is known that the vacuum Einstein equations admit solutions of Sobolev regularity $H^s_{loc}$, provided $s > \tfrac{5}{2}$ and the induced Riemannian metric on spacelike slices is also $H^s_{loc}$, see \citet*{Ren05}. In the case where the Lorentzian metric is $C^1$, it has recently been shown by \citet*{Gra20} that the Hawking and Penrose singularity theorems persist and physically reasonable Lorentzian manifolds with such a metric cannot be causal geodesically complete. For examples of physically relevant models with even lower regularity, we direct the reader to the class of impulsive gravitational waves, which are described by either a continuous or distributional metric and admit unique, continuously differentiable geodesics, see Podolsk{\'y}, S{\"a}mann, Steinbauer, and \u{S}varc \cite{PSSS15, PSSS16}. Alternatively, straight conical cosmic strings possess distributional energy momentum tensors and may be described by totally geodesic, quasi-regular singularities, with the text by \citet*{Vic90} discussing the implications of this low regularity structure on the dynamics of more general cosmic strings. Furthermore, \citet*{Mons23} have recently formulated the Einstein equations in terms of optimal transport and provide an example of bounding the Ricci curvature of FLRW spaces with warping functions in $H^1$. This has been extended to general synthetic Lorentzian spaces by \citet*{CM20}, with \citet*{McC20} providing a similar formulation of the strong energy condition for globally hyperbolic spacetimes.
\bigskip

Hence, the introduction of the \LpLS by \citet*{KS18} as a Lorentzian analogue to the metric space and an extension of the causal spaces of \citet*{KP67}, has led to the rapid development of a synthetic Lorentzian framework, with the vast number of novel results mirroring the growth of the theory of metric length spaces several decades ago. See for example the work of Alexander, Graf, Grant, Kunzinger, S{\"a}mann, and Steinbauer \cite{AGKS19,GKS19,KS21}. However, while the development of metric length spaces was guided by disciplines such as group theory and the study of partial differential equations alongside its purely geometric origin, research into \LpLSs has, for the most part, focused on its apparent necessity in general relativity. This paper, continuing from the work of Beran, Ohanyan, Rott, S{\"a}mann, and Solis \cite{BR22,Rot22,BS22,BORS23}, aims to develop suitable Lorentzian analogues to some of the fundamental results of metric length spaces, in order to facilitate the application of the \LLS framework to a wider range of disciplines. In particular, given their myriad of applications in the metric setting, we focus on the notion of spaces with synthetic curvature bounds and their properties. Curvature bounds on Lorentzian pre-length spaces were first described by \citet*{KS18} via the means of triangle comparison, with alternative approaches using hyperbolic angles being proposed by  \citet*{BS22} and \citet*{BMS22} in recent years. However, these works do not attempt to relate curvature bounds enforced globally to those imposed on neighbourhoods --- this gap in the literature is one which this paper aims to fill, in the case of curvature bounded above.
\bigskip

An outline of the paper is as follows. We begin in Section \ref{sec:prelim} by re-iterating some basic definitions regarding \LpLSsn, $\tau$-length, and the causal ladder. We also introduce the notion of a regular \LpLSn, the technique of triangle comparison, and both local and global timelike curvature bounds. Similar definitions in the context of metric spaces are also provided for convenience and comparison. In Section \ref{sec:globoMetric}, we provide a summary of some globalization results from the metric setting that we wish to mirror with our `Lorentzified' constructions. In particular, we give explicit statements of the so-called Alexandrov's Patchwork, Toponogov's Theorem, and the Bonnet--Myers Theorem. 
The main results of this paper are proven within Section \ref{sec:globoLorentz}, which is split into two parts. The first part concerns globalization of upper timelike curvature bounds via gluing of timelike triangles and culminates in the following theorem: 

\begin{introtheorem}{\ref{thm: Lor AlexPatch}}[Alexandrov's Patchwork Globalization, Lorentzian version]
Let $X$ be a strongly causal, non-timelike locally isolating, and regular \LpLS which has (local) timelike curvature bounded above by $K\in\mb{R}$. Suppose that $X$ satisfies \emph{(i)} and \emph{(ii)} in Definition \ref{TLCB}. Additionally assume that the geodesics between timelike related points with $\tau$-distance less than $D_K$ are unique. Let $G$ be the geodesic map of $X$ restricted to the set
$\Set*{ (x,y,t)\in{\ll}\times[0,1] \given \tau(x,y) < D_K}=\tau^{-1}((0,D_K))\times[0,1] $ and assume that $G$ is continuous. 
Then $X$ also satisfies Definition \ref{TLCB}.\emph{(iii)}, in particular $X$ has global curvature bounded above by $K$. 
\end{introtheorem}

As in the metric setting, several of the assumptions in the above theorem are not only necessary, but also sufficient. In particular, we show that every strongly causal and regular Lorentzian pre-length space with global timelike curvature bounded above by $K\in \mb{R}$ has unique geodesics up to length $D_K$.

\begin{introtheorem}{\ref{thm: Lor unique geodesics}}[Unique geodesics in upper curvature bounds]
Let $X$ be a strongly causal and regular Lorentzian pre-length space with timelike curvature bounded above by $K \in \mathbb{R}$. Let $x\ll y$ be in a comparison neighbourhood $U \subseteq X$ and suppose $\tau(x,y)<D_K$. Then there exists a unique geodesic from $x$ to $y$ contained in $U$. In particular, if $X$ satisfies a global upper curvature bound, geodesics between timelike related points in $X$ with $\tau$-distance less than $D_K$ are unique. 
\end{introtheorem}

The second part concerns a result akin to the Bonnet--Myers Theorem, bounding the finite timelike diameter of \LpLSsn, using global lower timelike curvature bounds. More precisely:

\begin{introtheorem}{\ref{thm: lor meyers}}[Bound on the finite diameter]
Let $X$ be a strongly causal, locally causally closed, regular, and geodesic Lorentzian pre-length space which has global curvature bounded below by $K<0$. Assume that, for each pair of points $x\ll z$ in $X$, there exists $y \in X$ such that $\Delta(x,y,z)$ is a non-degenerate timelike triangle. Then $\diam_{\mathrm{fin}}(X)\leq D_K$.
\end{introtheorem}

We conclude the paper with a discussion of ongoing research into the globalization of timelike curvature bounded below, as well as highlighting a number of potential direct applications of Lorentzian pre-length spaces and global curvature bounds, including to the theory of causal sets.

\section{Preliminaries}\label{sec:prelim}

In this section we collect basic results from the theory of Lorentzian length spaces that will be of use in this article. For more details, we refer the interested reader to \citet*{KS18}. We also recall the corresponding elementary concepts from metric geometry, as a showcase of the tools used in the globalization of metric curvature bounds. For details regarding their precise application, see \citet*{BH99, BBI01}. 

\subsection{Introduction to Lorentzian pre-length spaces}

Let us begin by summarising the fundamentals of the Lorentzian length space framework, pioneered by \citet*{KS18}. In particular, we present rungs from the causal ladder which will be necessary in later proofs, in addition to describing the use of triangle comparison to test for curvature bounds. See \citet*{ACS20,Rot22} for discussions on the causal ladder as a whole. First, let us define a Lorentzian pre-length space:

\begin{defi}[Lorentzian pre-length space]
Let $(X,d)$ be a metric space, $\ll, \leq$ two relations on $X$, and $\tau:X \times X \to [0,\infty]$ a function. The quintuple $\lpls$ is then called a \emph{\LpLS} if it satisfies the following:
\begin{itemize}
\item[(i)] $(X,\ll,\leq)$ is a \emph{causal space}, i.e., $\leq$ is a reflexive and transitive relation and $\ll$ is a transitive relation contained in $\leq$.
\item[(ii)] $\tau$ is lower semi-continuous with respect to $d$.
\item[(iii)] $\tau(x,z) \geq \tau(x,y) + \tau(y,z)$ for $x \leq y \leq z$ and $\tau(x,y) >0 \iff x \ll y$.
\end{itemize}
In this case, $\tau$ is called the \emph{time separation function}, with $\ll$ and $\leq$ referred to as the \emph{timelike} and \emph{causal relations}, respectively. All of these concepts are motivated by the corresponding notions in spacetimes.
\end{defi}

A \LpLS $\lpls$ will usually be denoted simply by $X$, where the latter is clear. When referring to causal (or timelike) pasts and futures, we shall use the standard notation, e.g., $I^+(x)\coloneqq \Set*{ y \in X \given x \ll y }$ and $J^-(x)\coloneqq \Set*{ y\in X \given y\leq x }$. In particular, for diamonds, we shall use the notation $J(x,z):=J^+(x) \cap J^-(z)= \Set*{ y \in X \given x\leq y \leq z }$.

\begin{defi}[Causal and timelike curves]
Let $X$ be a \LpLSn. A locally Lipschitz curve $\gamma:[a,b] \to X$ is called \emph{future-directed causal} (respectively \emph{timelike}), if $\gamma(s) \leq \gamma(t)$ (respectively $\gamma(s) \ll \gamma(t)$) for all $s<t$ in $[a,b]$. A \emph{past-directed} curve is defined analogously with the relations in $X$ reversed. To make our terminology less cumbersome and avoid repeated reference to time orientation, we assume causal curves are future-directed, unless it is explicitly stated otherwise.
\end{defi}

\begin{defi}[$\tau$-length and geodesics]
Let $\gamma:[a,b] \to X$ be a causal curve from $x$ to $y$ in a \LpLS $X$. 
\begin{itemize}
\item[(i)] We define its \emph{$\tau$-length} as
\begin{equation}
L_{\tau}(\gamma):=\inf \Set*{ \sum_{i=0}^{n-1} \tau(\gamma(t_i),\gamma(t_{i+1})) \given a= t_0 < t_1 < \ldots < t_n = b, n \in \mathbb{N} }.
\end{equation}

\item[(ii)] By definition we always have $L_{\tau}(\gamma) \leq \tau(x,y)$. In the case of equality, we call $\gamma$ a \emph{distance-realizer} or \emph{geodesic}. 
 
\item[(iii)] $X$ is called \emph{geodesic} if, for each pair of causally related points, there exists a geodesic connecting them. $X$ is called \emph{uniquely geodesic} if it is geodesic and the geodesic between each pair of points is unique.
If a geodesic $\gamma$ between $x \ll y$ is unique, or if it is not unique and a choice of geodesic has either been made or is inconsequential, then we denote the geodesic by $\gamma_{xy}$. Unless otherwise mentioned, we shall assume $\gamma_{xy}$ is parameterized by $[0,1]$ and has constant speed, i.e., $\tau(\gamma(s), \gamma(t))= \tau(x,y)|t-s|$ for all $s<t \in [0,1]$. 
\end{itemize}
\end{defi} 

The additional assumptions given above, that geodesics are parameterized by $[0,1]$ and have constant speed, pose no technical issues when dealing with geodesics (between timelike related points) which are timelike. However, in general, a geodesic between timelike related points may contain a null piece, see for example the causal funnel in \citet*[Example 3.19]{KS18}. In formulating the main results of this paper, we will consider Lorentzian pre-length spaces which are well-behaved, in the sense that geodesics between timelike related points are always timelike. In order to encode this as a property of the space, we now introduce the notion of regularity.

\begin{defi}[Regular Lorentzian pre-length space]
\label{def: regular}
A \LpLS $X$ is called \emph{regular} if for all $x,y \in X$ such that $x \ll y$ all geodesics connecting $x$ and $y$ are timelike.
\end{defi}

It is worth observing that under strong causality, the notions of being regularly localizable, cf.\ \citet*[Definition 3.16]{KS18}, is equivalent to being regular (in the sense of Definition \ref{def: regular}) and localizable, see  the recent work of \citet*[Lemma 3.6]{BKR23}.  

Before providing a definition of timelike curvature bounds, it is now necessary to introduce the notion of triangle comparison:

\begin{defi}[Model spaces and triangle comparison]
\label{def: tr}
Let $X$ be a \LpLSn. We define the following:
\begin{itemize}
\item[(i)] A \emph{timelike triangle} $\Delta(x,y,z)$ in $X$ is a collection of three timelike related points $x \ll y \ll z$ and three pairwise connecting geodesics $\gamma_{xy},\gamma_{yz}$ and $\gamma_{xz}$ between them. To indicate a point $p$ lies on the triangle, we write $p \in \Delta(x,y,z)$, with $p \in \gamma_{xy}$ used to specify which side.

\item[(ii)] By $\lm{K}$ we denote the Lorentzian model space of constant sectional curvature $K$. 
That is, $\lm{0}$ is the Minkowski plane and $\lm{K}$, for $K>0$ and $K<0$, is an appropriately scaled version of 2-dimensional de Sitter or anti-de Sitter spacetime, respectively.

\item[(iii)] Let $\Delta(x,y,z)$ be a timelike triangle in $X$. We call a timelike triangle $\Delta(\bx,\by,\bz)$ in $\lm{K}$ whose sides have the same sidelengths as $\Delta(x,y,z)$ a \emph{comparison triangle} for $\Delta(x,y,z)$. 

\item[(iv)] Let $p\in\gamma_{xy}$ (analogously for $\gamma_{xz}$ and $\gamma_{yz}$) be a point on some side of the triangle $\Delta(x,y,z)$ in $X$ and let $\Delta(\bx,\by,\bz)$ be a comparison triangle for $\Delta(x,y,z)$. The \emph{comparison point} for $p$, in $\Delta(\bx,\by,\bz)$, is the (unique) point $\bp\in\gamma_{\bx\by}$, whose $\tau$-distance to the endpoints of $\gamma_{\bx\by}$ is the same as the $\tau$-distance from $p$ to the respective endpoints of $\gamma_{xy}$. 

\item[(v)] We call a timelike triangle $\Delta(x,y,z)$ \emph{non-degenerate} if the inequality $\tau(x,z) > \tau(x,y) + \tau(y,z)$ holds. In this case, any associated comparison triangle is non-degenerate in the visual sense, i.e. not just a geodesic segment.
\end{itemize}
\end{defi}

Before finally providing the definition of timelike curvature bounds, we raise the following technical detail. 
We feel that in general, but especially in the context of this paper, the original definition of timelike curvature bounds given by \citet*{KS18} should be 
modified slightly. More precisely, the defining properties of a comparison neighbourhood (see \citet*[Definition 4.7]{KS18}) should only hold wherever $\tau$ is ``not too large'' for the comparison space. This is mainly as a result of exotic behaviour exhibited by anti de-Sitter space (AdS), the model space for constant negative timelike curvature. In AdS, geodesics (in the smooth sense) stop being maximizing when they exceed length $\pi$ (when $K=-1$, otherwise this bound is appropriately scaled). 
\bigskip

Visually, this can be explained as follows: place two points $x$ and $y$ on the ``equator'' of AdS such that they are not antipodal and connect them via the longer geodesic (recall that geodesics in AdS arise by intersecting the space with a plane through 0 and the two endpoints). 
Then we can create curves of increasing lengths by going out to infinity, say to the right of the equator. 
In particular, there is no longest curve joining $x$ and $y$ and $\tau(x,y)=\infty$. 
This is related to AdS not being globally hyperbolic; indeed, the maximal globally hyperbolic subset of AdS has (ordinary) diameter $\pi$.
Due to the dependence on global hyperbolicity, this pathological behaviour is clearly exclusive to the Lorentzian case, however there is some similarity with the Maximal Diameter Theorem in Riemannian geometry, see \citet*[Theorem 6.5]{CE75}. An analogue to this result in the setting of Lorentzian manifolds was provided by \citet*[Theorem 3.5]{Har82}.
More precisely, in the metric model spaces $M_k$ with $k>0$, there exist infinitely many geodesics between points which are exactly a distance $\pi / \sqrt{k}$ apart (compare the antipodal points on AdS with those on the sphere). 
\bigskip

In order to provide our updated definition, we have to introduce the so-called finite diameter of a space. This is essentially the diameter, i.e., the supremum of all values of $\tau$, but we explicitly exclude $\infty$ as a value because of the nature of AdS. Note that, despite the nomenclature, the finite diameter of a \LpLS need not be finite.
\begin{defi}[Finite diameter]
\label{def: fin diam}
Let $X$ be a Lorentzian pre-length space. 
\begin{itemize}
    \item[(i)] The \emph{finite diameter} of $X$ is
    \begin{equation}
    \diamfin(X)=\sup(\Set*{\tau(x,y) \given x\ll y}\setminus\{\infty\}),    
    \end{equation}
    i.e., the supremum of all values $\tau$ takes except $\infty$. 
    \item[(ii)] By \emph{$D_K$} we denote the finite diameter of $\lm{K}$. In particular, 
    \begin{equation}
    D_K = \diamfin(\lm{K}) = 
        \begin{cases}
        \infty, \text{ if $K \geq 0$}, \\
        \frac{\pi}{\sqrt{-K}}, \text{ if $K<0$}.
        \end{cases}
    \end{equation}
\end{itemize}
\end{defi}

\begin{rem}[Size Bounds]
Let $K \in \mathbb{R}$. A triple $(a,b,c) \in \mathbb{R}^3_+$ with $c \geq a + b$ are said to satisfy \emph{size-bounds} for $\lm{K}$ if they may be realized as the side lengths of a timelike triangle in $\lm{K}$. 
In particular, the side lengths of a triangle $\Delta(x,y,z)$ in $X$ satisfy size-bounds precisely if $\tau(x,z)<D_K$, cf.\ \citet*[Lemma 4.6]{KS18}, in which case a comparison triangle $\Delta(\bx,\by,\bz)$ exists in $\lm{K}$. For the remainder of this paper, we assume timelike triangles satisfy the required size-bounds for the existence of the comparison triangles used.
\end{rem}

Let us now make our point concrete: \emph{all} properties of a comparison neighbourhood should respect the appropriate range of values of $\tau$. In particular, we do not care whether $\tau$ is continuous near points separated by a distance which cannot be realized in the model space, and we should also not require such points to possess a joining geodesic. 
On the one hand, this refined definition is somehow more in alignment with its metric counterpart. For example, in the definition of CAT($k$) spaces, cf.\ \citet*[Definition II.1.1]{BH99}, the authors explicitly only require that there exists a geodesic between points which are less than the diameter of the corresponding model space apart. 
On the other hand, curvature bounds should morally not be concerned with behaviour which cannot be realized in the model space. 

\begin{defi}[Timelike curvature bounds]
\label{TLCB}
Let $X$ be a \LpLSn. An open subset $U$ is called a \emph{timelike $(\geq K)$-comparison neighbourhood} (or \emph{timelike $(\leq K)$-comparison neighbourhood}) if:
\begin{enumerate}
\item $\tau$ is continuous on $(U\times U) \cap \tau^{-1}([0,D_K))$ and $(U\times U) \cap \tau^{-1}([0,D_K))$ is open. 

\item For all $x,y \in U$ with $x \ll y$ and $\tau(x,y) < D_K$ there exists a geodesic connecting them which is contained entirely in $U$.
\item \label{TLCB.item3} Let $\Delta (x,y,z)$ be a timelike triangle in $U$,
with $p,q$ two points on the sides of $\Delta (x,y,z)$. Let 
$\bar\Delta(\bx, \by, \bz)$ be a comparison triangle in $\lm{K}$ for $\Delta (x,y,z)$ and $\bp,\bq$ comparison points for $p$ and $q$, respectively. Then
\begin{equation}
\label{eq: triangle comparison}
\tau(p,q) \leq \tau(\bp,\bq) \quad \text{ (or } \tau(p,q) \geq \tau(\bp, \bq) \text{)}.
\end{equation}
\end{enumerate}

We say $X$ has \emph{timelike curvature bounded below by $K$} if it is covered by timelike $(\geq K)$-comparison neighbourhoods. Likewise, $X$ has \emph{timelike curvature bounded above by $K$} if it is covered by timelike $(\leq K)$-comparison neighbourhoods.

We say $X$ has \emph{global timelike curvature bounded below by $K$} if $X$ itself is a $(\geq K)$-comparison neighbourhood. Similarly, $X$ has \emph{global timelike curvature bounded above by $K$} if $X$ is a $(\leq K)$-comparison neighbourhood.
\end{defi}
 
 Note that within a $(\geq K)$ comparison neighbourhood, $p \ll q$ implies $\bp \ll \bq$, and within a $(\leq K)$ comparison neighbourhood, $\bp \ll \bq$ implies $p \ll q$. 

\begin{rem}[Global curvature bound of AdS]
     Note that with the above definition, $\lm{-1}$ satisfies global curvature bounds both above and below. In contrast, $\lm{-1}$ does not satisfy a global curvature bound with respect to the original definition of \citet*{KS18}, since it does not satisfy their conditions for a comparison neighbourhood: $\tau$ is neither finite nor continuous, and for $x,y$ with $\tau(x,y) > \pi$, there is no geodesic joining them. 
\end{rem}

When treating local curvature bounds, we consider spaces covered by comparison neighbourhoods. In establishing a globalization theorem, a precise description of the aforementioned covering will be useful. To this end, one step of the causal ladder, namely strong causality, is crucial:

\begin{defi}[Strong causality]
A \LpLS $X$ is called \emph{strongly causal} if $\mathcal{I}:= \Set*{ I(x,y) \given x,y \in X}$ is a subbasis for the topology induced by $d$.
\end{defi}

It turns out, however, that a finite intersection of diamonds inside an arbitrary neighbourhood is not sufficient for subdividing arbitrary timelike triangles into sub-triangles as needed in the eventual Lorentzian version of Alexandrov's Patchwork; we will actually require the existence of a single timelike diamond inside any neighbourhood, i.e., $\mathcal{I}$ must be a basis for the topology. In order to show that such a basis exists, we now introduce the notions of non-timelike locally isolating spaces and approximating spaces in the sense of \citet*[Definition 3.2.3]{BR22} and \citet*[Definition 2.17]{BGH21} respectively:

\begin{defi}[Non-timelike local isolating spaces] 
A subset $A$ of a Lorentzian pre-length space $X$ is said to be \emph{non-future locally isolating} if for all $a \in A$ with $I^+(a) \neq \emptyset$ and for all neighbourhoods $U_a \subseteq A$ of $a$ there exists $b_+ \in U_a$ such that $a \ll b_+$. Similarly, we define a non-past locally isolating set. We say $A$ is non-timelike locally isolating if it satisfies both properties.
\end{defi}

\begin{defi}[Approximating spaces]
A \LpLS $X$ is said to be \emph{(future-/past-) approximating} if for every point $p \in X$ there exists a sequence $(p^{\pm}_n)_n$ in $I^{\pm}(p)$ such that $p^{\pm}_n \to p$ as $n \to \infty$. 
\end{defi}

Note that if every point in $X$ has non-empty timelike future and past, e.g., if $X$ is strongly causal, $X$ (viewed as a subset of itself) is non-timelike local isolating if and only if $X$ is approximating. For the remainder of the paper, we shall work in such a setting, hence we shall use the former terminology when referring to either definition.

\begin{pop}[Diamonds form basis]
\label{pop: tl dia basis}
Let $X$ be a strongly causal and non-timelike locally isolating \LpLSn. Then $\mathcal{I}$ forms a basis for the topology. In particular, given any neighbourhood of any point, we can construct a timelike diamond containing the point, such that the diamond and its governing points are also contained in the neighbourhood.
\end{pop}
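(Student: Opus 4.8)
The plan is to combine strong causality, which tells us $\mathcal I$ is a subbasis, with the non-timelike local isolating property, which will let us ``upgrade'' a finite intersection of diamonds to a single diamond. Let $p \in X$ and let $W$ be an arbitrary neighbourhood of $p$. By strong causality, there exist finitely many diamonds $I(a_1,b_1), \dots, I(a_n, b_n)$ with $p \in \bigcap_{i=1}^n I(a_i,b_i) \subseteq W$. So I have points $a_i \ll p \ll b_i$ for each $i$, and the goal is to find a single pair $a \ll p \ll b$ with $I(a,b) \subseteq \bigcap_i I(a_i,b_i)$, and moreover with $a,b \in W$.

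The key step is the construction of $a$ and $b$. Since $p$ has non-empty timelike past and future (strong causality forces this, or we invoke the approximating property), and since $X$ is non-timelike locally isolating, I can find points timelike related to $p$ that are arbitrarily close to $p$, hence inside the open set $\bigcap_i I(a_i,b_i)$. Concretely: the set $\bigcap_i I(a_i,b_i)$ is an open neighbourhood of $p$, so the non-past locally isolating property applied to $p$ in this neighbourhood yields some $a$ with $a \ll p$ and $a \in \bigcap_i I(a_i,b_i)$; symmetrically the non-future locally isolating property yields $b$ with $p \ll b$ and $b \in \bigcap_i I(a_i,b_i)$. Then I claim $I(a,b) \subseteq \bigcap_i I(a_i,b_i)$: if $q \in I(a,b)$ then $a \ll q \ll b$; since $a \in I(a_i,b_i)$ we have $a_i \ll a$, and transitivity of $\ll$ gives $a_i \ll a \ll q$, i.e.\ $a_i \ll q$; similarly $q \ll b \ll b_i$ gives $q \ll b_i$; hence $q \in I(a_i,b_i)$ for every $i$. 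Thus $p \in I(a,b) \subseteq \bigcap_i I(a_i,b_i) \subseteq W$, and since $a, b \in \bigcap_i I(a_i,b_i) \subseteq W$, the governing points lie in $W$ as well. Since $\mathcal I$ consists of open sets (diamonds $I(x,y)$ are open in a strongly causal space, or this is part of the standing hypotheses) and every point has such a diamond inside every neighbourhood, $\mathcal I$ is a basis.

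The main obstacle I anticipate is purely bookkeeping: making sure that when I shrink to the open set $\bigcap_i I(a_i, b_i)$ and apply the local isolating property, I genuinely land \emph{inside} that intersection (so that the chosen $a$, $b$ are themselves governing points contained in $W$), and that the transitivity argument closes up cleanly for all $i$ simultaneously. One should double-check the degenerate case $n = 0$ (where $\bigcap_i I(a_i,b_i)$ should be read as all of $X$, or one simply takes $W$ itself as the neighbourhood to apply local isolation in), and the implicit fact that $I^{\pm}(p) \neq \emptyset$, which follows because $p$ lies in at least one diamond $I(a_i,b_i)$ — unless $n=0$, in which case strong causality still gives a diamond neighbourhood of $p$ to start from. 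No serious analytic input is needed; the whole argument is a transitivity-and-openness manipulation once the right neighbourhood is fed into the non-timelike local isolating hypothesis.
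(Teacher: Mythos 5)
Your proof is correct and is the standard argument: strong causality produces a finite intersection of diamonds around $p$ inside $W$, and the non-timelike local isolating property applied within that intersection supplies timelike-related points $a \ll p \ll b$ from which $I(a,b)$ is built by transitivity of $\ll$. The paper does not reproduce a proof (it cites an external reference), but your route matches what the cited lemma does. Two small remarks on the bookkeeping you flagged: the openness of diamonds comes from the lower semi-continuity of $\tau$ (part of the definition of a Lorentzian pre-length space), not from strong causality per se; and the case $n=0$ can in fact be excluded outright, because if $p$ lay in no diamond then in a metric space with more than one point the topology generated by $\mathcal{I}$ could not agree with the metric topology at $p$, contradicting strong causality — this also gives $I^{\pm}(p) \neq \emptyset$ directly, which the paper itself notes in the discussion preceding the proposition.
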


\begin{proof}
See \citet*[Lemma 3.5]{Rot22}.
\end{proof}

Conveniently, the previous proposition also proves to be the perfect tool for highlighting the relationship between comparison neighbourhoods in the sense of \citet*[Definition 4.7]{KS18} and the modification we propose in Definition \ref{TLCB}. Indeed, under assumptions of chronology,\newfoot{Recall that a \LpLS $X$ is called \emph{chronological} if $\ll$ is irreflexive. This is the same as requiring $\tau(x,x)=0$ for all $x\in X$.} strong causality, and non-timelike local isolation (as in AdS, for example), the two definitions may be used interchangeably and one may construct comparison neighbourhoods in the sense of \citet*{KS18} via the following lemma: 

\begin{lem}[Automatic size bounds]
\label{lem: autoSizeBounds}
Let $X$ be a chronological, strongly causal, and non-timelike locally isolating \LpLS which has timelike curvature bounded below (above) by some $K\in\mb{R}$. Then $X$ is covered by timelike $(\geq K)$-comparison (resp.\ $(\leq K)$-comparison) neighbourhoods $U$ where $\tau|_{U\times U}<D_K$. In particular, these $U$ are curvature comparison neighbourhoods in the sense of \citet*[Definition 4.7]{KS18} and all timelike triangles contained in such a $U$ are realizable.

Moreover, all open $(\geq K)$-comparison (resp. $(\leq K)$-comparison) neighbourhoods in the sense of \citet*[Definition 4.7]{KS18} are $(\geq K)$-comparison (resp. $(\leq K)$-comparison) neighbourhoods in the sense of Definition \ref{TLCB}. In particular, a chronological, strongly causal, and non-timelike locally isolating \LpLS has (local) curvature bounds in the sense of Definition \ref{TLCB} if and only if it has curvature bounds in the sense of \citet*[Definition 4.7]{KS18}.
\end{lem}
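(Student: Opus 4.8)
The plan is to prove the three assertions of Lemma \ref{lem: autoSizeBounds} in order, leaning on Proposition \ref{pop: tl dia basis} for the construction of small timelike diamonds.

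\textbf{Step 1: shrinking comparison neighbourhoods so that $\tau < D_K$.} Fix a point $p \in X$ and a timelike $(\geq K)$- (resp.\ $(\leq K)$-) comparison neighbourhood $W$ containing $p$, which exists by hypothesis. If $K \geq 0$ then $D_K = \infty$ and there is nothing to shrink, so assume $K < 0$ and $D_K = \pi/\sqrt{-K} < \infty$. First I would pass to the set $W' := W \cap \tau^{-1}([0,D_K))$-``thickened'' appropriately: by property (i) of Definition \ref{TLCB}, $(W\times W)\cap\tau^{-1}([0,D_K))$ is open, and $\tau$ is continuous there, so the diagonal point $(p,p)$ (where $\tau = 0 < D_K$) has a product neighbourhood $V_p \times V_p \subseteq (W\times W)\cap\tau^{-1}([0,D_K))$ on which $\tau < D_K$. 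Then I would invoke Proposition \ref{pop: tl dia basis} to find a timelike diamond $U := I(a,b)$ with $p \in U$ and $U, a, b \in V_p$. On such a $U$ we have $\tau|_{U\times U} \leq \tau(a,b) < D_K$ (using that $a,b \in V_p$ and push-up/the reverse triangle inequality to bound $\tau$ on the diamond by $\tau(a,b)$), hence in particular $\tau|_{U\times U}$ is finite and continuous, and every timelike triangle in $U$ has all side lengths $< D_K$, so is realizable by Definition \ref{def: fin diam}(ii). The comparison property (iii) restricted to $U$ is inherited from $W$ since $U \subseteq W$ and every triangle in $U$ already satisfies the size bound, so no ``$\tau < D_K$'' caveat is lost. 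Thus $U$ is a comparison neighbourhood in the sense of Definition \ref{TLCB} \emph{and} in the sense of \cite[Definition 4.7]{KS18}, because on $U$ the two definitions impose identical requirements (the range restriction $\tau^{-1}([0,D_K))$ is vacuous there). These $U$ cover $X$ as $p$ ranges over $X$.

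\textbf{Step 2: old comparison neighbourhoods are new comparison neighbourhoods.} Let $U$ be a curvature comparison neighbourhood in the sense of \cite[Definition 4.7]{KS18}; I must check the three conditions of Definition \ref{TLCB}. For (i): the old definition requires $\tau$ continuous on all of $U\times U$, which is a priori stronger than continuity on $(U\times U)\cap\tau^{-1}([0,D_K))$; the only subtlety is that the latter set be open, which follows because $\tau^{-1}([0,D_K))$ is open when $D_K \leq \infty$ --- for $K\geq 0$ it is everything, and for $K<0$ it equals $\tau^{-1}([0,D_K))$, which is open by continuity of $\tau$ on $U\times U$ applied to the relatively open ray $[0,D_K)$ — so the intersection with the open set $U\times U$ is open. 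For (ii): the old definition gives a joining geodesic in $U$ for every timelike related pair in $U$ (with the old size-bound phrasing), which in particular covers the pairs with $\tau(x,y) < D_K$ demanded by the new definition. For (iii): every triangle with $\tau(x,z)<D_K$ is realizable and the old comparison inequality applies verbatim. Hence $U$ is a $(\leq K)$- (resp.\ $(\geq K)$-) comparison neighbourhood in the new sense.

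\textbf{Step 3: equivalence of the two notions of local curvature bounds.} Combining Steps 1 and 2: if $X$ has local bounds in the sense of \cite{KS18} then its old comparison neighbourhoods are new ones (Step 2), so it has local bounds in the new sense; conversely if $X$ has local bounds in the new sense, then under strong causality and non-timelike local isolation Step 1 produces a cover by neighbourhoods that are simultaneously old and new comparison neighbourhoods, so it has local bounds in the old sense. I expect the main obstacle to be Step 1, specifically verifying cleanly that on the constructed diamond $U = I(a,b) \subseteq V_p$ one genuinely has $\tau|_{U\times U} < D_K$: this needs the estimate $\tau(x,y) \leq \tau(a,b)$ for $a \leq x \leq y \leq b$, which is immediate from the reverse triangle inequality $\tau(a,b) \geq \tau(a,x) + \tau(x,y) + \tau(y,b) \geq \tau(x,y)$ in Definition (iii), together with the care that the diamond's governing points $a,b$ themselves lie in the region $V_p$ where $\tau < D_K$ — hence the explicit requirement in Proposition \ref{pop: tl dia basis} that the diamond \emph{and} its governing points sit inside the prescribed neighbourhood. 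Everything else is bookkeeping about which inclusions of conditions hold once the size bound $\tau < D_K$ is in force.
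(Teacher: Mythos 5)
Your proposal follows essentially the same route as the paper: shrink a comparison neighbourhood using openness of $(U\times U)\cap\tau^{-1}([0,D_K))$ at the diagonal, pass to a timelike diamond $I(a,b)$ via Proposition \ref{pop: tl dia basis} with governing points inside that small set, use the reverse triangle inequality plus causal convexity to bound $\tau$ on the diamond by $\tau(a,b)<D_K$, and observe that on such a diamond the old and new definitions coincide. Your Steps 2 and 3 likewise mirror the short closing argument in the paper for the ``Moreover'' direction.

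One small slip: in Step 1 you say that for $K\geq 0$ ``there is nothing to shrink'', but $D_K=\infty$ does not make the bound $\tau|_{U\times U}<D_K$ automatic — $\tau$ can still take the value $\infty$, and the old definition of a comparison neighbourhood requires $\tau$ to be \emph{finite} on $U\times U$. The same shrinking construction you give for $K<0$ is in fact what is needed for $K\geq 0$ as well, since $\tau^{-1}([0,\infty))$ is exactly the set where $\tau$ is finite; it is only ``everything'' once one has already restricted to an old-style comparison neighbourhood. The paper handles all $K\in\mathbb{R}$ uniformly for this reason. Your argument works once this is noted, since nothing in your construction actually depends on $K<0$.
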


\begin{proof}
Let $x\in X$ and $\tilde{U}$ be a $(\geq K)$-comparison (resp.\ $(\leq K)$-comparison) neighbourhood of $x$. We have that $\tau(x,x)=0$, since $X$ is chronological, hence $(\tilde{U}\times\tilde{U})\cap\tau^{-1}([0,D_K))$ is open and contains $(x,x)$. Consequently, we find a small neighbourhood $V$ of $x$ such that $V\times V\subseteq (\tilde{U}\times\tilde{U})\cap\tau^{-1}([0,D_K))$. By Proposition \ref{pop: tl dia basis}, we find $x_-,x_+ \in V$ such that  $x_-\ll x\ll x_+$ and $x\in I(x_-,x_+)\subset V$ and set $U=I(x_-,x_+)$. It follows that $\tau(x_-,x_+)<D_k$, hence $\tau|_{U\times U}<D_k$. 

We now verify that $U$ is a $(\geq K)$-comparison (resp.\ $(\leq K)$-comparison) neighbourhood in the sense of \citet*[Definition 4.7]{KS18}: Clearly $\tau$ is finite and continuous on $U\times U$. Furthermore, by causal convexity\newfoot{Recall that a subset $A$ of a Lorentzian pre-length space $X$ is called \emph{causally convex} if for all $p,q \in A$ it holds that $J(p,q) \subseteq A$. Causal and timelike diamonds are among the most prominent examples of causally convex sets.} of timelike diamonds, geodesics between points in $U$ remain in $U$. Finally, $U$ inherits property \emph{(iii)} of Definition \ref{TLCB} from the comparison neighbourhood $\tilde U$. 

Next, we show that all open $(\geq K)$-comparison (resp.\ $(\leq K)$-comparison) neighbourhoods in the sense of \citet*[Definition 4.7]{KS18} are $(\geq K)$-comparison (resp.\ $(\leq K)$-comparison) neighbourhoods in the sense of Definition \ref{TLCB}. Properties \emph{(ii)} and \emph{(iii)} are automatic, so it remains to check property \emph{(i)}. As $\tau$ is continuous on $U\times U$, it is also continuous on the subset $(U\times U)\cap \tau^{-1}([0,D_K))$. Furthermore, $[0,D_K)$ is open in $[0,\infty]$, so it follows from continuity of $\tau$ on $U\times U$ that $(U\times U)\cap \tau^{-1}([0,D_K))$ is open in $U\times U$ and therefore in $X\times X$. 
\end{proof} 

In metric geometry, there are several reformulations of curvature bounds expressed via classical triangle comparison, using angles, for example. Alternative formulations of timelike curvature bounds have also been derived (see \citet*{BS22,BMS22}) and several of these characterizations will prove useful in our context. Before we state these explicitly, let us introduce some more terminology:

\begin{defi}[$K$-comparison angles and sign]
Let $X$ be a \LpLSn, $K \in \mb{R}$, $\Delta(x,y,z)$ a timelike triangle in $X$, and $\Delta(\bx,\by,\bz)$ a comparison triangle in $\lm{K}$ for $\Delta(x,y,z)$.

\begin{itemize}
    \item[(i)] The \emph{$K$-comparison angle} at $x$ is defined as the ordinary hyperbolic angle at $\bx$ between $\by$ and $\bz$:
\begin{equation}
    \tilde{\ma}_x^{K}(y,z):=\ma_{\bx}^{\lm{K}}(\by,\bz)=\arcosh(|\langle \gamma_{\bx \by}'(0), \gamma_{\bx \bz}'(0) \rangle|), 
\end{equation}
where we assume the mentioned geodesics to be unit speed parameterized.
\item[(ii)] The \emph{sign} $\sigma$ of a $K$-comparison angle is the sign of the corresponding inner product (in the $-,+,\cdots,+$ convention). That is, in this notation, the sign is $-1$ if the angle is measured at $x$ or $z$ and $1$ if the angle is measured at $y$.
\item[(iii)] The \emph{signed $K$-comparison angle} is defined as $\tilde{\ma}_x^{K,S}(y,z):=\sigma \tilde{\ma}_x^{K}(y,z)$.
\end{itemize}
\end{defi}

\begin{defi}[Angles and hinges]
Let $X$ be a \LpLS and let $\alpha$ and $\beta$ be two timelike curves of arbitrary time orientation emanating at $\alpha(0)=\beta(0) \eqqcolon x$. 
\begin{itemize}
    \item[(i)] The \emph{angle} between $\alpha$ and $\beta$, if it exists, is defined as 
    \begin{equation}
    \label{eq: angle}
        \ma_x(\alpha,\beta) \coloneqq \lim_{s,t \to 0}\tilde{\ma}_x^{0}(\alpha(s),\beta(t)), 
    \end{equation}
    where the limit only considers values of $s$ and $t$ for which the triple $(x, \alpha(s),\beta(t)) $ (or some permutation thereof) forms a timelike triangle.
    \item[(ii)] The \emph{sign} $\sigma$ of an angle is $-1$ if $\alpha$ and $\beta$ have the same time orientation and $1$ otherwise. The \emph{signed angle} is defined as $\ma_x^S(\alpha,\beta) \coloneqq \sigma \ma_x(\alpha,\beta)$.
\item[(iii)] An angle at a point $x\in X$ and the associated geodesics are called a \emph{hinge}, which we will denote by $(\alpha, \beta)$.
    \item[(iv)] Given $K\in \mb{R}$ and a hinge $(\alpha,\beta)$ at $x$ in $X$, we call a hinge $(\bar{\alpha},\bar{\beta})$ at $\bx$ in $\lm{K}$ whose corresponding sides have the same lengths and time orientations and satisfy $\ma_x(\alpha,\beta)=\ma_{\bx}^{\lm{K}}(\bar{\alpha}, \bar{\beta})$ a $K$-comparison hinge for $(\alpha,\beta)$.
\end{itemize}
\end{defi}

As we will never work with different model spaces simultaneously and as the limit in \eqref{eq: angle}, cf.\ \citet*[Proposition 2.14]{BS22}, is the same regardless of the model space in which it is considered, we will drop the superscript in the comparison angle and write $\tilde{\ma}_x(y,z)$. 
\bigskip

Now we highlight some alternative formulations of curvature bounds, beginning with monotonicity comparison. The remaining results in this subsection are also valid when considering upper curvature bounds, where inequalities are switched in the obvious way. However, as we will only need our reformulations in the setting of lower curvature bounds, we shall not state the former case herein.
\bigskip

As stated in \citet*[Definition 4.8]{BS22}, monotonicity comparison (specifically point \emph{(ii)}) requires the additional technical assumption that the neighbourhoods considered are \emph{strictly timelike geodesic}, meaning that for any two close enough timelike related points there is a \emph{timelike} geodesic joining them. This can be omitted by instead assuming that our \LpLS is regular as in Definition \ref{def: regular}; monotonicity comparison then takes the following form:

\begin{defi}[$K$-Monotonicity comparison]
Let $K \in \mb{R}$ and let $X$ be a regular \LpLSn. $X$ is said to satisfy \emph{$K$-monotonicity comparison from below} if every point in $X$ possesses an open neighbourhood $U$ such that:

\begin{itemize}
    \item[(i)] $\tau$ is continuous on $(U\times U) \cap \tau^{-1}([0,D_K))$ and $(U\times U) \cap \tau^{-1}([0,D_K))$ is open. 
    
    \item[(ii)] For all $x,y \in U$ with $x \ll y$ and $\tau(x,y) < D_K$ there is a (timelike) geodesic joining them which is contained entirely in $U$. 
    
    \item[(iii)] Given two timelike geodesics $\alpha, \beta :[0,1] \to X$ of arbitrary time orientation emanating at $\alpha(0)=\beta(0)=x$, we have that $\tilde{\ma}_x^{K,S}(\alpha(s), \alpha(t))$ is a monotonically increasing function in $s$ and $t$ (where it is defined).
\end{itemize}    
\end{defi}

Note that by assuming that our space is regular, points \emph{(i)} and \emph{(ii)} above are precisely those given in Definition \ref{TLCB} of timelike curvature bounds and only point \emph{(iii)} differs.
\bigskip

Currently, monotonicity comparison is the only formulation which is equivalent to triangle comparison in reasonable generality, with other formulations being implied by, but not implying, the monotonicity condition.\newfoot{\citet*[Theorem 14]{BMS22} show that classical triangle comparison can be deduced from angle comparison in the case of lower timelike curvature bounds using additional assumptions on the behaviour of angles.}

\begin{thm}[Triangle and monotonicity comparison are equivalent]
\label{thm: monotonicity}
Let $K \in \mb{R}$ and let $X$ be a regular \LpLSn. Then $X$ has timelike curvature bounded below by $K$ in the sense of Definition \ref{TLCB} if and only if it satisfies $K$-monotonicity comparison from below.
\end{thm}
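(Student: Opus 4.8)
The plan is to argue locally and, in effect, to reproduce the corresponding equivalence of \cite{BS22}, with its hypothesis \emph{strictly timelike geodesic} absorbed into regularity (Definition \ref{def: regular}) and with the extra bookkeeping forced by the size restriction built into Definition \ref{TLCB}. First I would fix $p\in X$ and observe that, since $X$ is regular, every geodesic between timelike related points is timelike; hence the first two conditions defining a $K$-monotonicity neighbourhood of $p$ coincide verbatim with conditions 1 and 2 defining a timelike $(\geq K)$-comparison neighbourhood of $p$ (the word ``timelike'' in the monotonicity version being automatic). Thus both statements in the theorem reduce to one question: on an open $U\ni p$ satisfying those two conditions, is the triangle comparison inequality of Definition \ref{TLCB} equivalent to monotonicity of the signed $K$-comparison angle? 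I would prove this on a fixed such $U$. Since every comparison triangle in $\lm{K}$ that appears below has largest side $<D_K$, i.e.\ is realizable (cf.\ \cite[Lemma 4.6]{KS18}), the whole discussion stays inside the range on which Definition \ref{TLCB} and the monotonicity condition impose anything --- this is exactly the clause ``where it is defined'' --- so the size restriction never obstructs the argument.

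For the implication \emph{triangle comparison $\Rightarrow$ monotonicity}, I would take timelike geodesics $\alpha,\beta$ in $U$ emanating from $x$, fix $s$, and take $t<t'$ with the relevant comparison angles defined. As $\beta(t)$ lies on the side $\gamma_{x\beta(t')}$ of $\Delta(x,\alpha(s),\beta(t'))$, triangle comparison bounds $\tau(\alpha(s),\beta(t))$ above by the time separation in $\lm{K}$ between the corresponding comparison points inside a comparison triangle for $\Delta(x,\alpha(s),\beta(t'))$. I would then compare the sub-triangle of that comparison triangle spanned by $\bx$, $\overline{\alpha(s)}$ and $\overline{\beta(t)}$ with a comparison triangle for the smaller triangle $\Delta(x,\alpha(s),\beta(t))$: the two agree in the two side lengths meeting at the vertex, while the latter has an opposite side of no greater length. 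The (sign-sensitive) monotone dependence of the hyperbolic law of cosines in $\lm{K}$ on the length of the side opposite a vertex then forces a signed angle at $\bx$ of no greater value for the latter, i.e.\ $\tilde{\ma}_x^{K,S}(\alpha(s),\beta(t))\le\tilde{\ma}_x^{K,S}(\alpha(s),\beta(t'))$, using that $\overline{\beta(t)}$ and $\overline{\beta(t')}$ determine the same direction at $\bx$. The symmetric argument controls the dependence on $s$. (The sign convention for $K$-comparison angles is precisely what makes this inequality go the same way for all time orientations of $\alpha$ and $\beta$.)

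For the converse, \emph{monotonicity $\Rightarrow$ triangle comparison}, which is the substantial direction, I would need to recover $\tau(p,q)\le\tau(\bp,\bq)$ for arbitrary points $p,q$ on a timelike triangle $\Delta(x,y,z)\subseteq U$ with $\tau(x,z)<D_K$. I would first use monotonicity to see that the limit defining the angle at each vertex exists and, being an infimum of increasing signed comparison angles, satisfies $\ma^S\le\tilde{\ma}^{K,S}$ at $x$, $y$ and $z$; I would also record the reverse triangle inequality for the signed angles that a geodesic issuing from an interior point of a side makes with the sides of the triangle. After the standard reduction to the case that one of $p,q$ is a vertex, say $p=x$ and $q\in\gamma_{yz}$, I would subdivide $\gamma_{yz}$ by points $y=w_0,\dots,w_n=z$ (with $q=w_j$ for some $j$), form the thin triangles $\Delta(x,w_{i-1},w_i)$, apply the vertex angle estimate at each $w_i$, and develop the associated comparison triangles in $\lm{K}$ consecutively, controlling the resulting broken configuration by an Alexandrov-type lemma in $\lm{K}$. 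Letting $n\to\infty$, the broken configuration should straighten to $\bar\Delta(\bx,\by,\bz)$ and deliver $\tau(x,q)\le\tau(\bx,\bq)$, whence the general case. The hard part will be exactly this last step --- organizing the iterated gluing of comparison triangles, handling (near-)degenerate sub-triangles, and verifying that every configuration stays realizable ($\tau<D_K$) as the subdivision is refined --- which I would carry out following the metric prototype in \cite{BBI01,BH99} and its Lorentzian adaptation in \cite{BS22}.
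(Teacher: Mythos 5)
The paper provides no argument for this theorem: its proof is a one-line citation to \cite[Theorem~4.13]{BS22}, and your proposal is explicitly framed as an attempt to reconstruct that cited proof. Your opening reduction---that under regularity, conditions (i) and (ii) in the definition of a timelike $(\geq K)$-comparison neighbourhood coincide verbatim with conditions (i) and (ii) in the definition of a $K$-monotonicity neighbourhood, so the theorem collapses to the equivalence of the two third conditions on a fixed such neighbourhood---is precisely the paper's own remark stated immediately before the theorem, and you have it right. Your sketch of the direction \emph{triangle comparison} $\Rightarrow$ \emph{monotonicity} is also correct, including the sign bookkeeping: for a $(\geq K)$-comparison neighbourhood $\tau(p,q)\le\tau(\bar p,\bar q)$, so the opposite side of the comparison triangle for $\Delta(x,\alpha(s),\beta(t))$ is no longer than the corresponding distance inside the comparison triangle for $\Delta(x,\alpha(s),\beta(t'))$, and the law of cosines in $\lm{K}$ together with the sign convention turns this into the asserted monotonicity. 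The converse---\emph{monotonicity} $\Rightarrow$ \emph{triangle comparison}---is where the substance lies, and there you correctly identify the ingredients (vertex angle comparison from monotone limits, the straight-angle/reverse-triangle-inequality property at a point interior to a side, and an Alexandrov-lemma development over a refining subdivision, with care about degenerate sub-triangles and preservation of $\tau<D_K$), but you leave the execution as a plan and defer it back to \cite{BS22}. Since the paper defers the entire argument to \cite{BS22} as well, your proposal takes essentially the same route as the paper at the same level of detail; the gap you leave open is exactly the cited proof.
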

\begin{proof}
    See \citet*[Theorem 4.13]{BS22}.
\end{proof}

\begin{thm}[Curvature bounds imply angle and hinge comparison]
\label{thm: equivalent curv bounds}
    Let $X$ be a regular \LpLS with timelike curvature bounded below by $K \in \mb{R}$. Let $x \in X$ and let $\alpha, \beta: [0,1] \to X$ be any two timelike geodesics emanating from $x$.

    \begin{itemize}
        \item[(i)] It holds that 
        \begin{equation}
            \ma_x^S(\alpha,\beta) \leq \tilde{\ma}_{x}^S(\alpha(s),\beta(t))
        \end{equation}
        for all $s,t$ which form a timelike triangle with $x$.
        \item[(ii)] Let $(\bar{\alpha}, \bar{\beta})$ be a comparison hinge in $\lm{K}$. Then 
        \begin{equation}
        \tau(\alpha(1),\beta(1)) \geq \bar{\tau}(\bar{\alpha}(1), \bar{\beta}(1)).  \end{equation}
        \end{itemize}
\end{thm}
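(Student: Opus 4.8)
The plan is to reduce both assertions to the equivalence of triangle comparison and monotonicity comparison, Theorem~\ref{thm: monotonicity}, which is available here because $X$ is regular and has timelike curvature bounded below by $K$; so throughout I may assume that $X$ satisfies $K$-monotonicity comparison from below. For part~(i), fix the hinge $(\alpha,\beta)$ at $x$ and consider the function $(s,t)\mapsto \tilde{\ma}_x^{K,S}(\alpha(s),\beta(t))$, defined on those $(s,t)$ for which $x,\alpha(s),\beta(t)$ span a timelike triangle. By $K$-monotonicity comparison this function is non-decreasing in each variable, hence has a limit as $s,t\to 0^+$ equal to its infimum over the admissible set. By the definition of the angle in~\eqref{eq: angle}, together with the model-independence of that limit recalled after~\eqref{eq: angle} (so that we may compute the comparison angle, and its sign, in $\lm{K}$ rather than in $\lm{0}$), this infimum equals $\ma_x^S(\alpha,\beta)$. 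Therefore $\ma_x^S(\alpha,\beta)\leq \tilde{\ma}_x^{K,S}(\alpha(s),\beta(t))$ for every admissible $(s,t)$, which is~(i).

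For part~(ii) I would first dispose of the degenerate case $\tau(\alpha(1),\beta(1))=0$: here one argues that the comparison hinge also has non-timelike-related endpoints — otherwise, restricting the hinge slightly inward produces in $\lm{K}$ a genuine triangle whose angle at the vertex contradicts~(i) at parameters close to $1$ — so both sides of the claimed inequality vanish. In the main case, $x,\alpha(1),\beta(1)$ span a timelike triangle $\Delta$; pick a comparison triangle $\bar{\Delta}(\tilde{x},\tilde{p},\tilde{q})$ for $\Delta$ in $\lm{K}$. Its two sides at $\tilde{x}$ have the same lengths and time orientations as $\alpha$ and $\beta$, its angle at $\tilde{x}$ is the signed $K$-comparison angle $\tilde{\ma}_x^{K,S}(\alpha(1),\beta(1))$, and its remaining side has $\tau$-length $\tau(\alpha(1),\beta(1))$. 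The comparison hinge $(\bar{\alpha},\bar{\beta})$ at $\bx$ carries those same two sides but opens at the signed angle $\ma_x^S(\alpha,\beta)$, which by~(i) is $\leq \tilde{\ma}_x^{K,S}(\alpha(1),\beta(1))$. It therefore suffices to establish the following purely model-space monotonicity fact: in $\lm{K}$, the $\tau$-distance between the endpoints of two timelike geodesic segments issuing from a common vertex, with prescribed lengths and prescribed time orientations, is a non-decreasing function of the signed angle between them. Feeding in the larger angle $\tilde{\ma}_x^{K,S}(\alpha(1),\beta(1))$ (which reproduces $\tau(\alpha(1),\beta(1))$) and the smaller angle $\ma_x^S(\alpha,\beta)$ (which reproduces $\bar{\tau}(\bar{\alpha}(1),\bar{\beta}(1))$) then yields $\tau(\alpha(1),\beta(1))\geq \bar{\tau}(\bar{\alpha}(1),\bar{\beta}(1))$.

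I expect the model-space monotonicity to be the main obstacle: one must write down the law of cosines in $\lm{K}$ for each of the three cases $K=0$, $K>0$, $K<0$, separate the two sign conventions $\sigma=\pm1$ (equal versus opposite time orientations of $\alpha$ and $\beta$), and check the sign of the derivative of the closing $\tau$-distance in the angle, while also tracking the regime in which the endpoints cease to be timelike related and the $\tau$-distance is identically $0$ (this last point is also what makes the degenerate case above go through). The remaining ingredients — that the monotone limit in~\eqref{eq: angle} is attained as an infimum, and that comparison triangles and comparison hinges of the required type exist — are routine given $K$-monotonicity comparison and the standing size-bound assumptions.
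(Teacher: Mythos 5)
The paper does not give its own argument here; it simply cites \cite[Corollaries 4.11 and 4.12]{BS22}, which are proved via exactly the monotonicity comparison you invoke. So your strategy — deduce both assertions from Theorem \ref{thm: monotonicity} and a model-space law-of-cosines monotonicity fact — is the right one and is essentially what happens in the source. Your treatment of part (i) is correct: the signed comparison angle $\tilde{\ma}_x^{K,S}(\alpha(s),\beta(t))$ is monotone in $s,t$ by Theorem \ref{thm: monotonicity}, so its limit as $s,t\to 0^+$ equals its infimum over the admissible set; the model-independence of the limit noted after \eqref{eq: angle} identifies that limit with $\ma_x^S(\alpha,\beta)$; and the sign $\sigma$ is determined purely by the time orientations of $\alpha,\beta$, so it is consistent between the hinge and the comparison triangles. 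For part (ii) the main case is fine: with $\Delta(x,\alpha(1),\beta(1))$ a timelike triangle, the comparison triangle's angle at $\tilde x$ dominates (in the signed sense) the hinge angle by (i), and the law-of-cosines monotonicity in $\lm{K}$ converts the angle inequality into the desired $\tau$-inequality, once you verify it across $K=0$, $K>0$, $K<0$ and both signs $\sigma$.

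There is, however, a genuine gap in your handling of the degenerate case $\tau(\alpha(1),\beta(1))=0$. You argue that if the comparison hinge had $\bar\alpha(1)\ll\bar\beta(1)$, then "restricting the hinge slightly inward" gives a genuine triangle in $\lm{K}$ whose vertex angle contradicts (i). But (i) is a statement about comparison angles of timelike triangles $\Delta(x,\alpha(s),\beta(t))$ \emph{in} $X$, so to invoke it you must produce parameters $(s,t)$ near $(1,1)$ for which $\alpha(s)$ and $\beta(t)$ are timelike related \emph{in $X$}; the mere existence of a non-degenerate triangle $\bar\Delta(\bar x,\bar\alpha(s),\bar\beta(t))$ in the model space does not supply these. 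If no such $(s,t)$ near $(1,1)$ are admissible in $X$, the contradiction you aim for does not materialize, and the threshold angle (above which the model-space closing distance vanishes) depends on $s,t$, so it cannot be compared directly with comparison angles taken near $(0,0)$. To close this you need either (a) a continuity argument: show there are admissible $(s,t)$ accumulating at $(1,1)$, apply the main case there, and pass to the limit using continuity of $\tau$ (which holds on the relevant set by Definition \ref{TLCB}\emph{(i)}) and of $\bar\tau$ in $\lm{K}$; or (b) a monotonicity argument along the boundary of the admissible set, showing that the comparison angle tends to the degeneracy threshold as the $X$-triangle degenerates, whence $\ma_x(\alpha,\beta)$ is at least that threshold. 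Either is workable, but as written the degenerate case is not proved.
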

\begin{proof}
    See \citet*[Corollaries 4.11 and 4.12]{BS22}.
\end{proof}

We conclude this chapter with the following useful fact about angles.
\begin{pop}
\label{pop: equal angles along geodesic}
    Let $X$ be a strongly causal and locally causally closed \LpLS with timelike curvature bounded below by $K \in \mathbb{R}$, and let $\alpha:[0,1] \to X$ be a timelike geodesic. Let $x=\alpha(t)$ for $t \in (0,1)$ and consider the restrictions $\alpha_-:=\alpha|_{[0,t]}(t-\cdot)$ and $\alpha_+ :=\alpha|_{[t,1]}$ as past-directed and future-directed geodesics emanating from $x$, respectively. 
    Let $\beta$ be a timelike geodesic emanating from $x$. Then $\ma_x(\alpha_-,\beta)=\ma_x(\alpha_+,\beta)$. 
\end{pop}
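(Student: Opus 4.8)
The plan is to reduce the claim to two ingredients: that the two ``halves'' $\alpha_-$ and $\alpha_+$ of the geodesic subtend zero angle at $x$, and the triangle inequality for angles between geodesics issuing from a common point.

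\emph{Step 1: $\ma_x(\alpha_-,\alpha_+)=0$.} For small $s,t>0$ the points $\alpha_-(s),x,\alpha_+(t)$ all lie on $\alpha$ with $\alpha_-(s)\ll x\ll\alpha_+(t)$; since sub-curves of distance realisers are again distance realisers, $\tau(\alpha_-(s),\alpha_+(t))=\tau(\alpha_-(s),x)+\tau(x,\alpha_+(t))$, so $\Delta(\alpha_-(s),x,\alpha_+(t))$ is a degenerate timelike triangle. Its comparison triangle in $\lm{0}$ is then a degenerate timelike segment (equality in the reverse triangle inequality forces collinearity) on which the comparison point $\bx$ of $x$ lies strictly between the images of the other two vertices; the two connecting geodesics at $\bx$ therefore have opposite unit tangents, and $\tilde{\ma}_x^0(\alpha_-(s),\alpha_+(t))=\arcosh(1)=0$. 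Letting $s,t\to 0$ gives $\ma_x(\alpha_-,\alpha_+)=0$.

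\emph{Step 2: conclusion.} Applying the triangle inequality for angles to the triples $(\alpha_-,\alpha_+,\beta)$ and $(\alpha_+,\alpha_-,\beta)$ and using Step 1,
\[
\ma_x(\alpha_-,\beta)\le\ma_x(\alpha_-,\alpha_+)+\ma_x(\alpha_+,\beta)=\ma_x(\alpha_+,\beta),
\]
and symmetrically $\ma_x(\alpha_+,\beta)\le\ma_x(\alpha_-,\beta)$, so the two angles coincide. The crux is therefore the triangle inequality for angles — for timelike geodesics of arbitrary time orientation and the hyperbolic notion of angle; this has to be in hand, and should either follow from the angle calculus of \cite{BS22} or be deducible from Theorem \ref{thm: monotonicity}.

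If one prefers to avoid quoting the triangle inequality, Step 2 can be replaced by triangle comparison inside a single triangle (here we may assume $\beta$ future-directed, the past-directed case being analogous with $\alpha_-\leftrightarrow\alpha_+$). For small $w,u,r>0$, take the timelike triangle on $\alpha_-(w),\alpha_+(u),\beta(r)$, ordered causally, with the side between $\alpha_-(w)$ and $\alpha_+(u)$ chosen to be the corresponding sub-arc of $\alpha$, so that $x$ is interior to it; let $\bx$ be the comparison point of $x$ in a comparison triangle in $\lm{K}$. Since $\bx$ is interior to a timelike geodesic segment, the two unsigned comparison angles at $\bx$ to the endpoints of that segment — each measured against $\beta(r)$ — coincide, say equal $\bar\psi$; and the relevant sub-triangles in $X$ differ from the corresponding sub-configurations of the comparison triangle only in the length of the side from $x$ to $\beta(r)$, which in $X$ is at least its comparison value since curvature is bounded below by $K$. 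A law-of-cosines computation in $\lm{K}$, in the causal case $\alpha_+(u)\ll\beta(r)$ (available for $u$ small relative to $r$ by continuity of $\tau$ near $(x,\beta(r))$), then gives $\tilde{\ma}_x(\alpha_-(w),\beta(r))\le\bar\psi\le\tilde{\ma}_x(\alpha_+(u),\beta(r))$; letting $w,u,r\to 0$ and invoking Theorem \ref{thm: monotonicity} yields $\ma_x(\alpha_-,\beta)\le\ma_x(\alpha_+,\beta)$, while the reverse inequality follows from the symmetric ``arm lemma'' obtained by gluing the two comparison sub-triangles along their common side from $x$ to $\beta(r)$ and comparing with the single comparison triangle. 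I expect the main obstacle to be precisely this bookkeeping of causal orderings — the case split according to whether $\alpha_+(u)\ll\beta(r)$ or $\beta(r)\ll\alpha_+(u)$ — and the convexity checks it entails in $\lm{K}$.
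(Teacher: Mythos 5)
Your first argument (Steps 1--2) is essentially correct, and since the paper's own ``proof'' of Proposition \ref{pop: equal angles along geodesic} is simply a pointer to \cite[Corollary 4.6]{BS22} (with \cite[Lemma 4.10]{BS22} for existence of the angle), I cannot tell you whether you have reproduced the argument there; but yours is the natural direct proof. Step~1 is right: for $s,u>0$ small, the three points $\alpha_-(s)\ll x\ll\alpha_+(u)$ lie on the single distance-realizer $\alpha$, so the triangle $\Delta(\alpha_-(s),x,\alpha_+(u))$ is degenerate, its $\lm{0}$-comparison triangle is a timelike segment with $\bx$ in its interior, the two unit tangents at $\bx$ are $\pm v$, and $\arcosh(|\langle -v,v\rangle|)=\arcosh(1)=0$; hence $\ma_x(\alpha_-,\alpha_+)=0$. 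For Step~2 you correctly identify the triangle inequality of angles as the crux, and you need not hedge: this is exactly \cite[Theorem~4.5.(i)]{BS22}, which the present paper itself invokes in the proof of Lemma~\ref{lem: non-deg-cond}.(ii), so the tool is squarely available. The one thing you pass over silently is the \emph{existence} of the angles $\ma_x(\alpha_\pm,\beta)$ appearing on both sides; this is not automatic, and it is precisely here that the standing hypotheses (local causal closedness, strong causality, timelike curvature bounded below) enter, via \cite[Lemma~4.10]{BS22}. You should state that explicitly rather than invoking the triangle inequality on quantities whose existence has not been secured. Your alternative ``avoid the triangle inequality'' sketch is considerably weaker: it leaves unverified the case distinctions you flag, the claimed equality of the two comparison sub-angles at $\bx$ (which needs a Gluing-Lemma/arm-lemma type argument, not merely that $\bx$ is interior to a segment), and the law-of-cosines estimate; given that the triangle inequality of angles is already established in \cite{BS22}, there is no reason to take this route.
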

\begin{proof}
    See \citet*[Corollary 4.6, Lemma 4.10]{BS22} for the proof of the equality and for the existence of the angle, respectively.
\end{proof}

\subsection{Elementary concepts from metric geometry}

We now turn to presenting some basic definitions from the realm of metric geometry. Most importantly, we would like to highlight the fundamental differences in the definitions of lengths and curvature bounds in the metric setting when compared to those Lorentzian pre-length spaces. 

\begin{defi}[Length of a curve and geodesics]
Let $(X,d)$ be a metric space. 
The \emph{length} of a curve $\gamma: [a,b] \to X$ from $x$ to $y$ is defined as 
\begin{equation}
L_d(\gamma):=\sup \Set*{ \sum_{i=0}^{n-1} d(\gamma(t_i),\gamma(t_{i+1})) \given a=t_0 < t_1 < \ldots < t_n = b, n \in \mathbb{N}}.
\end{equation}
If $L_d(\gamma)=d(x,y)$, then $\gamma$ is called a \emph{distance-realizer} or \emph{geodesic}. 
\end{defi}

We define triangles, comparison triangles, and comparison points in complete analogy to Definition \ref{def: tr}. Again we assume that all triangles satisfy size bounds. We will generally not use Lorentzian and Riemannian model spaces simultaneously, however we shall denote the Riemannian model spaces by $M_k$ for clarity, cf.\ \citet*{CE75}. Similarly, we will not consider metric and Lorentzian geodesics  simultaneously, so context should be sufficient to deduce which of the two concepts is being applied.

\begin{defi}[Metric triangle comparison]
Let $X$ be a metric space. An open subset $U$ is called a \emph{$(\geq K)$-comparison neighbourhood} (or \emph{$(\leq k)$-comparison neighbourhood}) if $(U,d|_{U \times U})$ is geodesic for pairs of points with distance less than $\diam(M_k)$, and for all triangles $\Delta(x,y,z)$ (satisfying size bounds) in $U$, and all $p,q \in \Delta(x,y,z)$, the following is satisfied: let $\bar{\Delta}(\bx,\by,\bz)$ be a comparison triangle in $M_k$ for $\Delta(x,y,z)$  and let $\bp,\bq \in \bar{\Delta}(\bx,\by,\bz)$ be comparison points for $p$ and $q$ respectively. Then
\begin{equation}
\label{eq: metric curv bds}
    d(p,q) \geq d(\bar{p},\bar{q}) \quad \text{ (or } d(p,q) \leq d(\bar{p},\bar{q})\text{)}\,.
\end{equation}
We say $X$ has \emph{curvature bounded below by $k$} if it is covered by $(\geq k)$-comparison neighbourhoods. Likewise, its \emph{curvature is bounded above by $k$} if it is covered by $(\leq k)$-comparison neighbourhoods. 
\bigskip

We say $X$ has global curvature bounded below (or above) by $k$ if $X$ is a $(\geq k)$ (or $(\leq k)$) comparison neighbourhood. 
Spaces with global curvature bounded above by $k$ are called CAT($k$) spaces. 
\end{defi}

\section{Metric spaces with global curvature bounds}\label{sec:globoMetric}
The globalization of curvature bounds in metric spaces serves as a natural motivation for investigating analogous results in the Lorentzian case. Hence, we include here a brief discussion of the metric picture in order to familiarize ourselves and the reader with the techniques and constraints we wish to transfer. For details of the wider metric setting, we refer the reader to the introductory texts by \citet*{BBI01}, \citet*{BH99}, and \citet*{AKP19}.

\subsection{Curvature bounded above}
Let us first consider the case of curvature bounded above. The following example demonstrates that globalization is not automatic in this case, and that some additional assumptions are required. 

\begin{ex}[The circle]\label{ex:metricCircle}
Consider the unit circle $X \coloneqq \mathbb{S}^1$ with its intrinsic metric. Then locally, $X$ is isometric to a line segment, which clearly has curvature bounded above by $k=0$. 

However, $X$ itself is not a $\leq 0$-comparison neighbourhood: consider a large triangle defined by three equidistant points in $X$, such that it covers the whole circle.  
The corresponding comparison triangle in the Euclidean plane is also equilateral. Given any two points $p,q$ on different sides of the triangle in $X$, the triangle inequality yields equality when going along the shorter of the two arcs between each pair of points, see Figure \ref{fig: metric counterexample glob curv bds}. However, in the plane, triangle equality is only obtained when the two points lie on the same side of the triangle. It follows that \eqref{eq: metric curv bds} is not satisfied. 

In summary, $X$ has (local) curvature bounded above by $k=0$, but does not have the corresponding global bound. Therefore, any globalization theorem for curvature bounded above requires assumptions which the circle does not satisfy when $k=0$.

\end{ex} 

\begin{figure}[ht]
\begin{center}
\begin{tikzpicture}[line cap=round,line join=round,>=triangle 45,x=1cm,y=1cm]
\draw (0,0) circle (1cm);
\draw (2.133974596215561,-0.5)-- (3,1);
\draw (3,1)-- (3.866025403784439,-0.5);
\draw[dotted] (2.711324865405187,-0.5)-- (2.4226497308103743,0);
\draw (3.866025403784439,-0.5)-- (2.133974596215561,-0.5);
\draw [shift={(0,0)},line width=1pt]  plot[domain=3.141592653589793:4.1887902047863905,variable=\t]({1*1*cos(\t r)+0*1*sin(\t r)},{0*1*cos(\t r)+1*1*sin(\t r)});
\begin{scriptsize}
\coordinate [circle, fill=black, inner sep=0.5pt, label=90: {$z$}] (by) at (0,1);
\coordinate [circle, fill=black, inner sep=0.5pt, label=90: {$\bz$}] (by) at (3,1);
\coordinate [circle, fill=black, inner sep=0.5pt, label=0: {$y$}] (by) at  (0.8660254037844387,-0.5);
\coordinate [circle, fill=black, inner sep=0.5pt, label=180: {$x$}] (by) at (-0.8660254037844387,-0.5);
\coordinate [circle, fill=black, inner sep=0.5pt, label=180: {$\bx$}] (by) at (2.133974596215561,-0.5);
\coordinate [circle, fill=black, inner sep=0.5pt, label=0: {$\by$}] (by) at (3.866025403784439,-0.5);
\coordinate [circle, fill=black, inner sep=0.5pt, label=270: {$p$}] (by) at (-0.5,-0.8660254037844385);
\coordinate [circle, fill=black, inner sep=0.5pt, label=180: {$q$}] (by) at (-1,0);
\coordinate [circle, fill=black, inner sep=0.5pt, label=270: {$\bp$}] (by) at (2.711324865405187,-0.5);
\coordinate [circle, fill=black, inner sep=0.5pt, label=180: {$\bq$}] (by) at (2.4226497308103743,0);
\end{scriptsize}
\end{tikzpicture}
\caption{Triangle comparison fails for too large triangles in the circle.}
\end{center}
\label{fig: metric counterexample glob curv bds}
\end{figure}
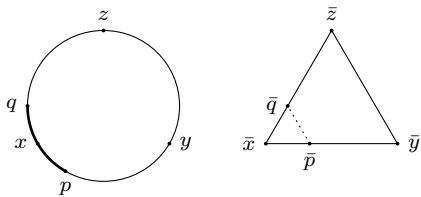

The following theorem specifies sufficient additional assumptions under which upper curvature bounds may be globalized: 
\begin{thm}[Alexandrov's Patchwork]
\label{thm: alex patch}
Let $X$ be a metric space with (local) curvature bounded above by $k$ and suppose that there is a unique geodesic joining each pair of points that are a distance less than $\diam(M_k)$ apart. If these geodesics vary continuously with their endpoints,\newfoot{This means that if $x_n \to x, y_n \to y$, then $\gamma_{x_n y_n} \to \gamma_{xy}$ uniformly.} then $X$ is a CAT($k$) space.
\end{thm}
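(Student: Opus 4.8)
The plan is to show directly that every geodesic triangle $\Delta(x,y,z)$ in $X$ whose perimeter is less than $2\diam(M_k)$ satisfies the CAT($k$) comparison inequality, by cutting it into small triangles that each lie in a local $(\le k)$-comparison neighbourhood and then patching the resulting local inequalities together. It is convenient to recall first that it suffices to verify the \emph{one-sided} estimate: for every such triangle, every vertex (say $x$) and every point $p$ on the opposite side $[y,z]$, one has $d(x,p)\le d(\bar x,\bar p)$ in a comparison triangle $\bar\Delta(\bar x,\bar y,\bar z)\subseteq M_k$ with $\bar p$ the comparison point of $p$; by the standard equivalences among the formulations of the CAT($k$) condition (distance comparison, angle comparison, the four-point condition) this recovers the full inequality. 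The two hypotheses of the theorem enter exactly here: having unique geodesics between points at distance $<\diam(M_k)$, varying continuously with their endpoints, is what lets us build a genuine ruled filling of $\Delta(x,y,z)$ and ensures that the pieces we cut it into are again honest geodesic triangles satisfying the size bounds for $M_k$.

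Concretely, I would parametrise $[y,z]$ by $\sigma\colon[0,1]\to X$ and, for each $t$, take the geodesic $\gamma_t\coloneqq\gamma_{x\sigma(t)}$ — these exist and are unique because the perimeter bound forces $d(x,\sigma(t))<\diam(M_k)$. Continuity of geodesics in their endpoints makes $F(t,s)\coloneqq\gamma_t(s)$ a continuous map $[0,1]^2\to X$ with compact image $S\supseteq\Delta(x,y,z)$. Using the local upper curvature bound, cover $S$ by finitely many $(\le k)$-comparison neighbourhoods and, by a Lebesgue-number argument applied to $F$, choose grids $0=t_0<\dots<t_N=1$ and $0=s_0<\dots<s_M=1$ fine enough that each cell $F([t_{i-1},t_i]\times[s_{j-1},s_j])$ lies in a single such neighbourhood. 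Replacing the (a priori non-geodesic) ``horizontal'' grid curves by the unique short geodesics joining consecutive grid vertices, I obtain a subdivision of $\Delta(x,y,z)$ into small geodesic quadrilaterals, each of which splits along a diagonal into two small geodesic triangles; for a fine enough grid each of these lies in a $(\le k)$-comparison neighbourhood and satisfies the size bounds, hence satisfies the CAT($k$) comparison inequality.

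The core of the argument is the patching step, carried out via Alexandrov's Lemma — a statement purely about configurations of triangles in the model plane $M_k$: if two geodesic triangles share an edge and each satisfies the comparison inequality, then gluing their comparison triangles along the corresponding edge and straightening the broken side yields a comparison triangle for the amalgamated triangle, with the angle at the common vertex controlled in the correct direction. Working radially outward from $x$ — first amalgamating the two triangles of each quadrilateral, then amalgamating the quadrilaterals of the innermost radial layer into a single fan based at $x$, and finally absorbing the successive radial layers — a finite induction upgrades the local comparison inequalities into the conclusion that the angle of $\Delta(x,y,z)$ at $x$ subtended by $\sigma(t)$ is at most the corresponding model angle, equivalently $d(x,\sigma(t))\le d(\bar x,\overline{\sigma(t)})$ for all $t$. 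As $x$, the opposite side, and the point on it were arbitrary, this is exactly the one-sided estimate, so $X$ is CAT($k$).

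I expect the patching step to be the main obstacle, for two reasons. First, Alexandrov's Lemma needs an angle-sum hypothesis at each gluing vertex (the two glued angles should total at least $\pi$); one must check this is preserved throughout the induction, which is where the fact that the ``horizontal'' grid segments are themselves geodesics — together with the local comparison inequalities already established — is used. Second, when $k>0$ one must ensure that no amalgamated triangle ever violates the size bound $\diam(M_k)$ when developed in $M_k$; this requires an a priori bound coming from the assumed perimeter bound on $\Delta(x,y,z)$ together with the control of perimeters under the straightening operation in Alexandrov's Lemma. The remaining pieces — compactness of $S$, the Lebesgue-number argument, the short-geodesic replacement, and the equivalence of the one-sided estimate with the full CAT($k$) condition — are routine.
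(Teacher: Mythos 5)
Your overall strategy — fill $\Delta(x,y,z)$ by the ruled surface $F(t,s)=\gamma_{x\sigma(t)}(s)$, use compactness and a Lebesgue-number argument to chop the filling into small geodesic triangles sitting inside $(\le k)$-comparison neighbourhoods, and then patch via Alexandrov's (Gluing) Lemma — is precisely the approach of the result the paper cites (\cite[Prop.~II.4.9]{BH99}), and it is the same scheme the paper mirrors in the Lorentzian case in Theorem~\ref{thm: Lor AlexPatch}. The reduction to the one-sided estimate $d(x,p)\le d(\bar x,\bar p)$, the use of the perimeter bound to guarantee $d(x,\sigma(t))<\diam(M_k)$, the insertion of auxiliary ``horizontal'' short geodesics, and the split of each cell along a diagonal are all fine.

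The gap is the \emph{order} of the patching. You propose to amalgamate radially, layer by layer: first the fan of innermost cells around $x$, then absorb successive outward layers. But after you glue the innermost-layer triangles $\Delta\bigl(x,\gamma_{t_{i-1}}(s_1),\gamma_{t_i}(s_1)\bigr)$ along their shared radial sides, the resulting region is bounded by the two radial geodesics and the \emph{broken} polyline through the points $\gamma_{t_i}(s_1)$ — this is not a geodesic triangle, and the Gluing Lemma (in the form used in \cite[Lemma~4.3.1]{BR22} or \cite[I.2.16]{BH99}) only tells you that a geodesic triangle, subdivided into two geodesic sub-triangles satisfying the comparison inequality, again satisfies it. So the very first layer already produces an amalgam the lemma does not speak about, and there is no geodesic triangle to hand off to the next layer. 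Making the layer-by-layer order rigorous would essentially require a majorization/development argument for polygons with broken sides, which is a genuinely stronger tool.

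The fix — and what \cite{BH99} and the paper's Lorentzian Theorem~\ref{thm: Lor AlexPatch} actually do — is to reverse the two loops. First work \emph{column by column}: within the thin triangle $\Delta\bigl(x,\sigma(t_{i-1}),\sigma(t_i)\bigr)$, glue the chain of cells from $x$ outward. Each intermediate amalgam $\Delta\bigl(x,\gamma_{t_{i-1}}(s_j),\gamma_{t_i}(s_j)\bigr)$ \emph{is} a geodesic triangle (the two radial sides are sub-segments of $\gamma_{t_{i-1}},\gamma_{t_i}$, and the top side is precisely one of your auxiliary short geodesics), so the Gluing Lemma applies at every step, and you conclude each thin triangle satisfies the comparison inequality. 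Then glue the thin triangles into a fan along the shared radial sides $[x,\sigma(t_i)]$. This final fan-gluing is legitimate exactly because the subdivision of the opposite side $[y,z]$ was taken along the original geodesic $\sigma$, so $[\sigma(t_{i-1}),\sigma(t_i)]\cup[\sigma(t_i),\sigma(t_{i+1})]$ is again a geodesic and each intermediate amalgam remains a geodesic triangle; moreover the angle-sum hypothesis you flag (angles at $\sigma(t_i)$ summing to at least $\pi$) holds precisely because $\sigma(t_i)$ lies in the interior of a geodesic side. With that reordering, your remaining concerns (size bounds for $k>0$, etc.)\ are as routine as you say, and the proof goes through.
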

\begin{proof}
See \citet*[Proposition II.4.9]{BH99}.
\end{proof}

On a related note, these additional assumptions under which globalization of curvature bounds can be applied are rather mild, in the sense that they always hold in a CAT($k$) space. 
In other words, a metric space with curvature bounded above by $k$ is CAT($k$) if and only if it is uniquely geodesic (for points with distance less than $\diam(M_k)$) and these geodesics vary continuously with their endpoints. 

\begin{pop}[Elementary properties of CAT($k$) spaces]
\label{pop: Elementary Catk}
Let $X$ be a CAT($k$) space. Then geodesics in $X$ are unique between points with distance less than $\diam(M_k)$ and these geodesics vary continuously with their endpoints. 
\end{pop}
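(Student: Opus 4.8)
The plan is to derive both assertions directly from the $\mathrm{CAT}(k)$ inequality \eqref{eq: metric curv bds}, applied to suitably chosen (possibly degenerate) comparison triangles; recall that a $\mathrm{CAT}(k)$ space is by definition geodesic for pairs of points at distance less than $\diam(M_k)$, so the only content is uniqueness and continuity. For uniqueness, let $\gamma_1,\gamma_2\colon[0,1]\to X$ be constant-speed geodesics from $x$ to $y$ with $d(x,y)<\diam(M_k)$, and view $(\gamma_1,\gamma_2)$ as the two non-trivial sides of the degenerate triangle $\Delta(x,y,y)$ whose third side is the constant path at $y$. Its side lengths are $d(x,y),d(x,y),0$, of total perimeter $2d(x,y)<2\diam(M_k)$, so a comparison triangle exists in $M_k$; it is the geodesic segment $[\bar x,\bar y]$ of length $d(x,y)$, with the two copies of $\bar y$ identified. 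For every $t$ the comparison point of $\gamma_1(t)$ and the comparison point of $\gamma_2(t)$ are one and the same point of $[\bar x,\bar y]$, so \eqref{eq: metric curv bds} forces $d(\gamma_1(t),\gamma_2(t))\le 0$, i.e.\ $\gamma_1=\gamma_2$. (If one wishes to avoid a degenerate vertex, the same conclusion follows by applying the inequality to the non-degenerate triangles $\Delta(x,\gamma_1(t),\gamma_2(t))$ for $t\in(0,1)$.)

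For continuous variation, suppose $x_n\to x$ and $y_n\to y$ with $d(x,y)<\diam(M_k)$; then $d(x_n,y_n)<\diam(M_k)$ for $n$ large, so the geodesics $\gamma_{x_ny_n}$ are defined and, by the first part, unique. We compare $\gamma_{xy}$ to $\gamma_{x_ny_n}$ through the auxiliary geodesic $\gamma_{xy_n}$ (changing one endpoint at a time): by the triangle inequality in $\bigl(C([0,1],X),\|\cdot\|_\infty\bigr)$ it is enough to prove $\|\gamma_{xy}-\gamma_{xy_n}\|_\infty\to0$ and $\|\gamma_{xy_n}-\gamma_{x_ny_n}\|_\infty\to0$. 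For the first, apply \eqref{eq: metric curv bds} to $\Delta(x,y,y_n)$: for $p_t\coloneqq\gamma_{xy}(t)$ on $[x,y]$ and $q_t\coloneqq\gamma_{xy_n}(t)$ on $[x,y_n]$, with $\bar p_t,\bar q_t$ the corresponding comparison points in the model triangle $\bar\Delta(\bar x,\bar y,\bar y_n)\subseteq M_k$, one gets $d(p_t,q_t)\le d(\bar p_t,\bar q_t)$. Holding $\bar x,\bar y$ fixed, we have $d(\bar x,\bar y)=d(x,y)$, $d(\bar x,\bar y_n)=d(x,y_n)\to d(x,y)$ and $d(\bar y,\bar y_n)=d(y,y_n)\to0$, so $\bar y_n\to\bar y$ and the comparison angle of $\bar\Delta$ at $\bar x$ tends to $0$; since $t\,d(x,y)$ ranges over the compact set $[0,d(x,y)]\subseteq[0,\diam(M_k))$, the law of cosines in $M_k$ yields $\sup_t d(\bar p_t,\bar q_t)\to0$, hence $\|\gamma_{xy}-\gamma_{xy_n}\|_\infty\to0$. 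The second estimate is symmetric: reparameterize from the common endpoint $y_n$ and apply \eqref{eq: metric curv bds} to $\Delta(y_n,x,x_n)$, whose short side is $[x,x_n]$.

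The uniqueness step is immediate once the right degenerate triangle is chosen, so the real work sits in the continuity argument --- above all in the uniform-in-$t$ estimate for $d(\bar p_t,\bar q_t)$ and in keeping every comparison triangle within the size bounds. The latter is an issue only for $k>0$, and is handled automatically for large $n$ because the hypothesis $d(x,y)<\diam(M_k)$ is strict and perimeters vary continuously. The key device is the passage through $\gamma_{xy_n}$: it reduces everything to model triangles with one short side, which degenerate to segments and are therefore easy to control, whereas a direct comparison of $\gamma_{xy}$ and $\gamma_{x_ny_n}$ would involve two triangles sharing no vertex.
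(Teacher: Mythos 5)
Your proof is correct and is essentially the standard argument of Bridson--Haefliger [Prop.\ II.1.4], which the paper cites without reproducing: uniqueness via the degenerate comparison triangle for $\Delta(x,y,y)$ whose model triangle collapses to a segment, and continuity by moving one endpoint at a time and controlling the model-space geodesics emanating from a common vertex via the law of cosines, the size bounds being preserved for large $n$ because $d(x,y)<\diam(M_k)$ is a strict inequality. One minor caveat: your parenthetical alternative for uniqueness via $\Delta(x,\gamma_1(t),\gamma_2(t))$ does not go through as stated, since that comparison triangle is a genuine isoceles triangle in $M_k$ with side lengths $t\,d(x,y)$, $t\,d(x,y)$, $d(\gamma_1(t),\gamma_2(t))$, and the CAT($k$) inequality applied to it imposes no constraint forcing the third side to vanish; to avoid the double vertex one should instead use, say, $\Delta(x,\gamma_1(t),y)$ with two sides along $\gamma_1$ and the third side $\gamma_2$, whose comparison triangle again degenerates to a segment because $t\,d(x,y)+(1-t)\,d(x,y)=d(x,y)$.
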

\begin{proof}
See \citet*[Proposition II.1.4]{BH99}.
\end{proof}

It should be apparent that the circle with its intrinsic metric does not have continuously varying geodesics, let alone unique geodesics (recall that $\diam(M_0)=\infty$). Therefore, by the above proposition, it cannot be CAT($0$).

\subsection{Curvature bounded below}
Concerning curvature bounded below, there is also a globalization result, which is perhaps even more iconic than the Alexandrov's Patchwork approach for curvature bounded above. It is known as the theorem of Toponogov and was first proven for general complete length spaces by \citet*{BGP92}.
\bigskip

Note that for Theorem \ref{thm: Toponogov} and Theorem \ref{thm: meyers} in the form stated, we follow \citet*{BBI01} and explicitly exclude 1-dimensional spaces. More precisely, if $k>0$, then $X$ must not be isometric to $\mb{R},\, (0, \infty),\, [0,B]$ for any $B > \frac{\pi}{\sqrt{k}}$, or any circle with radius greater than $\frac{1}{\sqrt{k}}$. In \citet*[Proposition 8.44]{AKP19} it is shown that these are precisely the complete length spaces to which Theorem \ref{thm: meyers} does not apply.

\begin{thm}[Toponogov's Globalization Theorem]
\label{thm: Toponogov}
Let $X$ be a complete length space with curvature bounded below by $k$, which is not one of the aforementioned $1$-dimensional spaces. Then $X$ has global curvature bounded below by $k$.
\end{thm}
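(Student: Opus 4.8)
The plan is to deduce the \emph{global} comparison property from the \emph{local} one by propagating it outward along geodesics, following the strategy of Toponogov~\cite{Top59} and Perelman~\cite{BGP92} (see also \cite[\S10.3]{BBI01} and \cite{AKP19}); completeness is used throughout to extract convergent subsequences of geodesics and of almost-extremal configurations. As preliminaries I would first record the equivalence, \emph{at the local level}, of the standard reformulations of ``curvature $\geq k$'': the $(1+3)$-point (four-point) angle condition, the point-to-side comparison, and the monotonicity of $k$-comparison angles along a hinge. I would also collect the structural consequences of a local lower bound in a complete length space that are needed repeatedly: geodesics exist between sufficiently close points, angles between geodesics are well defined, and --- crucially --- geodesics in $X$ do not branch.

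It then suffices to globalize the one-vertex (hinge) form of comparison: for an arbitrary point $p$ and an arbitrary unit-speed geodesic $\gamma\colon[0,L]\to X$, the $k$-comparison angle $t\mapsto\tilde\angle_k\bigl(\gamma(0);p,\gamma(t)\bigr)$ is monotone, and is dominated by the genuine angle $\angle_{\gamma(0)}(p,\gamma)$. Global comparison for a general triangle $\Delta(x,y,z)$ then follows by applying this at each of its three vertices and comparing the resulting hinges with the comparison triangle in $M_k$. This reduction is also where the exclusion of the one-dimensional spaces (for $k>0$) is forced: on a circle of circumference exceeding $2\pi/\sqrt{k}$ the one-vertex statement fails outright, and the hypothesis removes precisely such obstructions by ensuring there is enough transverse ``room'' to perturb geodesics, which is what makes the propagation step go through.

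The heart of the argument is this outward propagation. Fixing $p$, comparison holds for all configurations contained in a small ball around $p$ by the local hypothesis. Let $R$ be the supremum of radii $r\le\diam(M_k)$ for which comparison holds for every configuration $(p;\gamma)$ with $\gamma\subseteq\overline{B}(p,r)$; using completeness and non-branching of geodesics one shows this supremum is attained, so it remains to rule out $R<\diam(M_k)$. Assuming $R<\diam(M_k)$, I would pass to a ``critical'' configuration essentially at radius $R$ and cut it at a carefully chosen interior point --- of the base geodesic $\gamma$, or of a geodesic from $p$ --- producing two strictly smaller configurations to which the inductive hypothesis applies; gluing the corresponding model sub-configurations, using the angle-monotonicity comparison at the cut point together with the fact that the real angle there is no larger than its model counterpart, recovers comparison for the original configuration and contradicts its criticality. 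Iterating the cut down to small scale reduces everything, in the limit, to genuinely local triangles, closing the induction.

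I expect this last step to be the main obstacle: it is essentially the whole content of Toponogov's theorem, not a routine verification. The delicate points are that $k$-comparison angles do \emph{not} add under subdivision, so the gluing of model sub-configurations must invoke the sharp monotonicity lemmas rather than a naive concatenation; that the cut point must be chosen so that both resulting configurations are genuinely smaller in the chosen induction parameter (and so that the induction terminates); and that the passage to the supremal radius $R$ requires controlling limits of geodesics via non-branching and completeness. Threading the one-dimensionality hypothesis through these perturbation arguments, though conceptually clear, likewise needs care. An alternative route I would keep in mind replaces the combinatorial induction by Petrunin's gradient-flow machinery (\cite{AKP19}), but this ultimately rests on the same structural inputs.
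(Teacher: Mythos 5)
The paper does not give a proof of this theorem: it is recalled purely as background from metric geometry, with the proof delegated to \cite[Theorem~10.3.1]{BBI01} (under a local compactness assumption) and to \cite{LS13, AKP19} for more general treatments. Your outline follows the same Toponogov--Perelman globalization strategy as those references --- reduce to the one-vertex/hinge comparison, propagate it outward along geodesics by a cut-and-glue induction on scale, using completeness and non-branching to control limits --- so it is consistent with the approach the paper defers to, and you correctly flag that the propagation-and-gluing step is the real content of the theorem rather than a routine verification.
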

\begin{proof}
See \citet*[Theorem~10.3.1]{BBI01} for a proof under a local compactness assumption. See \citet*{LS13, AKP19} for more general proofs, as well as a timeline of refinements.
\end{proof}

In the metric case, there is an addendum to Toponogov's Theorem, generalizing the Bonnet--Myers Theorem from Riemannian geometry to the setting of Alexandrov geometry. In essence, the theorem bounds the diameter of a complete length space with local, positive, lower curvature bound in such a way that comparison triangles exist, thus eliminating concern about the existence of triangles in $X$ which are too large.

\begin{thm}[Lower curvature  bounds imply finite diameter]
\label{thm: meyers}
Let $X$ be a complete length space with (local) curvature bounded below by some $k>0$, which is not one of the aforementioned $1$-dimensional spaces. Then $\diam(X) \leq \frac{\pi}{\sqrt{k}}$. 
\end{thm}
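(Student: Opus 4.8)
The plan is to proceed in two stages: globalize the lower bound, and then extract the diameter estimate from it. For the first stage, Toponogov's Globalization Theorem (Theorem~\ref{thm: Toponogov}) applies directly --- $X$ is a complete length space with local curvature bounded below by $k>0$ which is, by hypothesis, not one of the excluded $1$-dimensional spaces --- so $X$ has \emph{global} curvature bounded below by $k$, and in particular $X$ is geodesic. It then suffices to show that $X$ contains no shortest path of length greater than $\pi/\sqrt k=\diam(M_k)$, since this is equivalent to $d(x,y)\le\pi/\sqrt k$ for all $x,y$. Replacing a would-be offending shortest path by a sub-arc, I may moreover assume it has length in $(\pi/\sqrt k,2\pi/\sqrt k)$, which keeps all the auxiliary distances appearing below bounded by $\pi/\sqrt k$.

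For the second stage I would argue by contradiction. Let $\gamma\colon[0,\ell]\to X$ be a unit-speed shortest path with $\pi/\sqrt k<\ell<2\pi/\sqrt k$. At any interior point $m=\gamma(t)$, comparison with the (degenerate, hence realizable) comparison triangle of the small triple $(\gamma(t-\varepsilon),m,\gamma(t+\varepsilon))$ shows that the two directions of $\gamma$ at $m$ make angle $\pi$; hence in the space of directions at $m$ they are antipodal and every third direction $v$ there satisfies $\ma_m(v,\gamma^{-})+\ma_m(v,\gamma^{+})=\pi$. Since $X$ is isometric to none of a line, a ray, an interval or a circle, there is a point $r$ not on $\gamma$; using non-branching of shortest paths in curvature-bounded-below spaces, from some interior point of $\gamma$ one gets a shortest path to $r$ whose direction is transversal to $\gamma$, and one may take $r$ close enough to $\gamma$ that $d(r,\gamma(s))<\pi/\sqrt k$ for all $s$. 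One then ``develops'' the fan of shortest paths from $r$ to a fine subdivision $0=t_0<\dots<t_N=\ell$ of $\gamma$ into $M_k$: placing $\bar r$, successively build the comparison triangles for the $\Delta(r,\gamma(t_{i-1}),\gamma(t_i))$, glued along their common $\bar r$-edges (they exist by the size bounds arranged above). By the metric form of angle and hinge comparison for global lower curvature bounds --- the metric analogue of Theorem~\ref{thm: equivalent curv bounds} --- together with the supplementary-angle relation at each $\gamma(t_i)$, the developed picture is forced, in the limit of refining subdivisions, to contain on the sphere $M_k$ of radius $1/\sqrt k$ a minimizing geodesic of length $\ge\ell>\pi/\sqrt k$; as the diameter of that sphere is $\pi/\sqrt k$, a suitable short subarc of it then produces a comparison pair on the sides of a small triangle in $X$ violating \eqref{eq: metric curv bds}. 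This contradiction gives $\ell\le\pi/\sqrt k$, hence $\diam(X)\le\pi/\sqrt k$.

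The main obstacle --- and the reason for the hypotheses --- is the ``oversized data'' issue: one cannot simply pick $p,q$ with $d(p,q)>\pi/\sqrt k$ and form a comparison triangle on the side $pq$, because $M_k$ has diameter $\pi/\sqrt k$ so no such triangle (and no useful comparison) exists, and every triangle with both $p$ and $q$ as vertices has the same defect. This is precisely the loophole exploited by the excluded spaces: a line, a long interval, or a large circle satisfy the local --- hence, after Toponogov, the global --- lower bound vacuously, every would-be offending triangle either admitting no comparison triangle or being a degenerate segment on which \eqref{eq: metric curv bds} holds with equality (compare Example~\ref{ex:metricCircle}). So the argument must genuinely use completeness (to invoke Toponogov, and to turn ``far-apart points'' into ``an honest long shortest path'') and must genuinely use non-$1$-dimensionality (to produce a transversal geodesic along which angle monotonicity can be applied); absent that structure there is no contradiction to be found. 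Making the development step rigorous --- controlling the accumulated angle defects in the limit and invoking the non-branching of shortest paths --- together with the standing geodesic-ness of $X$ (Hopf--Rinow, under the mild local compactness one may assume) is where the real care is needed; the remainder is bookkeeping.
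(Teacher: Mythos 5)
Because the paper's ``proof'' of this statement is a citation to \cite[Theorem~10.4.1]{BBI01}, you are reconstructing blind, and your first stage is the right setup: globalize via Theorem~\ref{thm: Toponogov}, then reduce the diameter bound to the non-existence of a shortest path of length exceeding $\pi/\sqrt k$, passing to a subarc with $\ell\in(\pi/\sqrt k,\,2\pi/\sqrt k)$ to keep the auxiliary distances to a nearby off-geodesic point $r$ below $\pi/\sqrt k$. The gap is in the development step, which is where all the content lies and which your write-up hand-waves. Gluing the comparison triangles for the $\Delta(r,\gamma(t_{i-1}),\gamma(t_i))$ along the $\bar r$-edges does produce a spherical fan whose outer boundary $\bar\gamma$ has length $\ell$, and (via Alexandrov's Lemma together with the comparison inequality $d(r,\gamma(t_i))\ge d(\bar r,\bar\gamma_i'')$ applied to the coarser triangle) the development is convex toward $\bar r$. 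But a convex curve of length greater than $\pi/\sqrt k$ on $M_k$ is not a contradiction on its own: an arc of a circle of spherical radius close to $\pi/(2\sqrt k)$ around $\bar r$ is convex and can have length up to nearly $2\pi/\sqrt k$. In particular, nothing forces $\bar\gamma$ to be anything like a minimizing geodesic, so the asserted conclusion that ``the developed picture is forced\dots to contain a minimizing geodesic of length $\ge\ell$'' does not follow; the genuine work --- turning the constraints $d(\bar r,\bar\gamma_i)=d(r,\gamma(t_i))$, the convexity, and the angle bounds into a contradiction --- is exactly what is missing, and it is not ``bookkeeping.''

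Two further cautions on the ingredients you invoke. First, the supplementary-angle identity $\ma_m(v,\gamma^-)+\ma_m(v,\gamma^+)=\pi$ at an interior point of a shortest path is not a freebie: the triangle inequality for angles only gives $\ge\pi$, and the reverse inequality requires a separate argument (e.g.\ first variation, or the fact that the space of directions is an Alexandrov space of curvature $\ge1$ with perimeter bound --- which is Bonnet--Myers one dimension down, so one must be careful about circularity). Second, the raw angle comparison $\tilde\ma\le\ma$ at the $\gamma(t_i)$ by itself does not control the sum of the two comparison angles by $\pi$ --- it is Alexandrov's Lemma, not angle comparison, that yields the convexity of the development, and you should not conflate the two. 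The argument in \cite[Theorem~10.4.1]{BBI01}, which the paper defers to, also globalizes via Toponogov first and then produces the contradiction from comparison around a single well-chosen configuration on $\gamma$, without a limiting subdivision.
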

\begin{proof}
See \citet*[Theorem~10.4.1]{BBI01}.
\end{proof}

When deriving an analogue of the Bonnet--Myers Theorem for Lorentzian pre-length spaces in Theorem \ref{thm: lor meyers}, we will not precisely follow the approach taken by \citet*[Theorem 10.4.1]{BBI01} in the metric setting. This is because their method views the bound on the diameter of the space as a direct consequence of Toponogov's Globalization Theorem \ref{thm: Toponogov}, which at the time of writing had no synthetic Lorentzian analogue. Consequently, while their result applies to metric spaces with local curvature bounded below,
we derive a diameter bound on Lorentzian pre-length spaces under the slightly stricter assumption that they have global (timelike) curvature bounded below.
The corresponding result assuming only local timelike curvature bounds and a Lorentzian analogue of Toponogov's Globalization Theorem are therefore natural candidates for future research (see Section \ref{sec:concOutlook}).
We also wish to make the reader aware of the recent preprint by \citet*{BHNR23} which discusses these problems in more detail.

\section{\LpLSs with global curvature bounds}
\label{sec:globoLorentz}

We now return fully to the setting of synthetic Lorentzian geometry. As previously mentioned, the main task of this work is to provide Lorentzian versions of the Alexandrov's Patchwork approach to globalizing upper curvature bounds and the Bonnet--Myers Theorem constraining the diameter of spaces with positive lower curvature bounds on sectional curvature, cf.\ Theorem \ref{thm: alex patch} and Theorem \ref{thm: meyers}, respectively. Let us first discuss Alexandrov's Patchwork.

\subsection{Timelike curvature bounded above}

As it turns out, the proof of the metric globalization result, Theorem \ref{thm: alex patch}, may be very straightforwardly adapted to the Lorentzian setting if we respect some minor technicalities. We first provide a few preparatory results, before diving into the proof proper.
First and foremost, we will need the Gluing Lemma for timelike triangles.
\medskip

\begin{lem}[Gluing Lemma for timelike triangles]
\label{gluinglemma}
Let $K \in \mb{R}$, $X$ be a Lorentzian pre-length space, and $U \subseteq X$ be an open subset satisfying Definition \ref{TLCB} $(i)$ and $(ii)$. Let $T_3:= \Delta(x,y,z)$ be a timelike triangle in $U$ and fix a point $p \in \gamma_{xz}$ with $p \ll y$, such that $T_1 := \Delta(x,p,y)$ and $T_2 := \Delta(p,y,z)$ are also timelike triangles,
see Figure \ref{fig: gluing lemma}. Suppose $T_1$ and $T_2$ have timelike curvature bounded above by $K$, i.e. they satisfy \eqref{eq: triangle comparison}. Then the same holds for $T_3$.

Furthermore, the statement remains valid if $p \in \gamma_{xz}$ is chosen such that $y \ll p$, or if $p$ is on either of the other two sides (in which case the timelike relation to the opposite endpoint is automatic). 

\end{lem}
\begin{figure}[ht]
\begin{center}
\begin{tikzpicture}
\draw (0,0)-- (-0.7340132902094361,2.414285714285714);
\draw (-0.7340132902094361,2.414285714285714)-- (0,1.4);
\draw (0,1.4)-- (0,0);
\draw (0,1.4)-- (3.3867606432599557,6.565282921073598);
\draw (3.3867606432599557,6.565282921073598)-- (-0.7340132902094361,2.414285714285714);
\draw (3.8850696994038274,3.124568798471209)-- (6,0);
\draw (3.8850696994038274,3.124568798471209)-- (6.002261946852696,5.300000482679559);
\draw (6.002261946852696,5.300000482679559)-- (6,0);
\draw [dashed] (6.000597495395052,1.4000001275002611) -- (3.8850696994038274,3.124568798471209);

\draw (-4,0) .. controls (-4.5,2) and (-5,3) .. (-5,3.5);
\draw (-5,3.5) .. controls (-4,4) and (-3,5) .. (-2.5,5.5);

\begin{scriptsize}
\draw (-4,0) .. controls (-3,3) and (-2,4) .. (-2.5,5.5) node (A)[circle, fill=black,inner sep=0.7pt,pos=0.15,label=right:$p$]{};;
\draw (A) .. controls (-4.5,2) and (-4,3) .. (-5,3.5);
\coordinate [circle, fill=black, inner sep=0.7pt, label=270: {$x$}] (A1) at (-4,0);
\coordinate [circle, fill=black, inner sep=0.7pt, label=180: {$y$}] (A1) at (-5,3.5);
\coordinate [circle, fill=black, inner sep=0.7pt, label=90: {$z$}] (A1) at (-2.5,5.5);
\coordinate [circle, fill=black, inner sep=0.7pt, label=270: {$\bx$}] (A1) at (0,0);
\coordinate [circle, fill=black, inner sep=0.7pt, label=0: {$\bp$}] (A1) at (0,1.4);
\coordinate [circle, fill=black, inner sep=0.7pt, label=180: {$\by$}] (A1) at (-0.7340132902094361,2.414285714285714);
\coordinate [circle, fill=black, inner sep=0.7pt, label=90: {$\bz$}] (A1) at (3.3867606432599557,6.565282921073598);
\coordinate [circle, fill=black, inner sep=0.7pt, label=270: {$\bx'$}] (A1) at (6,0);
\coordinate [circle, fill=black, inner sep=0.7pt, label=180: {$\by'$}] (A1) at (3.8850696994038274,3.124568798471209);
\coordinate [circle, fill=black, inner sep=0.7pt, label=90: {$\bz'$}] (A1) at (6.002261946852696,5.300000482679559);
\coordinate [circle, fill=black, inner sep=0.7pt, label=0: {$\bp'$}] (A1) at (6.000597495395052,1.4000001275002611);
\end{scriptsize}
\end{tikzpicture}
\end{center}
\caption{On the left, a timelike triangle $\Delta(x,y,z)$ in $X$ is subdivided into two timelike triangles $\Delta(x,p,y)$ and $\Delta(p,y,z)$. In the middle, the comparison triangles $\Delta(\bx,\bp,\by)$ and $\Delta(\bp,\by,\bz)$ for the sub-triangles share the side $\gamma_{py}$. On the right, the comparison triangle $\Delta(\bx',\by',\bz')$ for the outer triangle may be distinct from $\Delta(\bx,\by,\bz)$ in the middle situation.}
\label{fig: gluing lemma}
\end{figure}
\begin{proof}
See \citet*[Lemma 4.3.1, Corollary 4.3.2]{BR22}. Note that this still works with the new definition of timelike curvature bounds.
\end{proof}

So now we know that, if we can triangulate each big timelike triangle into smaller ones, with each of the small sub-triangles contained in a $(\leq K)$-comparison neighbourhood, then we can reconstruct the big timelike triangle step-by-step, using the Gluing Lemma to get that Definition \ref{TLCB}.\ref{TLCB.item3} is satisfied by the big triangle.
\bigskip

In order to globalize curvature bounds, we now require conditions which guarantee such a triangulation of arbitrary triangles, with sub-triangles contained in comparison neighbourhoods. Using Proposition \ref{pop: tl dia basis}, we have the following elegant description of comparison neighbourhoods in strongly causal Lorentzian pre-length spaces with curvature bounds, cf.\ \citet*[Remark 2.2.12]{Ber20}, which shall turn out to be sufficient:

\begin{pop}[Timelike diamonds form neighbourhood basis of comparison neighbourhoods]
\label{pop: tl dia nhood basis comp nhood}
Let $X$ be a strongly causal and non-timelike locally isolating Lorentzian pre-length space with curvature bounded above (or below) by $K \in \mathbb{R}$. Then each point has a neighbourhood basis of timelike diamonds which are also comparison neighbourhoods.
\end{pop}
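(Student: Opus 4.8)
The plan is to combine the two structural inputs we have just recalled: on the one hand, Proposition \ref{pop: tl dia basis}, which says that under strong causality and non-timelike local isolation the timelike diamonds $\mathcal{I}$ form a basis for the topology (and moreover a diamond can be chosen inside any prescribed neighbourhood with its governing points also inside); on the other hand, the fact that $X$ is covered by timelike $(\leq K)$- (resp.\ $(\geq K)$-)comparison neighbourhoods, which by definition are open. So fix a point $x \in X$ and an arbitrary neighbourhood $W$ of $x$. We want to produce a timelike diamond $I(x_-,x_+)$ with $x \in I(x_-,x_+) \subseteq W$ which is itself a comparison neighbourhood.

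First I would pick a comparison neighbourhood $\tilde U$ of $x$ (it exists by the covering hypothesis) and intersect it with $W$; since comparison neighbourhoods are open and $W$ is a neighbourhood, $\tilde U \cap W$ is still a neighbourhood of $x$. Then apply Proposition \ref{pop: tl dia basis} to produce $x_- \ll x \ll x_+$ with $x \in I(x_-,x_+)$ and $x_-,x_+ \in I(x_-,x_+) \subseteq \tilde U \cap W$. Set $U := I(x_-,x_+)$. It remains to check that $U$ is a $(\leq K)$- (resp.\ $(\geq K)$-)comparison neighbourhood in the sense of Definition \ref{TLCB}. For property (i): $(U \times U)\cap\tau^{-1}([0,D_K))$ equals $(U\times U)\cap \big((\tilde U\times\tilde U)\cap\tau^{-1}([0,D_K))\big)$, which is the intersection of the open set $U\times U$ with the set $(\tilde U\times\tilde U)\cap\tau^{-1}([0,D_K))$ that is open by (i) for $\tilde U$; hence it is open, and $\tau$ is continuous there by restriction. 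For property (ii): if $p,q \in U$ with $p \ll q$ and $\tau(p,q) < D_K$, then since $U \subseteq \tilde U$ there is a geodesic joining them contained in $\tilde U$; but timelike diamonds are causally convex, so any causal curve from $p$ to $q$ — in particular such a geodesic — stays in $J(p,q) \subseteq J(x_-,x_+) \subseteq \overline{I(x_-,x_+)}$. Here one must be slightly careful: causal convexity gives the geodesic in $J(x_-,x_+)$, and to land it in the \emph{open} diamond $U = I(x_-,x_+)$ one uses that its endpoints $p,q$ are timelike related to $x_-$ and $x_+$ respectively (as $p,q \in I(x_-,x_+)$), so every intermediate point $r$ satisfies $x_- \ll p \le r \le q \ll x_+$, whence $x_- \ll r \ll x_+$, i.e.\ $r \in I(x_-,x_+) = U$. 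For property (iii): any timelike triangle in $U$ is a timelike triangle in $\tilde U$, so the comparison inequality is inherited directly from $\tilde U$.

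The argument in the $(\geq K)$ case is verbatim the same, as none of the steps (basis of diamonds, causal convexity, openness of $\tau^{-1}([0,D_K))$, inheritance of the triangle comparison inequality) distinguishes the direction of the bound. Note that this is essentially the construction already carried out in the proof of Lemma \ref{lem: autoSizeBounds}, the only additional point being the extra freedom to shrink into the prescribed neighbourhood $W$, which is exactly what the ``containing neighbourhood'' clause of Proposition \ref{pop: tl dia basis} provides; so one could alternatively just cite that proof.

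I do not expect a genuine obstacle here — the statement is a packaging of Proposition \ref{pop: tl dia basis} with the openness and convexity facts. The only place demanding a little care is the second bullet above, i.e.\ verifying that a joining geodesic guaranteed to live in $\tilde U$ actually lies in the \emph{open} diamond $U$; this needs the causal convexity of timelike diamonds together with the observation that points of an open diamond are strictly timelike related to its governing points, so that intermediate causal points inherit strict timelike relations and hence membership in the open diamond. Everything else is routine restriction and inheritance.
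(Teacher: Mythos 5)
Your proof is correct and follows essentially the same route as the paper: pick a comparison neighbourhood, use Proposition \ref{pop: tl dia basis} to carve out a timelike diamond inside it (and inside the prescribed neighbourhood), and verify the three properties of Definition \ref{TLCB} by restriction and causal convexity. You are more explicit than the paper about why a joining geodesic lands in the \emph{open} diamond rather than merely in $J(x_-,x_+)$ — the observation that $x_- \ll p \le r \le q \ll x_+$ forces $x_- \ll r \ll x_+$ — but this is exactly the content the paper compresses into ``follows by causal convexity, as in Lemma \ref{lem: autoSizeBounds}.''
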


\begin{proof}
Let $x \in X$ and let $U$ be a comparison neighbourhood of $x$. Any timelike diamond $D:=I(p,q)$ containing $x$ and contained in $U$ (we can even assume $p,q \in U$) is a comparison neighbourhood: Indeed, points \emph{(i)} and \emph{(iii)} of Definition \ref{TLCB} are directly inherited when passing to open subsets, and \emph{(ii)} follows by causal convexity of $D$, as in the proof of Lemma \ref{lem: autoSizeBounds}.

As $X$ is strongly causal and non-timelike local isolating, Proposition \ref{pop: tl dia basis} yields that timelike diamonds form a neighbourhood basis. Hence, for each neighbourhood $V$ of $x$, there exists a timelike diamond containing $x$, which is contained in $U\cap V$. By the above, such a timelike diamond is also a comparison neighbourhood.
\end{proof}

\begin{defi}[Geodesic map]
Let $X$ be a 
uniquely geodesic and regular Lorentzian pre-length space. 
Viewing the timelike relation $\ll$ as a subset of $X \times X$, the \emph{geodesic map}\newfoot{This is closely related to the \emph{line-of-sight map} of \citet*[Definition 9.32]{AKP19}.} of $X$ is formally defined as 
\begin{equation}
G: {\ll} \times [0,1] \to X\,,\quad G(x,y,t):= \gamma_{xy}(t)\,.
\end{equation}
We say that \emph{geodesics vary continuously} if $G$ is continuous. \end{defi}

The definition above appears to be a different notion of continuous variation of geodesics to that required in Theorem \ref{thm: alex patch}. As it turns out, however, they are equivalent (note that of course in the Lorentzian formulation of the version used in Theorem \ref{thm: alex patch} we must restrict to sequences and limits of timelike related points). The following proposition does not make use of $\tau$ at all, but we still formulate it in the Lorentzian context.

\begin{pop}[Equivalent notions of continuously varying geodesics]
Let $X$ be a uniquely geodesic and regular \LpLS and assume $\tau^{-1}([0,D_K))$ is open. 
Then \\ $G|_{\tau^{-1}((0,D_K))\times[0,1]}$ is continuous if and only if (timelike) geodesics with length less than $D_K$ vary continuously in the sense of Theorem \ref{thm: alex patch}. 
\end{pop}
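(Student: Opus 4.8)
The plan is to prove the two implications separately, unwinding both definitions of ``continuously varying'' into statements about sequences of timelike related pairs and their connecting geodesics. The ``sequential'' notion from Theorem \ref{thm: alex patch}, transported to the Lorentzian setting, says: whenever $x_n \to x$, $y_n \to y$ with $x_n \ll y_n$, $x \ll y$, and all the relevant $\tau$-values below $D_K$, the constant-speed geodesics $\gamma_{x_n y_n}$ converge uniformly to $\gamma_{xy}$ on $[0,1]$. Continuity of $G|_{\tau^{-1}([0,D_K))\times[0,1]}$ says: whenever $(x_n,y_n,t_n) \to (x,y,t)$ in the domain, $\gamma_{x_n y_n}(t_n) \to \gamma_{xy}(t)$. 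Since everything in sight is metrizable (the underlying space $(X,d)$ is a metric space, and $\tau^{-1}([0,D_K))\times[0,1]$ is an open, hence metrizable, subset of $X\times X\times[0,1]$), continuity is equivalent to sequential continuity, so I may argue exclusively with sequences.

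\textbf{($\Leftarrow$) Uniform convergence implies continuity of $G$.} Suppose geodesics vary continuously in the sense of Theorem \ref{thm: alex patch}, and let $(x_n,y_n,t_n)\to(x,y,t)$ in $\tau^{-1}([0,D_K))\times[0,1]$. Then in particular $x_n\to x$, $y_n\to y$ are timelike related pairs with $\tau<D_K$, so by hypothesis $\gamma_{x_n y_n}\to\gamma_{xy}$ uniformly on $[0,1]$. Now estimate
\begin{equation}
d\big(\gamma_{x_n y_n}(t_n),\gamma_{xy}(t)\big) \leq d\big(\gamma_{x_n y_n}(t_n),\gamma_{xy}(t_n)\big) + d\big(\gamma_{xy}(t_n),\gamma_{xy}(t)\big).
\end{equation}
The first term is bounded by $\sup_{s\in[0,1]} d(\gamma_{x_n y_n}(s),\gamma_{xy}(s))\to 0$ by uniform convergence; the second term tends to $0$ because $\gamma_{xy}$ is a (locally Lipschitz, hence) continuous curve and $t_n\to t$. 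Hence $G(x_n,y_n,t_n)\to G(x,y,t)$, so $G$ is sequentially continuous, and therefore continuous, on the stated domain.

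\textbf{($\Rightarrow$) Continuity of $G$ implies uniform convergence.} Conversely, assume $G|_{\tau^{-1}([0,D_K))\times[0,1]}$ is continuous, and let $x_n\to x$, $y_n\to y$ with $x_n\ll y_n$, $x\ll y$, and all $\tau$-values $<D_K$. I want $\sup_{t\in[0,1]} d(\gamma_{x_n y_n}(t),\gamma_{xy}(t))\to 0$. The standard compactness trick: if not, there is $\varepsilon>0$, a subsequence (not relabelled), and $t_n\in[0,1]$ with $d(\gamma_{x_n y_n}(t_n),\gamma_{xy}(t_n))\geq\varepsilon$. By compactness of $[0,1]$, pass to a further subsequence with $t_n\to t\in[0,1]$. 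Then $(x_n,y_n,t_n)\to(x,y,t)$, which lies in the domain of $G$ (here I use that $\tau^{-1}([0,D_K))$ is open, so $(x,y)$ is genuinely an interior point and the whole tail of the sequence lies in the domain), so continuity of $G$ gives $\gamma_{x_n y_n}(t_n)=G(x_n,y_n,t_n)\to G(x,y,t)=\gamma_{xy}(t)$. On the other hand, by continuity of $\gamma_{xy}$ and $t_n\to t$, also $\gamma_{xy}(t_n)\to\gamma_{xy}(t)$. Combining, $d(\gamma_{x_n y_n}(t_n),\gamma_{xy}(t_n))\to 0$, contradicting the choice $\geq\varepsilon$. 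Hence the convergence is uniform.

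\textbf{Main obstacle.} The routine estimates above are entirely standard; the only genuine subtlety is bookkeeping around the domain. One must make sure that when $x_n\to x$, $y_n\to y$ with $\tau(x,y)<D_K$, the pairs $(x_n,y_n,t)$ actually lie in $\tau^{-1}([0,D_K))\times[0,1]$ (so that $\gamma_{x_n y_n}$ is even defined and $G$ applies) — this is exactly why the hypothesis that $\tau^{-1}([0,D_K))$ is open is invoked, together with unique geodesicity and regularity, which guarantee $G$ is a well-defined single-valued map on $\ll\times[0,1]$ to begin with. Beyond that, the proof is a direct translation of the metric argument, with the only change being the restriction to timelike related points and $\tau$-values below $D_K$, as flagged in the statement.
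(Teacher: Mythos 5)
Your proposal is correct and follows essentially the same two-step argument as the paper: the $(\Leftarrow)$ direction uses the identical triangle-inequality estimate, and the $(\Rightarrow)$ direction uses compactness of $[0,1]$ together with continuity of $G$ and of $\gamma_{xy}$ (you phrase it as a proof by contradiction with a nearly-maximizing parameter, while the paper argues directly with the maximizing parameter $s_n = \argmax f_n$, but the content is the same). Both proofs invoke the openness of $\tau^{-1}([0,D_K))$ in exactly the same place, to ensure the tail $(x_n,y_n)$ lies in the domain of $G$.
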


\begin{proof}
First assume we have continuously varying geodesics in the sense of Theorem \ref{thm: alex patch}. 
Note that since geodesics are continuous by definition, we know that $G$ is continuous in $t$. We have to show $G(x_n,y_n,t_n) \to G(x,y,t)$ for sequences $x_n \to x, y_n \to y, t_n \to t, x_n \ll y_n, x \ll y, \tau(x,y)<D_K, \tau(x_n,y_n)<D_K$, i.e., $\gamma_{x_n y_n}(t_n) \to \gamma_{xy}(t)$. We have
$d(\gamma_{x_n y_n}(t_n), \gamma_{xy}(t)) \leq d(\gamma_{x_n y_n}(t_n), \gamma_{xy}(t_n)) + d(\gamma_{xy}(t_n), \gamma_{xy}(t))$ and conclude that the right hand goes to 0 using the uniform convergence $\gamma_{x_n y_n} \to \gamma_{xy}$ and the fact that geodesics are continuous. 

Conversely, suppose (the suitable restriction of) $G$ is continuous. Given sequences $x_n \to x, y_n \to y$ with $\tau(x,y)<D_K$, note that by the openness of $\tau^{-1}([0,D_K))$, we also have $\tau(x_n,y_n)<D_K$ for large enough $n$. 
Note that the set $(\Set*{(x_n,y_n) \given n\in\N }\cup\{(x,y)\})\times[0,1]$ is compact. 
Thus, $G$ restricted to this set is uniformly continuous. 
In particular, the sequence of curves $\gamma_{x_ny_n}=G(x_n,y_n,\cdot)$ is equi-continuous, and as $x_n\to x$ it is also uniformly bounded. Thus, we can apply the Arzelà--Ascoli theorem to get a uniform limit curve $\beta$. By continuity of $G$, we have that $\beta(t)=\lim_n\gamma_{x_ny_n}(t) = \lim_nG(x_n,y_n,t)=G(x,y,t)=\gamma_{xy}(t)$, so the uniform limit is $\gamma_{xy}$.
\end{proof}

\begin{thm}[Alexandrov's Patchwork Globalization, Lorentzian version]
\label{thm: Lor AlexPatch}
Let $X$ be a strongly causal, non-timelike locally isolating, and regular \LpLS which has (local) timelike curvature bounded above by $K\in\mb{R}$. Assume that $X$ satisfies \emph{(i)} and \emph{(ii)} in Definition \ref{TLCB} and that geodesics between timelike related points in $X$ with $\tau$-distance less than $D_K$ are unique.\newfoot{I.e. $\tau^{-1}([0,D_K))$ is open and $\tau$ is continuous on that set, and for all $x \ll y$ in $X$ with $\tau(x,y) < D_K$, there exists unique a geodesic joining them.} Let $G$ be the geodesic map of $X$ restricted to the set
\begin{equation*}
\Set*{ (x,y,t)\in{\ll}\times[0,1] \given \tau(x,y) < D_K}=\tau^{-1}((0,D_K))\times[0,1] 
\end{equation*}
and assume that $G$ is continuous. 
Then $X$ also satisfies Definition \ref{TLCB}.\emph{(iii)}, in particular $X$ has global curvature bounded above by $K$. 
\end{thm}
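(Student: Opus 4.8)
The plan is to mirror the metric proof of Alexandrov's Patchwork (Theorem \ref{thm: alex patch}) almost verbatim, using the Gluing Lemmas for timelike triangles (Lemma \ref{gluinglemma} and Lemma \ref{gluinglemmaRest}) as the substitute for the metric gluing step, and Proposition \ref{pop: tl dia nhood basis comp nhood} as the source of a nice cover by comparison neighbourhoods. Concretely, let $\Delta(x,y,z)$ be an arbitrary timelike triangle in $X$ satisfying size bounds, i.e.\ $\tau(x,z) < D_K$. Since all three sides are compact (as continuous images of $[0,1]$), and since by Proposition \ref{pop: tl dia nhood basis comp nhood} every point of $X$ lies in a timelike diamond which is a $(\leq K)$-comparison neighbourhood, a standard Lebesgue-number/compactness argument produces an $\varepsilon > 0$ such that any sub-triangle of $\Delta(x,y,z)$ whose vertices are pairwise within $d$-distance $\varepsilon$ lies inside one such comparison diamond. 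The goal is then to subdivide $\Delta(x,y,z)$ into finitely many sufficiently small timelike sub-triangles, verify Definition \ref{TLCB}.\emph{(iii)} for each (it holds, since each sits in a $(\leq K)$-comparison neighbourhood), and glue them back up.

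The key geometric input is that the subdivision must be carried out through \emph{timelike} sub-triangles, so that the Gluing Lemmas apply. Here is where the hypotheses on $X$ — uniqueness and continuous variation of geodesics below $D_K$, regularity — enter. First I would subdivide one side, say $\gamma_{xz}$, at points $x = p_0 \ll p_1 \ll \dots \ll p_n = z$; since $X$ is regular and $\gamma_{xz}$ is timelike, consecutive $p_i$ are timelike related, and by continuity of $G$ (and of $\tau$ on $\tau^{-1}([0,D_K))$) each geodesic $\gamma_{p_i y}$ varies continuously, so for a fine enough partition every ``fan'' triangle $\Delta(p_{i}, y, p_{i+1})$ (with the middle cevian $\gamma_{p_i y}$) is timelike and small. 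One must check that the $p_i$ are indeed timelike to $y$: this follows because $p_i$ lies on $\gamma_{xz}$, the cevian $\gamma_{p_i z}$ together with $\gamma_{zy}$ and $\gamma_{yx}$ gives a timelike configuration, and regularity forces $p_i \ll y$ (alternatively one invokes the comparison property of the original neighbourhood structure for the coarse triangle, but the cleanest route is to first split $\gamma_{xz}$ finely enough that each $\Delta(x, p_i, y)$ is timelike by a continuity-of-geodesics argument bootstrapped from $\Delta(x,z,y)$). Then each fan triangle $\Delta(p_i,y,p_{i+1})$ is itself subdivided along, say, its side from $p_i$ to $y$ into finitely many small timelike sub-triangles, again using continuous variation of the cevians and regularity to keep everything timelike; iterate until every piece lies in a comparison diamond. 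Finally, reassemble: each atomic piece satisfies Definition \ref{TLCB}.\emph{(iii)}; Lemma \ref{gluinglemmaRest} (cevian from a side-point, Case II) glues the pieces of a single fan triangle into the statement for $\Delta(p_i,y,p_{i+1})$; and Lemma \ref{gluinglemma} (cevian from $y$ to a point on $\gamma_{xz}$, Case I) glues the fan triangles $\Delta(p_0,y,p_1), \Delta(p_1,y,p_2), \dots$ successively into the statement for $\Delta(x,y,z)$. Since $\Delta(x,y,z)$ was an arbitrary size-bounded timelike triangle and $X$ already satisfies (i) and (ii) of Definition \ref{TLCB} by hypothesis, $X$ is a $(\leq K)$-comparison neighbourhood, i.e.\ $X$ has global timelike curvature bounded above by $K$.

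The main obstacle I anticipate is purely bookkeeping-geometric rather than conceptual: ensuring at every stage of the subdivision that the sub-triangles produced are genuinely \emph{timelike} (all three vertices pairwise timelike related), so that the Gluing Lemmas — which are stated only for timelike triangles — are applicable, and simultaneously that all intermediate triangles still satisfy the size bound $\tau < D_K$ (which is automatic, since sub-triangles of a size-bounded triangle have shorter longest side, by the reverse triangle inequality $\tau(x,z) \ge \tau(x,p) + \tau(p,z)$). The timelike-ness is delicate precisely because a priori a geodesic between timelike-related points could be null in places; this is exactly what \emph{regularity} rules out, and is the reason it appears among the hypotheses. A secondary subtlety is that the Gluing Lemmas require the ambient open set to satisfy (i) and (ii) of Definition \ref{TLCB}; we apply them with $U = X$, which is legitimate precisely because of the standing hypothesis that $X$ satisfies (i) and (ii) globally. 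One should also remark — as in the metric case — that continuous variation of geodesics guarantees the cevians $\gamma_{p_i y}$ depend continuously on the subdivision, which is what makes ``fine enough partition'' meaningful and lets the compactness argument go through uniformly.
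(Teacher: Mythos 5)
Your high-level plan is right—triangulate the big triangle into pieces small enough to sit in comparison diamonds and reassemble with the Gluing Lemmas—but the concrete subdivision you describe cannot be carried out, and the part you dismiss as ``bookkeeping'' is exactly where the real work of the paper's proof lies.

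The central error is the direction of the fan. You subdivide $\gamma_{xz}$ at $x = p_0 \ll p_1 \ll \dots \ll p_n = z$ and connect each $p_i$ to $y$ by $\gamma_{p_i y}$, forming fan triangles $\Delta(p_i,y,p_{i+1})$. But for a generic non-degenerate timelike triangle, most points of the long side $\gamma_{xz}$ are \emph{not} timelike related to the opposite vertex $y$ at all. Already in $2$-dimensional Minkowski space: take $x=(0,0)$, $z=(2,0)$, $y=(1,\tfrac12)$; the point $p=(1,0) \in \gamma_{xz}$ is spacelike to $y$. So the cevian $\gamma_{p_i y}$ simply does not exist as a causal curve, and the fan triangle $\Delta(p_i,y,p_{i+1})$ is not a timelike triangle. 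Regularity (Definition~\ref{def: regular}) cannot rescue this, since it only says that geodesics between points which \emph{are} timelike related are timelike; it has no bearing on whether $p_i$ and $y$ are timelike related in the first place—your appeal to it here is circular. Moreover the chain $p_0 \ll p_1 \ll \dots \ll p_n$ cannot simultaneously have $p_i \ll y \ll p_{i+1}$ for every $i$ (at most one index can straddle $y$), so even the ordering you assume in $\Delta(p_i,y,p_{i+1})$ is internally inconsistent. The paper avoids this by fanning from $x$ to points $\gamma_{yz}(t)$ on the \emph{far} side $\gamma_{yz}$: since $x \ll y \preceq \gamma_{yz}(t)$, every cevian $\beta_t = \gamma_{x\,\gamma_{yz}(t)}$ is automatically timelike. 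The correct cevians correspond to Case~II of the Gluing Lemma (Lemma~\ref{gluinglemmaRest}), not Case~I as your configuration would require.

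The second gap is the Lebesgue-number step. Even after the fan is oriented correctly, an arbitrary $\varepsilon$-fine cover by comparison diamonds does \emph{not} yield a subdivision into timelike sub-triangles each sitting in a single comparison neighbourhood—this is precisely the caveat the paper states right after the proof (``even if we use timelike diamonds, it is generally not true that we can achieve a subdivision consisting of timelike triangles such that each sub-triangle is contained within a comparison neighbourhood''). The paper instead builds, for each $\beta_t$, a finite chain of comparison diamonds whose governing points are chosen to lie \emph{on} $\beta_t$, ordered so that only consecutive diamonds overlap; then, for neighbouring fan geodesics $\beta_{\tilde t_l}$ and $\beta_{t_{l+1}}$, it uses openness of $\ll$ and causal convexity to produce, inside each overlap region, a timelike quadrilateral which splits into two timelike triangles. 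Only then do the Gluing Lemmas apply. Your proposal asserts the existence of a fine enough subdivision ``by a continuity-of-geodesics argument'' but gives no mechanism for keeping all the pieces timelike and inside single comparison diamonds simultaneously; this is exactly the nontrivial construction that distinguishes the Lorentzian proof from the metric one.

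Minor point: your remark that the size bound $\tau < D_K$ is preserved automatically under subdivision is correct, via the reverse triangle inequality, and this does match the paper.
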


\begin{proof}
By our assumptions, it is only left to show triangle comparison. 
Let $\Delta(x,y,z)$ be a timelike triangle in $X$. 
Given $t \in [0,1]$, let $\beta_t \coloneqq \gamma_{x \gamma_{yz}(t)}=G(x,\gamma_{yz}(t),\cdot): [0,1] \to X$ be the geodesic from $x$ to $\gamma_{yz}(t)$. By the continuity of $G$, we can regard the map $F(s,t):=\beta_t(s)$ as a geodesic variation with starting point $x$ that ``spans'' the timelike triangle $\Delta(x,y,z)$. 
In particular, this ``filled in'' triangle is compact as the continuous image under $F$ of the compact set $[0,1] \times [0,1]$. 
Fix $t \in [0,1]$. 
For each $s \in [0,1]$, we find a timelike diamond $I(x_s,y_s)$ that is a comparison neighbourhood of $\beta_t(s)$, cf. Proposition \ref{pop: tl dia basis} and Proposition \ref{pop: tl dia nhood basis comp nhood}. 
Since $\beta_t$ is continuous, there is a neighbourhood $N_s$ of $s$ in $[0,1]$ such that $\beta_t(N_s) \subseteq I(x_s,y_s)$.
In particular, for $s \in (0,1)$, we find $s^- < s < s^+$ in $N_s$. By the causal convexity of diamonds, we then obtain that $I_s:=I(\beta_t(s^-),\beta_t(s^+)) \subseteq I(x_s,y_s)$ is also a comparison neighbourhood of $\beta_t(s)$. The point is that we can choose the comparison neighbourhood diamonds in such a way that the governing points are situated on the geodesic. 
For the parameters $0$ and $1$ this will not be possible as these are the endpoints of the geodesic, however, we may still force one of the governing points to be on $\beta_t$, i.e., we set $I_0:=I(x_0,\beta_t(0^+))$ and $I_1:=I(\beta_t(1^-),y_1)$. 
Clearly, $\bigcup_s I_s$ is an open cover of $\beta_t([0,1])$. By compactness, we can extract a finite subcover\newfoot{Due to the manner in which we choose the governing points of each $I_s$, the first and the last diamonds $I_0$ and $I_1$ are always included (because the endpoints are not inside any other $I_s$).} say $\bigcup_{k=0}^n I_{s_k} \supseteq \beta_t([0,1])$.
Now order these diamonds with respect to, say, (the parameters of) their future governing points, i.e., $s_k^+ < s_{k+1}^+$ for all $k$. 
Further assume that the cover is minimal in the sense that no diamond can be removed from the cover; in particular, no diamond is entirely contained inside another one. 
This then immediately implies that the bottom governing points are ordered similarly and that subsequent diamonds overlap and only subsequent ones do so, i.e., 
\begin{equation}
I_{s_i} \cap I_{s_j} \neq \emptyset \iff |i-j| \leq 1.
\end{equation}
Clearly, $F(\cdot,t)=\beta_t$. 
Since $F$ is continuous and $\bigcup_{k=0}^n I_{s_k}$ is a neighbourhood of $\beta_t([0,1])$, it follows that there exists an open neighbourhood of $t$, denote it by $J_t$, such that $F([0,1],J_t) \subseteq \bigcup_{k=0}^n I_{s_k}$. By shrinking $J_t$ if necessary, we can assume that $\gamma_{yz}(J_t) \subseteq I_1$. 
Visually, $\bigcup_{k=0}^n I_{s_k}$ covers $\beta_{t'}$ for all $t'$ in a neighbourhood of $t$ and each $\beta_{t'}$ ends in the top diamond $I_1$.

Now we let $t$ vary: doing the above described procedure for each $t \in [0,1]$, we end up with an open cover $\bigcup_t J_t$\newfoot{Similarly to $I_0$ and $I_1$ from above, the sets $J_0$ and $J_1$ contain $0$ and $1$ at the boundary, respectively. Visually, this means that at the edges of the original triangle, nearby geodesics can only be ``on one side'' of these edges.} of $[0,1]$. 
Again by a compactness argument, we can extract a finite subcover from $\bigcup_t J_t$, say $\bigcup_{l=0}^m J_{t_l} \supseteq [0,1]$. 
Order these set in the same way as the diamonds, i.e., increasing with respect to, say, the right endpoint, and remove unnecessary ones. Then the left endpoints are also ordered in an increasing fashion, subsequent sets overlap, and these are the only pairs of sets which overlap. 
In total, we obtain, modifying the above notation slightly, that
\begin{equation}
\bigcup_{l=0}^m \bigcup_{k=0}^{n_l} I_{s_k}^{t_l} \supseteq F([0,1],[0,1]),
\end{equation}
where $I_s^t$ is the diamond ``around'' $\beta_t(s)$, i.e., the $t$ emphasizes that this diamond belongs to the cover of $\beta_t$. 
The reward for this tedious construction is now the following: 
The triangle may be viewed as a fan consisting of $m$ pieces, and the covers of subsequent ``fan-geodesics'' share some geodesics between them. 
More precisely, as $J_{t_l} \cap J_{t_{l+1}} \neq \emptyset, l=0,\ldots, m-1$, and all geodesics ending in $\gamma_{yz}(J_{t_l})$ are contained in $\bigcup_{k=0}^{n_l} I_{s_k}^{t_l}$ (and the same for $l+1$), we know there exists some $\tilde{t}_l$ such that 
\begin{equation}
\beta_{\tilde{t}_l}([0,1]) \subseteq (\bigcup_{k=0}^{n_l} I_{s_k}^{t_l}) \cap (\bigcup_{k=0}^{n_{l+1}} I_{s_k}^{t_{l+1}}).
\end{equation}
A sketch of this process is depicted in Figure \ref{fig: tr covering}.
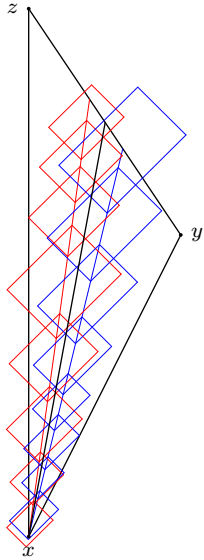
\begin{figure}
\begin{center}
\begin{tikzpicture}[line cap=round,line join=round,>=triangle 45,x=1cm,y=1cm]
\draw [line width=0.5pt] (0,0)-- (0,7);
\draw [line width=0.5pt] (0,0)-- (2,4);
\draw [line width=0.5pt] (2,4)-- (0,7);
\draw [line width=0.3pt,color=red] (0,0)-- (0.8004284450041361,5.799357332493796);
\draw [line width=0.3pt,color=blue] (0,0)-- (1.2376758244509984,5.143486263323502);
\draw [line width=0.3pt,color=red] (0.8267279802380822,5.989905786202704)-- (1.2533513419560691,5.563282424484717);
\draw [line width=0.3pt,color=red] (1.2533513419560691,5.563282424484717)-- (0.6901061231530742,5.000037205681722);
\draw [line width=0.3pt,color=red] (0.8267279802380822,5.989905786202704)-- (0.2634827614350872,5.4266605673997095);
\draw [line width=0.3pt,color=red] (0.2634827614350872,5.4266605673997095)-- (0.6901061231530742,5.000037205681722);
\draw [line width=0.3pt,color=red] (0.761762617575091,5.519211179246564)-- (1.2309613721927488,5.050012424628905);
\draw [line width=0.3pt,color=red] (1.2309613721927488,5.050012424628905)-- (0.6115064174933627,4.43055746992952);
\draw [line width=0.3pt,color=red] (0.6115064174933627,4.43055746992952)-- (0.14230766287570473,4.899756224547177);
\draw [line width=0.3pt,color=red] (0.14230766287570473,4.899756224547177)-- (0.761762617575091,5.519211179246564);
\draw [line width=0.3pt,color=red] (0.6773254792501515,4.907437396923022)-- (1.1917780245843772,4.392984851588796);
\draw [line width=0.3pt,color=red] (1.1917780245843772,4.392984851588796)-- (0.5125772061067496,3.713784033111169);
\draw [line width=0.3pt,color=red] (0.5125772061067496,3.713784033111169)-- (-0.0018753392274764025,4.228236578445395);
\draw [line width=0.3pt,color=red] (-0.0018753392274764025,4.228236578445395)-- (0.6773254792501515,4.907437396923022);
\draw [line width=0.3pt,color=red] (0.5696404732724808,4.127225458036948)-- (1.2163555248037299,3.4805104065056987);
\draw [line width=0.3pt,color=red] (1.2163555248037299,3.4805104065056987)-- (0.3625364577089074,2.626691339410876);
\draw [line width=0.3pt,color=red] (0.3625364577089074,2.626691339410876)-- (-0.284178593822342,3.2734063909421254);
\draw [line width=0.3pt,color=red] (-0.284178593822342,3.2734063909421254)-- (0.5696404732724808,4.127225458036948);
\draw [line width=0.3pt,color=red] (0.4081071475283464,2.9568654053135415)-- (0.9114482780555583,2.4535242747863295);
\draw [line width=0.3pt,color=red] (0.9114482780555583,2.4535242747863295)-- (0.24691719364730963,1.7889931903780807);
\draw [line width=0.3pt,color=red] (0.24691719364730963,1.7889931903780807)-- (-0.2564239368799024,2.292334320905293);
\draw [line width=0.3pt,color=red] (-0.2564239368799024,2.292334320905293)-- (0.4081071475283464,2.9568654053135415);
\draw [line width=0.3pt,color=red] (0.2745134046639475,1.9889364703892725)-- (0.7040901399591893,1.5593597350940307);
\draw [line width=0.3pt,color=red] (0.7040901399591893,1.5593597350940307)-- (0.1369457592818206,0.9922153544166621);
\draw [line width=0.3pt,color=red] (0.1369457592818206,0.9922153544166621)-- (-0.29263097601342114,1.4217920897119039);
\draw [line width=0.3pt,color=red] (-0.29263097601342114,1.4217920897119039)-- (0.2745134046639475,1.9889364703892725);
\draw [line width=0.3pt,color=red] (0.15475112854538114,1.121220888191584)-- (-0.2444346269675067,0.7220351326786961);
\draw [line width=0.3pt,color=red] (-0.2444346269675067,0.7220351326786961)-- (0.05792385435039467,0.4196766513607948);
\draw [line width=0.3pt,color=red] (0.05792385435039467,0.4196766513607948)-- (0.4571096098632824,0.8188624068736826);
\draw [line width=0.3pt,color=red] (0.4571096098632824,0.8188624068736826)-- (0.15475112854538114,1.121220888191584);
\draw [line width=0.3pt,color=red] (0.06519261675224118,0.47234113473392886)-- (-0.2875610229305885,0.11958749505109917);
\draw [line width=0.3pt,color=red] (-0.2875610229305885,0.11958749505109917)-- (-0.03257976709328441,-0.1353937607862049);
\draw [line width=0.3pt,color=red] (-0.03257976709328441,-0.1353937607862049)-- (0.3201738725895452,0.21735987889662475);
\draw [line width=0.3pt,color=red] (0.3201738725895452,0.21735987889662475)-- (0.06519261675224118,0.47234113473392886);
\draw [line width=0.3pt,color=blue] (0.14614555857785058,0.6073461710576342)-- (0.39270319510864926,0.36078853452683557);
\draw [line width=0.3pt,color=blue] (0.39270319510864926,0.36078853452683557)-- (-0.005110175142972295,-0.03702483572478599);
\draw [line width=0.3pt,color=blue] (-0.005110175142972295,-0.03702483572478599)-- (-0.25166781167377095,0.20953280080601266);
\draw [line width=0.3pt,color=blue] (-0.25166781167377095,0.20953280080601266)-- (0.14614555857785058,0.6073461710576342);
\draw [line width=0.3pt, color=blue] (1.4342352135642864,5.9603403198208165)-- (2.06882147188915,5.325754061495953);
\draw [line width=0.3pt, color=blue] (2.06882147188915,5.325754061495953)-- (1.0320589775438689,4.2889915671506715);
\draw [line width=0.3pt, color=blue] (1.0320589775438689,4.2889915671506715)-- (0.3974727192190053,4.923577825475535);
\draw [line width=0.3pt, color=blue] (0.3974727192190053,4.923577825475535)-- (1.4342352135642864,5.9603403198208165);
\draw [line width=0.3pt, color=blue] (1.1772051386362063,4.892184480031338)-- (1.7489399470299043,4.32044967163764);
\draw [line width=0.3pt, color=blue] (1.7489399470299043,4.32044967163764)-- (0.814861721606334,3.3863714462140693);
\draw [line width=0.3pt, color=blue] (0.814861721606334,3.3863714462140693)-- (0.2431269132126359,3.958106254607767);
\draw [line width=0.3pt, color=blue] (0.2431269132126359,3.958106254607767)-- (1.1772051386362063,4.892184480031338);
\draw [line width=0.3pt, color=blue] (0.9317702572328499,3.8722155058465506)-- (1.4302454512439144,3.373740311835486);
\draw [line width=0.3pt, color=blue] (1.4302454512439144,3.373740311835486)-- (0.6158559514202454,2.559350812011817);
\draw [line width=0.3pt, color=blue] (0.6158559514202454,2.559350812011817)-- (0.11738075740918097,3.0578260060228817);
\draw [line width=0.3pt, color=blue] (0.11738075740918097,3.0578260060228817)-- (0.9317702572328499,3.8722155058465506);
\draw [line width=0.3pt, color=blue] (0.7006635323699546,2.911790941343757)-- (1.0911512866940638,2.5213031870196474);
\draw [line width=0.3pt, color=blue] (1.0911512866940638,2.5213031870196474)-- (0.45318749079809323,1.883339391123677);
\draw [line width=0.3pt, color=blue] (0.45318749079809323,1.883339391123677)-- (0.06269973647398386,2.2738271454477865);
\draw [line width=0.3pt, color=blue] (0.06269973647398386,2.2738271454477865)-- (0.7006635323699546,2.911790941343757);
\draw [line width=0.3pt, color=blue] (0.519864224217965,2.160432031743459)-- (0.8065242144712333,1.8737720414901906);
\draw [line width=0.3pt, color=blue] (0.8065242144712333,1.8737720414901906)-- (0.33819020530359706,1.4054380323225542);
\draw [line width=0.3pt, color=blue] (0.33819020530359706,1.4054380323225542)-- (0.05153021505032851,1.6920980225758226);
\draw [line width=0.3pt, color=blue] (0.05153021505032851,1.6920980225758226)-- (0.519864224217965,2.160432031743459);
\draw [line width=0.3pt, color=blue] (0.3893583442542978,1.6180806440738975)-- (0.6665032973773406,1.3409356909508547);
\draw [line width=0.3pt, color=blue] (0.6665032973773406,1.3409356909508547)-- (0.2137145880007228,0.8881469815742369);
\draw [line width=0.3pt, color=blue] (0.2137145880007228,0.8881469815742369)-- (-0.06343036512232003,1.1652919346972797);
\draw [line width=0.3pt, color=blue] (-0.06343036512232003,1.1652919346972797)-- (0.3893583442542978,1.6180806440738975);
\draw [line width=0.3pt, color=blue] (0.2546485699922176,1.058258871874572)-- (0.46542001470496647,0.8474874271618231);
\draw [line width=0.3pt, color=blue] (0.46542001470496647,0.8474874271618231)-- (0.12106977722270248,0.5031371896795592);
\draw [line width=0.3pt, color=blue] (0.12106977722270248,0.5031371896795592)-- (-0.08970166749004632,0.713908634392308);
\draw [line width=0.3pt, color=blue] (-0.08970166749004632,0.713908634392308)-- (0.2546485699922176,1.058258871874572);
\draw [line width=0.5pt] (1.0062410789104006,5.490638381634399)-- (0,0);
\begin{scriptsize}
\coordinate [circle, fill=black, inner sep=0.5pt, label=270: {$x$}] (A1) at (0,0);
\coordinate [circle, fill=black, inner sep=0.5pt, label=180: {$z$}] (A1) at (0,7);
\coordinate [circle, fill=black, inner sep=0.5pt, label=0: {$y$}] (A1) at (2,4);
\end{scriptsize}
\end{tikzpicture}
\caption{$J_{t_l}$ ($t_l$ represented by the blue geodesic $\beta_{t_l}$) and $J_{t_{l+1}}$ ($t_{l+1}$ represented by the red geodesic $\beta_{t_{l+1}}$) overlap: $\tilde t_l \in J_{t_l} \cap J_{t_{l+1}}$ ($\tilde t_l$ represented by the black geodesic $\beta_{\tilde t_l}$).}
\label{fig: tr covering}
\end{center}
\end{figure}
We continue with the process of triangulation as follows: given $l$, consider the timelike triangle $\Delta(x, \beta_{\tilde{t}_l}(1), \beta_{t_{l+1}}(1))$. 
By construction, both $\beta_{\tilde{t}_l}$ and $\beta_{t_{l+1}}$ end in $\gamma_{yz}(J_{t_{l+1}}) \subseteq I_{s_{n_{l+1}}}^{t_{l+1}}$ and enter that set via $I_{s_{n_{l+1}}}^{t_{l+1}} \cap I_{s_{{n}_{l+1}-1}}^{t_{l+1}}$, i.e., they pass through the intersection of the ultimate and penultimate diamonds covering $\beta_{t_{l+1}}$. 
In particular, we can choose $\tilde{r}_1$ such that $\beta_{\tilde{t}_l}(\tilde{r}_1)$ is in said intersection. 
Note that the top governing point of the second to last diamond is timelike after the chosen point on $\beta_{\tilde{t}_l}$, i.e., 
\begin{equation}
\beta_{\tilde{t}_l}(\tilde{r}_1) \ll \beta_{t_{l+1}}(s_{n_{l+1}-1}^+). 
\end{equation}

By the openness of $\ll$, we can move a bit below the top governing point and still retain a timelike relation to the chosen point on $\beta_{\tilde{t}_l}$. In particular, both of these points are then contained in the intersection of the last two diamonds, and by the causal convexity also their connecting geodesic is entirely contained therein. 
More precisely, we find $r_1$ such that $\beta_{t_{l+1}}(r_1) \in I_{s_{n_{l+1}}}^{t_{l+1}} \cap I_{s_{{n}_{l+1}-1}}^{t_{l+1}}$ and $\beta_{\tilde{t}_l}(\tilde{r}_1) \ll \beta_{t_{l+1}}(r_1)$. 
Essentially, we constructed a quadrilateral consisting of 
$\beta_{\tilde{t}_l}(\tilde{r}_1), \beta_{t_{l+1}}(r_1), \beta_{\tilde{t}_l}(1)$ and  $\beta_{t_{l+1}}(1)$, which is completely contained in $I_{s_{n_{l+1}}}^{t_{l+1}}$. 
By transitivity of $\ll$, also the ``past most'' and ``future most'' points of this quadrilateral are timelike related, so we can split this into two timelike triangles 
$\Delta(\beta_{\tilde{t}_l}(\tilde{r}_1), \beta_{t_{l+1}}(r_1),\beta_{t_{l+1}}(1))$ and $\Delta(\beta_{\tilde{t}_l}(\tilde{r}_1), \beta_{\tilde{t}_l}(1), \beta_{t_{l+1}}(1))$, see Figure \ref{fig: tr subdivision}.
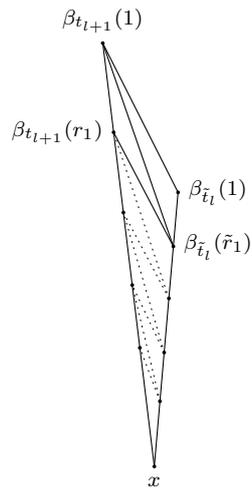
\begin{figure}
\begin{center}
\begin{tikzpicture}[line cap=round,line join=round,>=triangle 45,x=1cm,y=1cm]
\draw  (6,0.5)-- (6.312394086302661,4.134739071633176);
\draw  (6.312394086302661,4.134739071633176)-- (5.322453050287507,6.1132419316678295);
\draw  (5.322453050287507,6.1132419316678295)-- (6,0.5);
\draw (6.2507811308086705,3.4178656528584237)-- (5.322453050287507,6.1132419316678295);
\draw (6.2507811308086705,3.4178656528584237)-- (5.46509675524436,4.931488215129029);
\draw [dotted] (6.191614168646659,2.7294516317564024)-- (5.46509675524436,4.931488215129029);
\draw [dotted] (6.191614168646659,2.7294516317564024)-- (5.593471830172504,3.8679451590006977);
\draw [dotted] (6.129860772635827,2.01094417233817)-- (5.593471830172504,3.8679451590006977);
\draw [dotted] (6.129860772635827,2.01094417233817)-- (5.709622735837503,2.905675113571927);
\draw [dotted] (6.074288230040459,1.364351612690016)-- (5.709622735837503,2.905675113571927);
\draw [dotted] (6.074288230040459,1.364351612690016)-- (5.8100303344904045,2.073832916871458);
\begin{scriptsize}
\coordinate [circle, fill=black, inner sep=0.5pt, label=270: {$x$}] (A1) at (6,0.5);
\coordinate [circle, fill=black, inner sep=0.5pt, label=0: {$\beta_{\tilde{t}_l}(1)$}] (A1) at (6.312394086302661,4.134739071633176);
\coordinate [circle, fill=black, inner sep=0.5pt, label=90: {$\beta_{t_{l+1}}(1)$}] (A1) at (5.322453050287507,6.1132419316678295);
\coordinate [circle, fill=black, inner sep=0.5pt, label=0: {$\beta_{\tilde{t}_l}(\tilde{r}_1)$}] (A1) at (6.2507811308086705,3.4178656528584237);
\coordinate [circle, fill=black, inner sep=0.5pt, label=180: {$\beta_{t_{l+1}}(r_1)$}] (A1) at (5.46509675524436,4.931488215129029);
\coordinate [circle, fill=black, inner sep=0.5pt] (A1) at (6.191614168646659,2.7294516317564024);
\coordinate [circle, fill=black, inner sep=0.5pt] (A1) at (5.593471830172504,3.8679451590006977);
\coordinate [circle, fill=black, inner sep=0.5pt] (A1) at (6.129860772635827,2.01094417233817);
\coordinate [circle, fill=black, inner sep=0.5pt] (A1) at (5.709622735837503,2.905675113571927);
\coordinate [circle, fill=black, inner sep=0.5pt] (A1) at (6.074288230040459,1.364351612690016);
\coordinate [circle, fill=black, inner sep=0.5pt] (A1) at (5.8100303344904045,2.073832916871458);
\end{scriptsize}
\end{tikzpicture}
\caption{The process of subdividing a slim triangle.}
\label{fig: tr subdivision}
\end{center}
\end{figure}
Both of these triangles are entirely contained in the last timelike diamond $I_{s_{n_{l+1}}}^{t_{l+1}}$, so they satisfy the curvature bound by assumption. 
 As $\beta_{\tilde{t}_l}(\tilde{r}_1) \ll \beta_{t_{l+1}}(r_1)$  are also contained in $I_{s_{n_{l+1}-1}}^{t_{l+1}}$ , we can continue this procedure iteratively. After $n_{l+1}-1$ steps, we end up with $\beta_{\tilde{t}_l}(\tilde{r}_{n_{l+1}-1}) \ll \beta_{t_{l+1}}(r_{n_{l+1}-1})$ lying in $I_{s_{1}}^{t_{l+1}}$, where also $x$ lies. 

That is, in the end we have $n_{l+1}-1$ quadrilaterals, each of which we can split into two timelike triangles, and one additional timelike triangle at the bottom ending in $x$. Each of these timelike triangles is contained in one of the comparison neighbourhoods $\Set*{I_{s_{i}}^{t_{l+1}} \given i=1,\cdots,n_{l+1}}$ (i.e.\ the chain of comparison diamonds covering the geodesic $\beta_{t_{l+1}}$ and $\beta_{\tilde{t}_l}$), so satisfies the curvature bound, hence several applications of the Gluing Lemma \ref{gluinglemma} yield that the ``long and slim'' triangle $\Delta(x, \beta_{\tilde{t}_l}(1), \beta_{t_{l+1}}(1))$ satisfies the curvature bound. 
Note that in a triangle of the form $\Delta(x,\beta_{t_{l}}(1), \beta_{\tilde{t}_{l+1}}(1))$ 
the top side has a different time orientation from the point of view of the geodesic $\beta_{t_l}$ around which the covering is centred, but this changes nothing for the above described process.
So we can do this for all of the $2m-1$ long and slim triangles and apply the Gluing Lemma $2m-2$ times to obtain that the original triangle $\Delta(x,y,z)$ obeys the desired curvature bound.
\end{proof}

At first glance, our proof appears to be quite similar to the metric version by \citet*[Proposition II.4.9]{BH99}. There are however, some technical details we have to be wary of. In particular, in an arbitrary covering of the triangle, even if we use timelike diamonds, it is generally not true that we can achieve a subdivision consisting of timelike triangles such that each sub-triangle is contained within a comparison neighbourhood. We have to carefully construct the covering and then construct the sub-triangles in a seemingly complicated way as well. 
\medskip

As with the circle in the metric case (see Example \ref{ex:metricCircle}), it is possible to construct counterexamples to the automatic globalization of upper curvature bounds on Lorentzian pre-length spaces. The following is one such example, where the space has (local) curvature bounded above, but has neither unique nor continuously varying geodesics: 

\begin{ex}[The Lorentzian cylinder]
We set the Lorentzian cylinder to be the spacetime $X=\mb{R}\times \mathbb{S}^1$, i.e., take a strip $\mb{R}\times[0,2\pi]$ in Minkowski space and glue the boundary as depicted by the arrows in Figure \ref{fig: lor circle}(which is not to be confused with the totally vicious cylinder $\mathbb{S}^1 \times \mathbb{R}$!). This space is locally isometric to Minkowski space, hence clearly has (local) timelike curvature bounded above by 0.

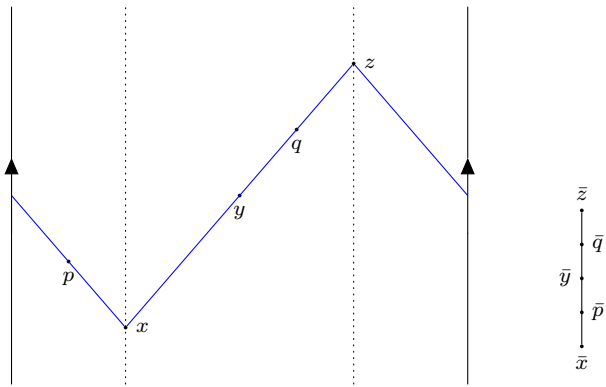
\begin{figure}[ht]
\begin{center}
\begin{tikzpicture}[line cap=round,line join=round,>=triangle 45,x=1cm,y=1cm]
\draw [dotted] (0,-0.5)-- (0,4.5);

\draw [->] (-1.5,1.5) -- (-1.5,2.5);
\draw (-1.5,2.5) -- (-1.5,4.5);
\draw (-1.5,2.5) -- (-1.5,-0.5);

\draw [dotted] (3,-0.5) -- (3,4.5);
\draw [->] (4.5,1.5) -- (4.5,2.5);
\draw (4.5,1.5) -- (4.5,-0.5);
\draw (4.5,2.5) -- (4.5,4.5);
\begin{scriptsize}
\coordinate [circle, fill=black, inner sep=0.5pt, label=0: {$x$}] (x) at (0,0.25);
\coordinate [circle, fill=black, inner sep=0.5pt, label=0: {$z$}] (z) at (3,3.75);
\coordinate [circle, fill=black, inner sep=0.5pt, label=270: {$y$}] (y) at (1.5,2);
\coordinate [circle, fill=black, inner sep=0.5pt, label=270: {$p$}] (p) at (-0.75,1.125);
\coordinate [circle, fill=black, inner sep=0.5pt, label=270: {$q$}] (q) at (2.25,2.875);
\coordinate [circle, fill=black, inner sep=0.5pt, label=270: {$\bar{x}$}] (bx) at (6,0);
\coordinate [circle, fill=black, inner sep=0.5pt, label=90: {$\bar{z}$}] (bz) at (6,{sqrt(3.5^2-3^2)});
\coordinate [circle, fill=black, inner sep=0.5pt, label=180: {$\bar{y}$}] (by) at (6,{sqrt(3.5^2-3^2)/2});
\coordinate [circle, fill=black, inner sep=0.5pt, label=0: {$\bar{p}$}] (bp) at (6,{sqrt(3.5^2-3^2)/4});
\coordinate [circle, fill=black, inner sep=0.5pt, label=0: {$\bar{q}$}] (bq) at (6,{3*sqrt(3.5^2-3^2)/4});
\end{scriptsize}
\draw[color=blue] (x) -- (z);
\draw[color=blue] (x) -- (-1.5,2);
\draw[color=blue] (4.5,2) -- (z);
\draw (bx) -- (bz);
\end{tikzpicture}
\caption{The Lorentzian cylinder. The depicted triangle fails to satisfy an upper curvature bound since its comparison triangle is degenerate. }
\label{fig: lor circle}
\end{center}
\end{figure}

Take two (dotted) vertical lines on the cylinder, which are directly opposite each other (as in Figure \ref{fig: lor circle}). Given two timelike related points, with one point on each of the two vertical lines, there exist precisely two geodesics between these points (wrapping around to the right and to the left on the cylinder, respectively). 
Consider any such pair of points, denoted $x$ and $z$, and the corresponding pair of geodesics, and choose as a third vertex some point $y$ on one of the two geodesics. Then the comparison triangle for $\Delta(x,y,z)$ is clearly degenerate. 
Choosing two points $p$ and $q$ on the two different geodesics (and at different parameters, say $p$ occurs at an earlier parameter than $q$) sufficiently far away from the endpoints of the two geodesics, $p$ and $q$ will not be timelike related, i.e., $\tau(p,q)=0$. However, as the comparison triangle is essentially a line segment and the two points are at different parameters, we clearly have $\tau(\bar{p}, \bar{q}) >0$, violating global upper curvature bounds by $0$. 
\end{ex}

However, as in the metric case, some of our additional assumptions are relatively mild and can be recovered from the global curvature bounds. More specifically, as in Proposition \ref{pop: Elementary Catk} for CAT($k$) spaces, we may show that \LpLSs with global curvature bounded above by $K \in \mb{R}$ automatically possess unique geodesics between points with $\tau$-distance less than $D_K$.

\begin{thm}[Unique geodesics in upper curvature bounds]\label{thm: Lor unique geodesics}
Let $X$ be a strongly causal and regular Lorentzian pre-length space with timelike curvature bounded above by $K \in \mathbb{R}$. Let $x\ll y$ be in a comparison neighbourhood $U \subseteq X$ and suppose $\tau(x,y)<D_K$. Then there exists a unique geodesic from $x$ to $y$ contained in $U$. In particular, if $X$ satisfies a global upper curvature bound, geodesics between timelike related points in $X$ with $\tau$-distance less than $D_K$ are unique. 
\end{thm}

\begin{proof} 
Suppose towards a contradiction that there is another geodesic from $x$ to $y$. Denote the curves corresponding to two of these geodesic segments by $\alpha_1$ and $\alpha_2$, respectively, and parametrize them with constant speed on $[0,1]$. 
Let $p \in \alpha_1((0,1))$ and consider the timelike triangle $\Delta(x,p,y)$, which satisfies size-bounds for $\lm{K}$. Clearly the comparison triangle in $\lm{K}$ is degenerate. As $\alpha_1 \neq \alpha_2$, there exists $t \in (0,1)$ such that $\alpha_1(t) \neq \alpha_2(t)$. 
Then there exist neighbourhoods $V_1$ and $V_2$ of $\alpha_1(t)$ and $\alpha_2(t)$, respectively, such that $V_1 \cap V_2 = \emptyset$.
As $X$ is strongly causal, we find points $x_1,y_1, \ldots, x_n,y_n$ and $p_1,q_1, \ldots, p_m,q_m$ such that $\alpha_1(t) \in U_1:=\bigcap_{i=1}^n I_1(x_i^1,y_i^1) \subseteq V_1$ and $\alpha_2(t) \in U_2 := \bigcap_{j=1}^m I_1(p_j^2,q_j^2) \subseteq V_2$.
By the continuity of $\alpha_1$, there is some neighbourhood $I_1$ of $t$ such that $\alpha_1(I_1) \subseteq U_1$. As $U_1$ is causally convex by definition, all diamonds with endpoints inside $U_1$ are contained in $U_1$. Similarly for $U_2$.
In particular, there exists $\varepsilon >0$ such that $D_1:=I(\alpha_1(t-\varepsilon),\alpha_1(t+\varepsilon)) \subseteq U_1$ and $D_2:=I(\alpha_2(t-\varepsilon),\alpha_2(t+\varepsilon)) \subseteq U_2$.
Then $\alpha_2(t) \notin D_1$ (and vice versa), so 
either $\alpha_1(t-\varepsilon) \not\ll \alpha_2(t)$ or $\alpha_2(t) \not\ll \alpha_1(t+\varepsilon)$. However, in $\lm{K}$ we have $\widebar{\alpha}_1(s)=\widebar{\alpha}_2(s)$ for all $s \in [0,1]$ . In particular, $\widebar{\alpha}_1(t-\varepsilon) \ll \widebar{\alpha}_2(t) \ll \widebar{\alpha}_1(t+\varepsilon)$, a contradiction to upper timelike curvature bounds.
\end{proof}

The astute reader may recall that, in the metric setting, we also automatically have that geodesics between points with distance less than $\diam{(M_k)}$ vary continuously with their endpoints in spaces with global upper curvature bounds, cf.\ Proposition \ref{pop: Elementary Catk}. 
We may do something similar in the synthetic Lorentzian setting under the additional assumption that diamonds between endpoints of $\tau$-distance less than $D_K$ are compact. 
This is, of course, closely related to global hyperbolicity, or rather, depending on the sign of $K$, \emph{global hyperbolicity of order $\sqrt{-K}$} as introduced in \citet*{Har82}. 
In particular, any uniquely geodesic \LLS that is globally hyperbolic (of order $\sqrt{-K}$ if $K<0$) satisfies the assumptions of the following proposition. 

\begin{pop}[Continuous variation of geodesics]
\label{pop: continuous variation of geodesics}
Let $K\in\mb{R}$ and let $X$ be a regular, strongly causal, non-totally imprisoning, locally causally closed and non-timelike locally isolating \LpLS satisfying (i) and (ii) from Definition \ref{TLCB}. 
Suppose that causal diamonds between points of $\tau$-distance less than $D_K$ are compact and that such points possess a unique geodesic connecting them. 
Then $G|_{\tau^{-1}((0,D_K))\times[0,1]}$ is continuous.
\end{pop}
\begin{proof}
Let $x\ll y\in X$ with $\tau(x,y)<D_K$. Let $x_n\to x$ and $y_n\to y$, then we can without loss of generality assume that $x_n\ll y_n$. 
By non-timelike local isolation, openness of $\tau^{-1}([0,D_K))$, and continuity of $\tau|_{\tau^{-1}([0,D_K))}$, there are points $x_-\ll x\ll y\ll y_+$ with $\tau(x_-,y_+)<D_K$, and we can again without loss of generality assume $x_n,y_n$ all lie in the compact set $J(x_-,y_+)\subseteq\tau^{-1}([0,D_K))$. 

As $X$ is non-totally imprisoning, there exist uniform Lipschitz reparamet\-rizations of $\gamma_{x_ny_n}$ on a common domain, cf.\ \citet*[Remark 1.6.28]{Ber20}. Now we use the limit curve theorem (on compact bounded domains) of \citet*[Theorem~3.7]{KS18}, to get a subsequence converging uniformly to a limit curve $\beta$ which connects $x$ to $y$. 

In order to obtain upper semicontinuity of the length functional on $J(x_-,y_+)$ (see \citet*[Proposition~3.17]{KS18}), continuity of $\tau$ on $J(x_-,y_+)$ is sufficient. With this in mind, we follow the proof of \citet*[Theorem~2.23]{BORS23} for a fixed $T$ to obtain that the limit curve $\beta$ is a geodesic from $x$ to $y$.

Since geodesics between points at $\tau$-distance less than $D_K$ are unique, $\beta$ is a reparametrization of $\gamma_{xy}$. As $\tau$ is continuous, $\gamma_{x_ny_n}$ also converges to $\gamma_{xy}$ in the constant speed parametrization on $[0,1]$, proving the claim.

\end{proof}

\begin{rem}[Globalization of continuity]
If we strengthen the requirements from Theorem \ref{thm: Lor AlexPatch} on $X$ and the geodesic map, instead imposing that geodesics between any $x\ll y$ exist and are unique, and that the geodesic map $G$ on its full domain ${\ll} \times [0,1]$ is continuous, it follows that $\tau$ is continuous at any pair of timelike related points. and even finite by \citet*[Lemma 2.25]{KS18}). 
This fact can be viewed as a converse statement to the previous Proposition \ref{pop: continuous variation of geodesics}. 

Indeed, as $X$ has timelike curvature bounded above in the sense of Definition \ref{TLCB}, it follows that $\tau|_{U\times U}$ is continuous for any $U$ from the covering of comparison neighbourhoods constructed in Lemma \ref{lem: autoSizeBounds}.\newfoot{In essence, we pass from local curvature bounds in the sense of Definition \ref{TLCB} to those in the sense of \citet*[Definition 4.7]{KS18}, mimicking localizability and allowing the application of \citet*[Proposition 3.17]{KS18}.} 
By using that $X$ is geodesic (between timelike related points) and slightly adapting the proof of \citet*[Proposition 3.17]{KS18}, it can be shown that $L_{\tau}$ is upper semi-continuous. Combining this with the continuity of $G$, we find $L_{\tau}(G(x,y,\cdot))=L_{\tau}(\gamma_{xy})=\tau(x,y)$, hence $\tau$ is both upper and lower semi-continuous and is therefore continuous on $\ll$. 
In particular, if $\tau$ is continuous on its full domain, then $X$ automatically satisfies condition \emph{(i)} of Definition \ref{TLCB} and we need not assume that it does so in Theorem \ref{thm: Lor AlexPatch}. 
\end{rem}

We conclude this subsection with the following corollary which should be of interest to the smooth spacetime community. The result can be regarded as an analogue to \citet*[Theorem 3.4]{Har82}, which provides a globalization theorem for smooth spacetimes with smooth timelike (sectional) curvature bounded above by $K \in \mathbb{R}$. Here, we instead treat the case where the spacetime has smooth timelike curvature bounded below by $K$. 
As conventions in the synthetic literature were chosen to align with \citet*{AB08}, there is a mismatch between the curvature bounds as discussed in \citet*{Har82} and our work. More precisely, upper curvature bounds in \citet*{Har82} correspond to lower curvature bounds in our setting. See \citet*{BKOR24} for more details.

\begin{cor}[Spacetime globalization]
Let $K \in \mb{R}$ and let $M$ be a smooth spacetime which is globally hyperbolic (of order $\sqrt{-K}$ if $K<0$). 
Suppose that for all $(x,y) \in \tau^{-1}((0,D_K))$ there is a unique (timelike) geodesic joining them. 
Further suppose that $M$ has smooth timelike curvature bounded below by $K$ in the sense of \citet*{BKOR24}, i.e. the sectional curvature of all timelike planes at all points $p \in M$ is bounded below by $K$. 
Then $M$ has global timelike curvature bounded above by $K$ in the sense of Definition \ref{TLCB}. 
\end{cor}
\begin{proof}
A smooth timelike curvature bound from below by $K$ is equivalent to a (synthetic) timelike curvature bound from above by $K$ in the sense of Definition \ref{TLCB}, according to \citet*[Theorems 3.1 and 3.2]{BKOR24}. 
The continuity of the geodesic map $G$ follows by Proposition \ref{pop: continuous variation of geodesics}, hence $M$ satisfies all assumptions to apply Theorem \ref{thm: alex patch}. 
\end{proof}

\subsection{Timelike curvature bounded below}
\label{subsec: TLCBB}

Finally, we show that a bound may be placed on the (finite) diameter (see Definition \ref{def: fin diam}) of a Lorentzian pre-length space with negative lower timelike curvature bound. This result is in the spirit of the Bonnet--Myers Theorem (Theorem \ref{thm: meyers}) from Riemannian geometry However, as for previous results, we need to be careful about the Lorentzian subtleties. 
\bigskip

First, due to the way timelike curvature bounds were introduced by \citet*{KS18},  the hierarchy of curvature bound implications is reversed when compared to the metric setting. 
This was done to keep consistency with the notation introduced in \citet*{AB08}. More precisely, recall that if a metric space has curvature bounded below by some $k$, then it also has curvature bounded below by all $k' \leq k$. Similarly, if it has $k$ as an upper curvature bound, it also has any $k' \geq k$ as an upper curvature bound. 
For timelike curvature bounds in the Lorentzian setting, the inequalities are reversed and any \LpLS possessing timelike curvature bounded below by $K$ does so for all $K' \geq K$, while any \LpLS with timelike curvature bounded above by $K$ does so for all $K' \leq K$ (see \citet*[Lemma 3.27]{BS22} for a proof of this result and \citet*[Example 4.9]{KS18} for an explanation for this choice of convention). Hence, we shall be required to assume a \emph{negative} lower bound, as opposed to a positive one (cf.\ Theorem \ref{thm: meyers}). In addition, due to the behaviour of Anti-de Sitter discussed before Definition \ref{def: fin diam}, we consider the finite diameter of Lorentzian pre-length spaces, rather than the ordinary diameter as in the metric case.
\medskip

Before diving into the theorem proper, we provide a technical lemma which yields a non-degeneracy condition for adjacent timelike sub-triangles. 

\begin{lem}[Non-degeneracy condition] \label{lem: non-deg-cond}
Let $X$ be a strongly causal, locally causally closed, regular, and geodesic \LpLS and let $U$ be a comparison neighbourhood in $X$. Let $a\ll z$ in $U$ and let $\alpha$ be a geodesic in $U$ starting at $a$ and ending at $z$. Let $x=\alpha(t)$ and let $y\in I(x,z)$. Assume that both $\Delta(a,x,y)$ and $\Delta(x,y,z)$ satisfy size bounds. Let $\beta$ be a timelike geodesic starting at $x$ and ending at $y$, and denote by $\alpha_-=\alpha|_{[0,t]}$ and $\alpha_+=\alpha|_{[t,1]}$ the parts of $\alpha$ in the past and future of $x$, respectively. 

\begin{enumerate}
\item If $X$ has timelike curvature bounded below by $K$ and $\Delta(x,y,z)$ is non-degenerate, then $\Delta(a,x,y)$ is also non-degenerate, and $\ma_x(\alpha_+,\beta)$ and $\ma_x(\alpha_-,\beta)$ are equal and positive. 
\item If $X$ has timelike curvature bounded above by $K$ and the angle \linebreak $\ma_x(\alpha_-,\beta)$ exists and $\Delta(a,x,y)$ is non-degenerate, then $\Delta(x,y,z)$ and (if it satisfies size bounds) $\Delta(a,y,z)$ are also non-degenerate, and both $\ma_x(\alpha_\pm,\beta)>0$ though they need not be equal.
\end{enumerate}
\end{lem}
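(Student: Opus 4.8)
The plan is to exploit the monotonicity formulation of curvature bounds (Theorem \ref{thm: monotonicity}) together with the angle comparison results of Theorem \ref{thm: equivalent curv bounds} and Proposition \ref{pop: equal angles along geodesic}, treating the two cases symmetrically where possible. For part (i), the key observation is that $x=\alpha(t)$ lies on the geodesic $\alpha$, so by Proposition \ref{pop: equal angles along geodesic} the angles $\ma_x(\alpha_-,\beta)$ and $\ma_x(\alpha_+,\beta)$ both exist (using \cite[Lemma 4.10]{BS22} for existence) and are equal; call this common value $\theta$. The sign of the angle at $x$ in $\Delta(x,y,b)$ is $-1$ (past- vs.\ future-directed geodesics emanating from $x$), so the signed angle there is $-\theta$. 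First I would show $\theta>0$: if $\theta=0$, then the signed angle comparison inequality (Theorem \ref{thm: equivalent curv bounds}(i)), $\ma_x^S(\alpha_+,\beta)\le \tilde\ma_x^S(\alpha_+(s),\beta(s'))$, together with the fact that for a \emph{degenerate} comparison triangle the comparison angle at $x$ equals $0$, would force all the comparison angles of $\Delta(x,y,b)$ at $x$ to vanish, hence that triangle would itself be degenerate, contradicting the hypothesis. (Concretely: non-degeneracy of $\Delta(x,y,b)$ is equivalent to its comparison triangle in $\lm K$ being genuinely two-dimensional, which is equivalent to a strict hyperbolic triangle inequality, which is equivalent to all three comparison angles being positive.) So $\theta>0$.

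Next I would deduce non-degeneracy of $\Delta(a,x,y)$ from $\theta>0$. The angle of $\Delta(a,x,y)$ at $x$ is the angle between $\alpha_-$ (now viewed as the side from $x$ to $a$) and $\beta$, i.e.\ again $\theta>0$; by the same equivalence used above, a positive comparison angle at a vertex of a timelike triangle forces that triangle to be non-degenerate — here one uses that the \emph{comparison} angle dominates the genuine angle from below is the wrong direction, so instead I would argue directly: if $\Delta(a,x,y)$ were degenerate, its comparison triangle would be a segment, so $\tilde\ma_x(a,y)=0$; but angle comparison (Theorem \ref{thm: equivalent curv bounds}(i)) gives $\theta=\ma_x(\alpha_-,\beta)\le \tilde\ma_x(\alpha_-(s),\beta(s'))\to 0$ as the comparison angles along the degenerate side all vanish, contradicting $\theta>0$. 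This settles part (i).

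For part (ii) the direction of all inequalities reverses, so genuine angles now \emph{dominate} comparison angles. Here non-degeneracy of $\Delta(a,x,y)$ gives a strictly positive comparison angle $\tilde\ma_x(a,y)>0$ at $x$ in that triangle; since $X$ has upper curvature bound, the genuine angle satisfies $\ma_x(\alpha_-,\beta)\ge \tilde\ma_x(a,y)>0$ (the upper-bound analogue of Theorem \ref{thm: equivalent curv bounds}(i), valid because we assumed the angle exists). By Proposition \ref{pop: equal angles along geodesic} — whose upper-bound version applies here — $\ma_x(\alpha_+,\beta)=\ma_x(\alpha_-,\beta)>0$ as well, so the genuine angle at $x$ in $\Delta(x,y,b)$ is positive, whence (using the upper-bound angle comparison in the reverse direction: a positive genuine angle forces a positive comparison angle) $\Delta(x,y,b)$ is non-degenerate. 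The statement about $\Delta(a,y,b)$ would then follow by concatenating: $\alpha$ restricted to $[0,1]$ is a geodesic from $a$ to $b$ through the intermediate point, and the Gluing Lemma / additivity of the strict triangle inequality along $\alpha$ shows $\tau(a,b)=\tau(a,x)+\tau(x,b)>\tau(a,x)+\tau(x,y)+\tau(y,b)\ge\tau(a,y)+\tau(y,b)$ once one of the sub-triangles is strict — so it suffices that, say, $\Delta(x,y,b)$ is non-degenerate, which we just established. Finally, the emphasis that in case (ii) the two angles $\ma_x(\alpha_\pm,\beta)$ need \emph{not} be equal reflects that Proposition \ref{pop: equal angles along geodesic} is stated for lower bounds; for upper bounds only the two \emph{comparison} angles along $\alpha$ agree automatically, not the genuine ones, so the "equal" clause is dropped.

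The main obstacle I expect is pinning down the precise equivalence "a timelike triangle is non-degenerate $\iff$ (all / some) of its comparison angles at the vertices are strictly positive", and in which model-space direction the relevant angle-comparison inequality points in the upper-bound case (the excerpt only states the lower-bound versions of Theorems \ref{thm: equivalent curv bounds} and \ref{thm: monotonicity} and of Proposition \ref{pop: equal angles along geodesic}, so one must invoke the remark that these have upper-bound analogues with inequalities reversed). Getting the signs $\sigma$ right in $\tilde\ma_x^{K,S}$ for the past/future split at $x$ — and checking that "non-degenerate" propagates through the Gluing Lemma for the $\Delta(a,y,b)$ claim — are the other delicate bookkeeping points.
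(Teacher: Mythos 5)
Your treatment of part (i) is essentially the paper's argument (the paper runs the two angle-comparison inequalities directly rather than by contradiction, but the logic is the same): non-degeneracy of $\Delta(x,y,b)$ gives $\tilde\ma_x(y,b)>0$, angle comparison for lower bounds gives $\ma_x(\alpha_+,\beta)\geq\tilde\ma_x(y,b)>0$, Proposition~\ref{pop: equal angles along geodesic} transfers this to $\ma_x(\alpha_-,\beta)$, and a second application of angle comparison gives $\tilde\ma_x(a,y)\geq\ma_x(\alpha_-,\beta)>0$, hence $\Delta(a,x,y)$ is non-degenerate. The only glitch there is cosmetic: both $\alpha_+$ and $\beta$ are future-directed from $x$, so the sign $-1$ comes from \emph{same}, not opposite, time orientation.

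Part (ii) has two genuine gaps. First, you invoke an ``upper-bound version'' of Proposition~\ref{pop: equal angles along geodesic} to conclude $\ma_x(\alpha_+,\beta)=\ma_x(\alpha_-,\beta)$, and then your closing paragraph correctly observes that no such equality is available for upper bounds — this is precisely why the lemma says the two angles ``need not be equal.'' You cannot both use the equality as a step and retract it as a remark; as written, the derivation of $\ma_x(\alpha_+,\beta)>0$ and hence of the non-degeneracy of $\Delta(x,y,b)$ is unsupported. Second, your chain $\tau(a,b)=\tau(a,x)+\tau(x,b)>\tau(a,x)+\tau(x,y)+\tau(y,b)\geq\tau(a,y)+\tau(y,b)$ has its last inequality pointing the wrong way: the reverse triangle inequality (indeed strictly, by the \emph{hypothesis} that $\Delta(a,x,y)$ is non-degenerate) gives $\tau(a,y)>\tau(a,x)+\tau(x,y)$, so $\tau(a,x)+\tau(x,y)+\tau(y,b)<\tau(a,y)+\tau(y,b)$, and the chain does not reach the claimed conclusion. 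In fact the logical dependency runs the other way: once $\Delta(a,y,b)$ is known non-degenerate, $\tau(a,x)+\tau(x,b)=\tau(a,b)>\tau(a,y)+\tau(y,b)\geq\tau(a,x)+\tau(x,y)+\tau(y,b)$ yields $\tau(x,b)>\tau(x,y)+\tau(y,b)$, i.e.\ $\Delta(x,y,b)$ non-degenerate. The paper therefore attacks $\Delta(a,y,b)$ \emph{first}, using $K$-monotonicity comparison (for upper bounds) at the vertex $a$: the comparison angle $\tilde\ma_a(\alpha(s),\gamma_{ay}(t))$ is increasing, so $\tilde\ma_a(y,b)\geq\tilde\ma_a(x,y)>0$, whence $\Delta(a,y,b)$ is non-degenerate (given size bounds), and $\Delta(x,y,b)$ then follows by the reverse-triangle computation above. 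To fix your write-up you would need to reverse the order in which the two larger triangles are handled and replace the ``equal angles'' step with a monotonicity argument at $a$ (or the triangle inequality for angles cited in the paper's proof).
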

\begin{proof}
\emph{(i)} As $\Delta(x,y,z)$ is non-degenerate, also the corresponding comparison triangle is non-degenerate and hence $\tilde{\ma}_x(y,z)>0$. By angle comparison, cf.\ Theorem \ref{thm: equivalent curv bounds}.\emph{(i)}, we get 
$0<\tilde{\ma}_x(y,z) \leq \ma_x(\alpha_+, \beta)$. As $X$ is locally causally closed, strongly causal and has timelike curvature bounded below, we can apply Proposition \ref{pop: equal angles along geodesic}, from which it follows that $\ma_x(\alpha_+,\beta)=\ma_x(\alpha_-,\beta)>0$, and again by angle comparison, we have 
$\tilde{\ma}_x(a,y)\geq\ma_x(\alpha_-,\beta)>0$. In particular, also 
$\Delta(a,x,y)$ is non-degenerate. 

\emph{(ii)} For the curvature bounded above case, the arguments of the curvature bounded below case reverse (we only get $\tilde{\ma}_x(a,y) \leq \ma_x(\alpha_-, \beta)$ from the triangle inequality of angles, see \citet*[Theorem 4.5.(i)]{BS22}), and monotonicity comparison at the angle at $a$ yields the statement on the big triangle.
\end{proof}

The result of Theorem \ref{thm: lor meyers} should be closely compared to \citet*[Proposition 5.10]{CM20}. In this pioneering work, the authors introduce synthetic Ricci curvature bounds using optimal transport methods, hence their result might be even closer in spirit to the original Bonnet--Myers Theorem than the one shown below. 
See also \citet*{BC23} for the corresponding result in the setting of spacetimes with low regularity. 
Moreover, should it prove true that Ricci curvature bounds (using optimal transport) are weaker than sectional curvature bounds (using triangle comparison) in the Lorentzian picture, as is the case for metric curvature comparison, cf. \citet*{Pet19}, then our result has narrower scope. However, as the hierarchy of curvature bounds is not yet known and our method is distinct, the proof of Theorem \ref{thm: lor meyers} is valuable in its own right.
\medskip

In the statement of Theorem \ref{thm: lor meyers}, we impose an additional non-degeneracy condition on the space $X$: for each pair of points $x \ll z$ in $X$, there exists a $y \in X$ such that $\Delta(x,y,z)$ is a non-degenerate timelike triangle. This allows us to apply Lemma \ref{lem: non-deg-cond} along timelike geodesics and ensures that the space is not locally one-dimensional; compare this to the class of one-dimensional spaces which the Bonnet--Myers theorem does not apply to in the metric setting, as discussed before Theorem \ref{thm: Toponogov}. Upon extending the following result to \LpLSs with local curvature bounded below by $K<0$, it may be possible to classify such one-dimensional spaces by following the approach of \citet*[Proposition 8.44]{AKP19} and this could be explored in future work.

\begin{thm}[Bound on the finite diameter]
\label{thm: lor meyers}
Let $X$ be a strongly causal, locally causally closed, regular, and geodesic\newfoot{Global curvature bounds guarantee the existence of a geodesic for all $a\ll b$ with $\tau(a,b)<D_K$. In this context, however, we will need the existence of geodesics for all timelike related pairs of points slightly further apart. $X$ being geodesic is a sufficient condition for this.} Lorentzian pre-length space which has global curvature bounded below by $K<0$. Assume that, for each pair of points $x\ll z$ in $X$, there exists $y \in X$ such that $\Delta(x,y,z)$ is a non-degenerate timelike triangle.
Then $\diam_{\mathrm{fin}}(X)\leq D_K$.
\end{thm}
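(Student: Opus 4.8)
The plan is to argue by contradiction: assuming a timelike geodesic longer than $D_K$ exists, I want to build — entirely inside the ``admissible band'' $\tau<D_K$, where triangle, angle and hinge comparison from Theorems~\ref{thm: monotonicity} and~\ref{thm: equivalent curv bounds} apply (globally, since $X$ is its own $(\ge K)$-comparison neighbourhood) — a configuration that transports into $\lm{K}$ as a timelike geodesic of length $>D_K$, which is impossible since $\diamfin(\lm{K})=D_K$.

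\paragraph{Setup and reduction.} Suppose $\diamfin(X)>D_K$; then there are $x\ll z$ with $D_K<\tau(x,z)<\infty$, and by regularity and geodesy a timelike distance realizer $\gamma\colon[0,1]\to X$ from $x$ to $z$, which has constant speed, so $\tau(x,\gamma(t))=t\,\tau(x,z)$. Since restrictions of geodesics are geodesics, after replacing $x,z$ by $\gamma(a),\gamma(b)$ I may assume $D_K<\tau(x,z)<2D_K$. Fix a small $\varepsilon>0$ with $\varepsilon<2D_K-\tau(x,z)$, let $m=\gamma(s_0)$ be the point with $\tau(x,m)=D_K-\varepsilon$, and set $c:=\tau(m,z)=\tau(x,z)-D_K+\varepsilon\in(0,D_K)$; also pick $m'=\gamma(s_1)$, $s_1>s_0$, with $h:=\tau(m,m')<\varepsilon$, so $\tau(x,m')=D_K-\varepsilon+h<D_K$.

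\paragraph{Producing a positive, two-sided angle at $m$.} Apply the non-degeneracy hypothesis to the pair $m\ll m'$ to obtain $y$ with $m\ll y\ll m'$ and $\Delta(m,y,m')$ non-degenerate; it is size-bounded as $\tau(m,m')=h<D_K$. The point $m$ lies on the geodesic $\gamma|_{[0,s_1]}$ from $x$ to $m'$, and $\Delta(x,m,y)$ is size-bounded since $x\ll m\ll y\ll m'$ forces $\tau(x,y)\le\tau(x,m')<D_K$; hence Lemma~\ref{lem: non-deg-cond}(i) (with $a=x$, $b=m'$, middle point $m$) yields that $\Delta(x,m,y)$ is non-degenerate and that $\psi:=\ma_m(\gamma_-,\gamma_{my})=\ma_m(\gamma_+,\gamma_{my})>0$, where $\gamma_\mp$ are the past/future branches of $\gamma$ at $m$ and $\gamma_{my}$ the geodesic from $m$ to $y$ (the equality of the two angles is Proposition~\ref{pop: equal angles along geodesic}); write $b:=\tau(m,y)<h$. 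Thus the geodesic $\gamma_{my}$ makes the \emph{same} positive opening $\psi$ with the two halves of $\gamma$ at $m$. I would then transport each side to $\lm{K}$: by hinge comparison, Theorem~\ref{thm: equivalent curv bounds}(ii), the hinge $(\gamma_-,\gamma_{my})$ at $m$ (legs $D_K-\varepsilon$ and $b$, opening $\psi$, sign $+1$) and the hinge $(\gamma_+,\gamma_{my})$ at $m$ (legs $c$ and $b$, opening $\psi$, sign $-1$) admit $K$-comparison hinges, which may be placed in a common copy of $\lm{K}$ sharing the vertex $\bar m$ and the leg $[\bar m\bar y^\flat]$ of length $b$; because $\gamma_+$ is literally the reversal of $\gamma_-$ at $m$ and the opening is the same $\psi$ on both sides, the far legs $[\bar m\bar x^\flat]$ (past, length $D_K-\varepsilon$) and $[\bar m\bar z^\flat]$ (future, length $c$) must be the two branches of a single timelike geodesic through $\bar m$. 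Hence $\bar x^\flat\ll\bar m\ll\bar z^\flat$ lie on a timelike geodesic of length $(D_K-\varepsilon)+c=\tau(x,z)>D_K$, so $\bar\tau(\bar x^\flat,\bar z^\flat)\ge\tau(x,z)>D_K$; on the other hand the comparison inequalities $\tau(x,y)\ge\bar\tau(\bar x^\flat,\bar y^\flat)$ and $\tau(y,z)\ge\bar\tau(\bar y^\flat,\bar z^\flat)$, with $\tau(x,y),\tau(y,z)<D_K$ finite, together with the reverse triangle inequality in $\lm{K}$ along this geodesic, pin $\bar\tau(\bar x^\flat,\bar z^\flat)$ to a finite value, contradicting $\diamfin(\lm{K})=D_K$. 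Therefore $\tau(x,z)\le D_K$ for all timelike related $x,z$ with finite separation, i.e.\ $\diamfin(X)\le D_K$.

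\paragraph{Main obstacle.} The delicate step is the last one, making the $\lm{K}$-configuration genuinely contradictory. Unlike the upper-bound (Patchwork) case there is no gluing lemma for lower curvature bounds, so the contradiction has to be squeezed out of hinge/angle (and monotonicity) comparison together with the exotic near-diameter behaviour of (rescaled) anti-deSitter space — concretely, one has to keep every auxiliary triangle size-bounded (which is exactly why the whole construction is confined to $\tau<D_K$ and uses the ``equal angle along $\gamma$'' device of Proposition~\ref{pop: equal angles along geodesic} to carry information from the near-maximal past part of $\gamma$ to its future continuation without ever forming the oversized triangle $\Delta(x,\,\cdot\,,z)$), and then carry out the two-dimensional AdS trigonometry showing the glued hinges cannot coexist once $\tau(x,z)>D_K$. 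The non-degeneracy hypothesis is used precisely to guarantee, via Lemma~\ref{lem: non-deg-cond}, the strict positivity of $\psi$ that forces the comparison configuration to ``bend'' — without it one could not even speak of the relevant hinges, and indeed $1$-dimensional examples (for which the hypothesis fails) show the bound is false in general.
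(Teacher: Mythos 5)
The high-level strategy matches the paper quite closely: replace an overlong geodesic by one of length just over $D_K$, pick a point on it, use the non-degeneracy hypothesis together with Lemma~\ref{lem: non-deg-cond}(i) and Proposition~\ref{pop: equal angles along geodesic} to produce a strictly positive angle that is \emph{equal} on both sides, and transport two hinges with this common opening into a single copy of $\lm{K}$ where the far legs are collinear. All of that is sound, and you correctly identify the comparison-theoretic tools the paper also relies on. However, there is a genuine gap in the final step, which is precisely where the actual work lies.

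Your proposed contradiction is that the comparison configuration contains a timelike geodesic of arclength $\tau(x,z) > D_K$, and that this forces a \emph{finite} value $\bar\tau(\bx^\flat,\bz^\flat) > D_K$ in $\lm{K}$, contradicting $\diamfin(\lm{K})=D_K$. This does not work. In $\lm{K}$ with $K<0$, timelike geodesics simply cease to be maximizing once their arclength exceeds $D_K$; between two points joined by such an overlong geodesic one has $\bar\tau=\infty$, exactly the pathology the paper highlights when motivating $\diamfin$. Since $\diamfin$ \emph{excludes} $\infty$ by definition, there is no contradiction with $\diamfin(\lm{K})=D_K$. Moreover, the hinge-comparison inequalities $\tau(x,y)\ge\bar\tau(\bx^\flat,\by^\flat)$ and $\tau(y,z)\ge\bar\tau(\by^\flat,\bz^\flat)$ only control the \emph{short} sides; the reverse triangle inequality in $\lm{K}$ gives a lower bound for $\bar\tau(\bx^\flat,\bz^\flat)$, not an upper bound, so it cannot ``pin'' it to a finite value. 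You acknowledge this obstacle yourself, but the sketch you offer to close it is not just incomplete — it points in the wrong direction.

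The paper's contradiction is instead derived in $X$: one shows that the chain through $y$ is \emph{too long}, i.e.\ $\tau(a,y)+\tau(y,b) > \tau(a,b)$, violating the reverse triangle inequality in $X$ itself. To make the required model-space trigonometry tractable, the paper chooses the split point $x=\alpha(t_-)$ to be the \emph{midpoint} of $\alpha$ (and a second auxiliary point $z=\alpha(t_+)$ only to control the auxiliary side lengths). With $t=\tau(a,x)=\tau(x,b)$, the two law-of-cosines relations
\begin{align*}
\cos(m)\cos(t)-\sin(m)\sin(t)\cosh(\omega)&=\cos(\tilde p),\\
\cos(m)\cos(t)+\sin(m)\sin(t)\cosh(\omega)&=\cos(\tilde q),
\end{align*}
add up so that the $\cosh(\omega)$ terms cancel, leaving $\cos(m)\cos(t)=\cos\bigl(\tfrac{\tilde p+\tilde q}{2}\bigr)\cos\bigl(\tfrac{\tilde p-\tilde q}{2}\bigr)$. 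The elementary estimates $\cos(t)<0$ (since $t\in(\pi/2,\pi)$) and $0<\cos\bigl(\tfrac{\tilde p-\tilde q}{2}\bigr)<\cos(m)$ then yield $\tilde p+\tilde q>2t=\tau(a,b)$, and hinge comparison lifts this to $p+q>\tau(a,b)$ in $X$. Your choice of split point at $\tau$-distance $D_K-\varepsilon$ from $x$ breaks the symmetry that produces this cancellation; the sum of the two law-of-cosines relations retains a $\cosh(\omega)$ term of unknown magnitude, so the clean estimate is lost. In short: correct setup and correct auxiliary tools, but the missing AdS trigonometry is the heart of the proof, the balanced midpoint is the device that makes it work, and the fallback contradiction you propose is incompatible with how $\diamfin$ treats the value $\infty$ in $\lm{K}$.
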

\begin{proof}
Without loss of generality, we only consider $K=-1$. 
Let indirectly $a,b\in X$ with $\tau(a,b)=\pi+\varepsilon$ for some small enough $\varepsilon>0$, and let $\alpha: [0, \pi + \varepsilon] \to X$ be a timelike geodesic from $a$ to $b$ parameterized by $\tau$-arclength. 
Let 
$x=\alpha(t_-)$ and $z=\alpha(t_+)$
for $t_-=\frac{\pi}{2}+\frac{\varepsilon}{2}$ (i.e., the midpoint), $t_+=\frac{\pi}{2}+\frac{\pi}{8}$. 
Note that the specific value of $t_+$ is not important, any $t_+ \in (t_-,\pi)$ suffices. The corresponding point $z$ is mainly used further on in the proof to ensure that a triangle with longest side shorter than $\tau(a,z) < \pi$, is realizable in $\lm{-1}$.
Denote by $\alpha_-=\alpha|_{[0,t_-]}$ and $\alpha_+=\alpha|_{[t_-,\pi + \varepsilon]}$ the parts of $\alpha$ in the past and future of $x$, respectively.

By the non-degeneracy assumption on $X$, we find a point 
$y \in I(x,z)$
such that 
$\tau(x,z) > \tau(x,y) + \tau(y,z)$, i.e., $\Delta(x,y,z)$ is non-degenerate.
Let $\beta$ be a geodesic from $x$ to $y$.
By Lemma \ref{lem: non-deg-cond}.\emph{(i)}, we get that  
$\Delta(a,x,y)$ is non-degenerate and $\omega \coloneqq \ma_x(\alpha_-,\beta)=\ma_x(\alpha_+,\beta)>0$. 
We now claim that $\tau(a,b) < \tau(a,y) + \tau(y,b)$, contradicting the reverse triangle inequality. 

We name the lengths: 
$t_-=\tau(a,x)=\tau(x,b)\eqqcolon t$, $\tau(a,y) \eqqcolon p$, $\tau(y,b)\eqqcolon q$ and $\tau(x,y) \eqqcolon m$, so the claim reads 
\begin{equation}
    \label{eq: claim bonnet}
    2t<p+q. 
\end{equation}
 We create a situation in $\lm{K}$ consisting of comparison hinges for $(\alpha_-,\beta)$ and $(\beta,\alpha_+)$, giving the triangles $\Delta(\tilde a,\tilde x,\tilde y)$ and $\Delta(\tilde x,\tilde y,\tilde b)$, see Figure \ref{fig: bonnet comp sit2}. Note that these triangles are non-degenerate since $\omega >0$. 
We name the side-lengths $\tau(\tilde a,\tilde y)=\tilde p$, $\tau(\tilde y,\tilde b)=\tilde q$. 

\begin{figure}[ht]
\begin{center}
\begin{tikzpicture}
\begin{scriptsize}
\draw (-4,0) .. controls (-4.5,1) and (-3.7,3) .. (-4,4) node (A)[circle, fill=black,inner sep=0.5pt,pos=0.4,label=left:$x$]{};
\end{scriptsize}
\draw (A) .. controls (-3.5,3.3) and (-3.1, 3.2) .. (-3,3.2);
\draw (-3,3.2) .. controls (-3.1,3.5) and (-3.2, 3.6) .. (-4,4);
\draw (-4,0) .. controls (-3.1,1) and (-3.3,2.5) .. (-3,3.2);

\coordinate (y1) at (-3.455,3.2);
\coordinate (a1) at (-4.23,0);
\coordinate (b1) at (-3.849,4);

\draw (1,0) -- (1,4);
\draw[dashed] (1.5,3.4) -- (1,4);
\draw (1,1.7) -- (1.5,3.4);
\draw [dashed] (1.5,3.4) -- (1,0);

\begin{scriptsize}
\coordinate [circle, fill=black, inner sep=0.5pt, label=270: {$a$}] (a) at (-4,0);
\coordinate [circle, fill=black, inner sep=0.5pt, label=90: {$b$}] (b) at (-4,4);
\coordinate [circle, fill=black, inner sep=0.5pt, label=0: {$y$}] (y) at (-3,3.2);
\coordinate [] (y0) at (A);

\coordinate [circle, fill=black, inner sep=0.5pt, label=270: {$\tilde a$}] (ba) at (1,0);
\coordinate [circle, fill=black, inner sep=0.5pt, label=90: {$\tilde b$}] (bb) at (1,4);
\coordinate [circle, fill=black, inner sep=0.5pt, label=180: {$\tilde x$}] (bx) at (1,1.7);
\coordinate [circle, fill=black, inner sep=0.5pt, label=0: {$\tilde y$}] (by) at (1.5,3.4);

\pic [draw, ->, "${\omega}$", angle radius=9mm, angle eccentricity=1.5] {angle = by--bx--bb};
\pic [draw, ->, "${\omega}$", angle eccentricity=0.7] {angle = ba--bx--by};

\pic [draw, ->, "${\omega}$", angle radius=9mm, angle eccentricity=1.5] {angle = y1--A--b1};
\pic [draw, ->, "${\omega}$", angle eccentricity=0.7] {angle = a1--A--y1};
\end{scriptsize}
\end{tikzpicture}
\caption{The construction in $X$ and comparison hinges.}
\label{fig: bonnet comp sit2}
\end{center}
\end{figure}
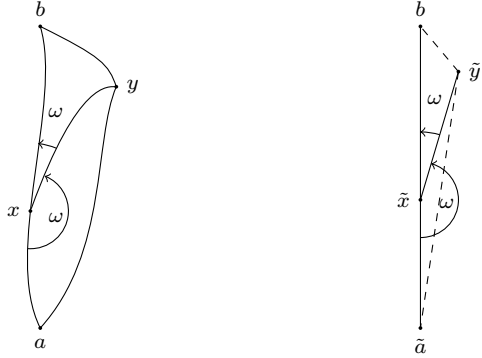

By hinge comparison, cf.\ Theorem \ref{thm: equivalent curv bounds}.(ii), we get 
$p=\tau(a,x) \geq \tau(\tilde{a},\tilde{x})=\tilde{p}$ and $q=\tau(y,b) \geq \tau(\tilde{y},\tilde{b})=\tilde{q}$.
We claim that $2t<\tilde{p}+\tilde{q}$. As $p\geq \tilde{p}$ and $q\geq \tilde{q}$, this implies the above claim.

By the reverse triangle inequality, we have $t > m+\tilde{q}$ and $\tilde{p} > t+m > 2m+\tilde{q}$, thus $\tilde{p}-\tilde{q} > 2m$. 
Note that the reverse triangle inequality yields strict inequalities since the triangles are non-degenerate.
Recall that $\tilde p \leq p = \tau(a,y) \leq \tau(a,z) = \frac{\pi}{2}+\frac{\pi}{8}$ and since $\tilde{q} \geq 0$, it follows that
\begin{equation}
0 < 2m < \tilde{p} - \tilde{q} \leq \frac{\pi}{2} + \frac{\pi}{8} < \pi\,.
\end{equation}
In particular, as cosine is decreasing, we have 

\begin{equation}
    \label{eq: cos inequality}
    0<\cos(\frac{\tilde{p}-\tilde{q}}{2})< \cos(m). 
\end{equation}

We now write down the equations for $\omega=\ma_{\tilde{x}}(\tilde{a},\tilde{y})=\ma_{\tilde{x}}(\tilde{y},\tilde{b})$ in the law of cosines (cf.\ \citet*[Lemma 2.4]{BS22} and remember $K=-1$):

\begin{align*}
\cos(m)\cos(t)-\sin(m)\sin(t)\cosh(\omega)&=\cos(\tilde{p}),\\
\cos(m)\cos(t)+\sin(m)\sin(t)\cosh(\omega)&=\cos(\tilde{q}).
\end{align*}
We add these two equations to eliminate $\omega$ and then use the cosine addition formula:
\begin{equation}
\cos(m)\cos(t) = \frac{1}{2}( \cos(\tilde{p})+\cos(\tilde{q}))=\cos(\frac{\tilde{p}+\tilde{q}}{2})\cos(\frac{\tilde{p}-\tilde{q}}{2}) .
\end{equation}

Rearranging further, we find
\begin{equation}
\cos(t)\frac{\cos(m)}{\cos(\frac{\tilde{p}-\tilde{q}}{2})}=\cos(\frac{\tilde{p}+\tilde{q}}{2}).
\end{equation}
As $0<\cos(\frac{\tilde{p}-\tilde{q}}{2})< \cos(m)$, we know that the fraction on the left hand side is bigger than one, and since $\frac{\pi}{2}<t=\tau(a,x)=\tau(x,b)<\pi$, we have $\cos(t)<0$. In total, we get
\begin{equation}
\cos(\frac{\tilde{p}+\tilde{q}}{2})<\cos(t),
\end{equation}
and as $\cos$ is monotonically decreasing, we obtain 
\begin{equation}
\tilde{p}+\tilde{q}>2t,
\end{equation}
which by above arguments implies the original claim \eqref{eq: claim bonnet}.
\end{proof}

\begin{rem}[Global hyperbolicity and spacetimes]
If we additionally assume that $X$ is a globally hyperbolic Lorentzian length space, then the result can be extended to apply to the diameter, in addition to the finite diameter. Indeed, $\tau$ is then finite, cf.\ \citet*[Theorem 3.28]{KS18}, so the diameter and the finite diameter agree. Our result may then be viewed as an extension of \citet*[Theorem 9.5]{BE79}, in which a bound is derived for the diameter of a globally hyperbolic spacetime with timelike sectional curvature bounded below by $K<0$. 
\end{rem}

\begin{rem}[An implication of Theorem \ref{thm: lor meyers}]
There is an immediate corollary to the metric version of Theorem \ref{thm: meyers}, which states that the perimeter of any triangle in a space with curvature bounded below by $k$ cannot be greater than $\frac{2\pi}{\sqrt{k}}$, see \citet*[Corollary~10.4.2]{BBI01}. This can be argued using hinge comparison. Typically, deriving Lorentzian analogues of metric results is at least as difficult as the original metric derivation, so it is noteworthy that this corollary is easier to show in the Lorentzian world. In fact, the result immediately follows from the reverse triangle inequality: let $\Delta(x,y,z)$ be a timelike triangle, then by Theorem \ref{thm: lor meyers}, we know $D_K > \tau(x,z) \geq \tau(x,y) + \tau(y,z)$, hence $\tau(x,y) + \tau(y,z) + \tau(x,z) < 2 D_K$ as required. 
\end{rem}

\section{Outlook}\label{sec:concOutlook}

Finally, let us discuss potential future research stemming from this paper. 
In particular, we wish to highlight the existence of a Lorentzian version of the famous Toponogov Globalization Theorem (Theorem \ref{thm: Toponogov}) for lower curvature bounds. 
In the smooth Lorentzian case, this was achieved by \citet*{Har82}. 
A corresponding result in the synthetic Lorentzian setting is presented in the upcoming work by \citet*{BHNR23}, where curvature bounds in the sense of angle comparison and the cat's cradle method of \citet*{LS13} are used. 
The assumptions used by \citet*[Theorem~3.6]{BHNR23} are not especially mild in comparison to the metric setting, where \citet*{BGP92} prove the Toponogov Globalisation Theorem for complete spaces and \citet*{Pet16} does so for non-complete geodesic spaces. 
In \citet*[Section~3.4]{BGP92}, the Toponogov Globalization Theorem for complete length spaces is formulated in terms of the so-called four point condition (see \citet*[Proposition 10.1.1]{BBI01}). An analogous condition has been presented for (lower) curvature bounds in Lorentzian pre-length spaces by \citet*[Definition 4.6]{BKR23}, which might yield an alternative way of obtaining the Lorentzian version of the Toponogov Theorem, with milder assumptions. Regardless, the four point condition will likely be a strong tool in the arsenal of synthetic Lorentzian geometry. 
\bigskip

One immediate benefit of obtaining a Lorentzian Toponogov Globalization Theorem is the extension of Theorem \ref{thm: lor meyers} to spaces with local timelike curvature bounded below by $K<0$ in the spirit of the metric Bonnet--Myers Theorem \ref{thm: meyers}, as opposed to only those with global bounds. A generalization in this direction is discussed in detail in \citet*[Theorem~4.3]{BHNR23} and utilizes the Lorentzian Toponogov Theorem found therein. 
A rigidity statement for this theorem has been developed in \citet*{Ber24}, where the assumption of a geodesic of maximal length forces the space to behave like a warped product. 
As for the Toponogov-style globalization theorem, it may be possible to relax the assumptions required to obtain a bound on the finite diameter of a Lorentzian pre-length space with local timelike curvature bounded below by $K<0$, either by using the four point condition or by taking a more direct approach.
\bigskip

Another area of interest is the study of polyhedral spaces, in particular those which describe graphs. By viewing graphs as metric spaces, we may impose curvature bounds on them in order to obtain information about their structure. For example, \citet*[Section 4.2]{BBI01} note that all locally finite, connected graphs are non-positively curved, that is, they have curvature bounded above by $k\leq 0$. Furthermore, \citet*[Theorems II.5.4, II.5.5]{BH99} demonstrate that a graph $X$ has local/global curvature bounded above by $k$ if and only if $X$ is locally/globally uniquely geodesic up to a distance $\diam(M_k)$, providing a drastic simplification of the Globalization Theorem \ref{thm: alex patch} in this setting. It follows that if a graph has global curvature bounded above by $k \leq 0$, then the graph is a tree. Furthermore, \citet*[Chapter II.5]{BH99} show that a weighted graph is CAT($k$) if and only if every locally injective loop in the graph has length greater than $\diam(M_k)$, from which it follows that weighted trees are CAT($k$) for any $k$. For more results in this vein, see \citet*{Dav24}. 
Turning to the synthetic Lorentzian setting, note that any graph which possesses vertices of valency $>2$ exhibits branching curves given by the edges adjacent to said vertices. Consequently, graphs consisting of vertices and edges cannot be made a \LpLSn, as they violate the lower semi-continuity of the time separation function, cf.\ \citet*[Lemma 2.12]{KS18}. Since the edges are the obstacle here, the natural choice is to exclude them and instead only consider the point-cloud given by the vertices, namely a \emph{causal set}.
\medskip

Causal sets are sets equipped with locally finite partial order, which are used in the theory of quantum gravity to model discrete spacetimes, see the work of Bombelli, Dowker, Henson, Lee, Meyer, and Sorkin \cite{BLMS87,DHS04}. They may also be represented by locally finite, transitively reduced, directed, acyclic graphs called Hasse diagrams, where the additional qualifiers reflect the properties of the inferred Lorentzian geometry. 
However, the edges of these diagrams are merely indicators of the partial ordering and should not be considered part of the set itself. 
\citet*[Section 5.3]{KS18} show that, similarly to how graphs can be given metric space structure, causal sets can be given the structure of a Lorentzian pre-length space, though not a Lorentzian length space. Given the interplay between upper curvature bounds and graph structure, it is expected that global upper curvature bounds would impose significant restrictions on the structure of causal sets. 
It is also expected that causal sets possess non-negative curvature (perhaps under a constraint on the finiteness of $\tau$, in the spirit of connectedness). 
\bigskip

Another open question, initially posed by \citet*[Section 5.3]{KS18}, asks if it is possible to discretize Lorentzian length spaces, and by extension spacetimes, using causal sets. In metric geometry, it is known that compact length spaces are given by the Gromov--Hausdorff limit of finite graphs, see \citet*[Proposition 7.5.5]{BBI01}, which amounts to a discretization of a continuous space, so it is not too far-fetched to presume that a similar result would hold in Lorentzian signature. Indeed, recall that a \emph{bounded Lorentzian metric space} is a set $X$ equipped with a function $\tau : X\times X \rightarrow [0,\infty)$ which satisfies the reverse triangle inequality, distinguishes points, and is continuous in a topology where $\Set*{ (p,q) \given \tau(p,q) \geq \epsilon }$ is compact for all $\epsilon >0$. \citet*[Proposition 2.3, Corollary 4.32]{MS23} show that the finite bounded Lorentzian metric spaces are precisely the causal sets and every (not necessarily finite) bounded Lorentzian metric space is the limit of causal sets. Furthermore, later work by \citet*[Lemma 4.1]{BHNR23} shows that causal diamonds (modulo their spacelike boundaries), in globally hyperbolic, regular Lorentzian length spaces, are bounded Lorentzian metric spaces. It follows that a globally hyperbolic, regular Lorentzian length space can be described locally by the limit of causal sets, though it is not clear whether this statement is true globally. In particular, gluing the local causal sets together may not result in a causal set; we need to ensure that the result is still locally finite and acyclic.\newfoot{Recall that a partial order is antisymmetric and transitive.}
\medskip

Conversely, it is known that not all causal sets have continuum approximation given by spacetimes, see the excellent review paper by \citet*{Sur19} for more details. However, it is plausible that a wider range of causal sets have continuum approximation given by sequences of (non-finite, non-discrete) Lorentzian pre-length spaces. The Morse spacetimes of Borde and Sorkin are another candidate class of spaces whose limit could reasonably yield a causal set, since they admit singular structures such as causal discontinuities and causal bubbling, which otherwise arise due to topological changes when taking the limits of spacetimes. See \citet*{Gar23} and \citet*{DJ24} for recent results concerning the structure of Morse spacetimes. 
Strengthening these correspondences would drive the development of the Lorentzian pre-length space, causal set, and Morse spacetime frameworks forward in parallel, enabling consolidation and verification of the results derived in each picture.
\medskip

Of course, Gromov--Hausdorff convergence of spaces is an interesting topic in its own right, in particular due to the stability of curvature bounds under Gromov--Hausdorff limits in the metric setting, see \citet*{Kap02}. A schema for an analogous Lorentzian result can be found in \citet*[Section 4.1]{BHNR23} and convergence of warped product Lorentzian length spaces was studied in \citet*{KS21}, though the precise statement is still an open problem.

\addcontentsline{toc}{section}{References}
\bibliography{references}

\end{document}